\newcommand{\AC}{AC}
\newcommand{\BV}{{BV}}
\newcommand{\CTPP}{\hbox{$CT\kern-0.2ex{P}\kern-0.2ex{P}$}}
\newcommand{\Pol}{\mathcal{P}}
\newcommand{\mN}{\mathbb{N}}
\newcommand{\mR}{\mathbb{R}}
\newcommand{\mC}{\mathbb{C}}
\newcommand{\abs}[1]{\left\lvert#1\right\rvert}
\newcommand{\veca}{{\boldsymbol{a}}}
\newcommand{\vecb}{{\boldsymbol{b}}}
\newcommand{\vecc}{{\boldsymbol{c}}}
\newcommand{\vecm}{{\boldsymbol{m}}}
\newcommand{\vecp}{{\boldsymbol{p}}}
\newcommand{\vecx}{{\boldsymbol{x}}}
\newcommand{\vecy}{{\boldsymbol{y}}}
\newcommand{\vecu}{{\boldsymbol{u}}}
\newcommand{\vecv}{{\boldsymbol{v}}}
\newcommand{\vecw}{{\boldsymbol{w}}}
\newcommand{\vecz}{{\boldsymbol{z}}}
\newcommand{\norm}[1]{\left\lVert#1\right\rVert}
\newcommand{\normbv}[1]{\left\lVert#1\right\rVert_{\BV(\sigma)}}
\newcommand\st{\thinspace : \thinspace}
\def\ls[#1,#2]{\overline{\vphantom{\vbox to 1.2 ex{}} #1\, #2}}
\def\lso[#1,#2]{\overline{\vphantom{\vbox to 1.2 ex{}} #1\, #2}^\circ}
\def\snorm#1{\Bigl \Vert #1 \Bigr \Vert}
\def\ssnorm#1{\Vert #1 \Vert}
\def\sparen(#1){\Bigl ( #1 \Bigr )}
\def\ssparen(#1){ (#1) }
\newcommand\plist[1]{\bigl[ #1 \bigr]}
\newcommand{\interior}[1]{\mathop{\mathrm{int}}(#1)}
\DeclareMathOperator{\var}{var}
\DeclareMathOperator{\cvar}{\rm cvar}
\DeclareMathOperator{\vf}{vf}
 \newtheorem{thm}{Theorem}[section]
 \newtheorem{cor}[thm]{Corollary}
 \newtheorem{lem}[thm]{Lemma}
 \theoremstyle{definition}
 \newtheorem{defn}[thm]{Definition}
 \theoremstyle{remark}
 \newtheorem{rem}[thm]{Remark}
 \newtheorem{ex}[thm]{Example}
 \numberwithin{equation}{section}
\newcommand{\booktitle}[1]{\textit{#1}}
\newcommand{\journalname}[1]{\textrm{#1}}
\begin{document}

\title{Isomorphisms of $AC(\sigma)$ spaces}
\author{Ian Doust and Michael Leinert}

\address{Ian Doust, School of Mathematics and Statistics,
University of New South Wales,
UNSW Sydney 2052 Australia}%
\email{i.doust@unsw.edu.au}

\address{Michael Leinert, Institut f{\"u}r Angewandte Mathematik,
Universit{\"a}t Heidelberg,
Im Neuenheimer Feld 294,
D-69120 Heidelberg
Germany}
\email{leinert@math.uni-heidelberg.de}
\date{28 November 2013}
\subjclass[2010]{Primary: 46J10. Secondary: 46J35,47B40,26B30}

\begin{abstract}
Analogues of the classical Banach-Stone theorem for spaces of continuous functions are studied in the context of the spaces of absolutely continuous functions introduced by Ashton and Doust. We show that if $\AC(\sigma_1)$ is algebra isomorphic to $\AC(\sigma_2)$ then $\sigma_1$ is homeomorphic to $\sigma_2$. The converse however is false. In a positive direction we show that the converse implication does hold if the sets $\sigma_1$ and $\sigma_2$ are confined to a restricted collection of compact sets, such as the set of all simple polygons.
\end{abstract}

\maketitle

\section{Introduction}

In \cite{AD1} Ashton and Doust defined the Banach algebra $AC(\sigma)$ consisting of `absolutely continuous' functions with domain an arbitrary nonempty compact subset $\sigma$ of $\mC$ (or equivalently of $\mR^2$). The motivation for their definition was to extend the spectral theory of well-bounded operators to cover operators whose spectra need not be contained in the real line. This led to the definition of an $AC(\sigma)$ operator being a bounded operator on a Banach space $X$ which admits a bounded functional calculus $\Psi: \AC(\sigma) \to B(X)$.  Under some additional assumptions, the image of this map $\Psi$ is an algebra of operators which is isomorphic to  $\AC(\sigma)$.
Accordingly, one can recover certain aspects of the theories of normal operators and of scalar-type spectral operators, replacing algebras of continuous functions $C(\Omega)$ with algebras of absolutely continuous functions.
Quite naturally then, underlying many of the open problems in this area are questions which ask for analogues of the classical topological results about $C(\Omega)$ spaces. (Details of the theory of $AC(\sigma)$ operators can be found in \cite{AD3}.)

One of the most classical of these topological results is the Banach-Stone theorem which says that two compact Hausdorff spaces $\Omega_1$ and $\Omega_2$ are homeomorphic if and only if the function algebras $C(\Omega_1)$ and $C(\Omega_2)$ are linearly isometric. There have been many generalizations and extensions of this result (see \cite{GJ}).
Work of Amir \cite{Am} shows that one may still deduce that $\Omega_1$ and $\Omega_2$ are homeomorphic if one only assumes that $C(\Omega_1)$ and $C(\Omega_2)$ are (linearly) isomorphic with Banach-Mazur distance less than $2$.  Cohen \cite{Co} has shown that the value $2$ is sharp.

In a different direction, one  might require that the spaces $C(\Omega_1)$ and $C(\Omega_2)$ be isomorphic as algebras. In this case one may argue using the maximal ideal spaces to get the same conclusion. In particular, as the next result shows, if $C(\Omega_1)$ and $C(\Omega_2)$ are algebra isomorphic, then they are isometrically isomorphic. This result was originally proved in \cite{GK}; a modern treatment is given in \cite{GJ}.

\begin{thm}[Gelfand and Kolmogoroff 1939] Let $\Omega_1$ and $\Omega_2$ be compact
Hausdorff spaces. Then  $C(\Omega_1)$ and $C(\Omega_2)$ are isomorphic as algebras if  and only if  $\Omega_1$
and $\Omega_2$ are homeomorphic. Moreover, every algebra isomorphism $j : C(\Omega_1) \to
C(\Omega_2)$ is of the form $j(f) = f \circ h$ where $h : \Omega_1 \to \Omega_2$ is a homeomorphism.
\end{thm}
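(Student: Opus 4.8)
The plan is to recover each space $\Omega_i$ from the purely algebraic structure of $C(\Omega_i)$ as its space of characters (equivalently, its space of maximal ideals), and then to transport a homeomorphism across any given isomorphism by duality. The easy implication is immediate: if $h\colon \Omega_2 \to \Omega_1$ is a homeomorphism then $f \mapsto f \circ h$ is a unital algebra homomorphism $C(\Omega_1) \to C(\Omega_2)$ with inverse $g \mapsto g \circ h^{-1}$, hence an algebra isomorphism. The content is therefore the reverse implication together with the ``moreover'' clause, and both will follow once the identification of $\Omega$ with its character space is in place.

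First I would establish the key lemma: for a compact Hausdorff space $\Omega$, the evaluation map $\varepsilon\colon \Omega \to \Delta(C(\Omega))$, $\omega \mapsto \varepsilon_\omega$ (where $\varepsilon_\omega(f) = f(\omega)$), is a homeomorphism onto the set $\Delta(C(\Omega))$ of nonzero multiplicative linear functionals, equipped with the weak-$*$ topology. Each $\varepsilon_\omega$ is a character, and $\varepsilon$ is continuous because $\omega \mapsto f(\omega)$ is continuous for every fixed $f$. Injectivity is Urysohn's lemma: distinct points of a compact Hausdorff space are separated by some $f \in C(\Omega)$. As $\varepsilon$ is then a continuous injection from a compact space into a Hausdorff space, it is a homeomorphism onto its image, so only surjectivity remains.

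Surjectivity is the main obstacle, and it is exactly where compactness enters. Given a character $\varphi$, its kernel $\Ker \varphi$ is a maximal ideal, and I would show $\Ker\varphi = M_\omega := \{f : f(\omega) = 0\}$ for some $\omega$, whence $\varphi = \varepsilon_\omega$ by unitality. Suppose instead that $\Ker\varphi \not\subseteq M_\omega$ for every $\omega$; then for each $\omega$ there is $f_\omega \in \Ker\varphi$ with $f_\omega(\omega) \neq 0$. The function $g_\omega = f_\omega \overline{f_\omega} = \abs{f_\omega}^2$ again lies in the ideal $\Ker\varphi$ (using $\overline{f_\omega} \in C(\Omega)$), is nonnegative, and is strictly positive on a neighbourhood of $\omega$. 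Covering $\Omega$ by finitely many such neighbourhoods and summing the corresponding $g_{\omega_i}$ yields an element of $\Ker\varphi$ that is strictly positive on all of $\Omega$, hence invertible, contradicting that $\Ker\varphi$ is proper. Thus $\varepsilon$ is onto and $\Omega \cong \Delta(C(\Omega))$.

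Finally I would transport the homeomorphism. An algebra isomorphism $j\colon C(\Omega_1) \to C(\Omega_2)$ induces the dual map $j^{*}\colon \Delta(C(\Omega_2)) \to \Delta(C(\Omega_1))$, $\psi \mapsto \psi \circ j$; it is well defined since $\psi \circ j$ is again a nonzero character, it is weak-$*$ continuous, and it is bijective with inverse $(j^{-1})^{*}$, hence a homeomorphism. Composing with the two identifications from the key lemma produces a homeomorphism $h\colon \Omega_2 \to \Omega_1$ characterised by $\varepsilon_{\omega_2}\circ j = \varepsilon_{h(\omega_2)}$. Evaluating then gives $j(f)(\omega_2) = (\varepsilon_{\omega_2}\circ j)(f) = f(h(\omega_2))$, that is $j(f) = f \circ h$, which is the asserted form (here $h^{-1}\colon \Omega_1 \to \Omega_2$ realises the homeomorphism in the statement). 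In particular $\Omega_1$ and $\Omega_2$ are homeomorphic, completing the reverse implication.
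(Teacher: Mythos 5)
Your proof is correct and follows exactly the route the paper indicates for this classical result (the paper itself gives no proof, citing Gelfand--Kolmogoroff and Garrido--Jaramillo, but explicitly notes that ``one may argue using the maximal ideal spaces''): identify each $\Omega_i$ with the character space of $C(\Omega_i)$ via evaluation, using compactness to show every character is a point evaluation, and transport the homeomorphism through the dual of $j$. Your parenthetical remark about the direction of $h$ also correctly repairs the small domain mismatch in the statement as printed.
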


The main issue that we shall address in this paper is the corresponding relationship between the topological structure of the set $\sigma$ and the algebraic structure of $\AC(\sigma)$. In Section~\ref{prelims} we shall recall the definition and main properties of $AC(\sigma)$ and then give a simple proof that if $\AC(\sigma_1)$ and $\AC(\sigma_2)$ are algebra isomorphic, then $\sigma_1$ and $\sigma_2$ are homeomorphic. The converse of this is false however. In Section~\ref{disk-square} we show that the algebra of absolutely continuous functions over the closed unit disk is not isomorphic to the algebra of absolutely continuous functions over a square.

If, however, one restricts the class of sets in which $\sigma$ may lie, one can recover some sort of analogue of the Banach-Stone Theorem. In Theorem~\ref{polygons} we show that if $P_1$ and $P_2$ are simple polygons, then $\AC(P_1)$ is algebra isomorphic to $\AC(P_2)$. In Section~\ref{poly-with-holes} we extend this result to cover more general sets based on polygons.

We note that different applications have led to quite a number of different concepts of absolute continuity for functions of two or more variables. The reader is cautioned that these concepts are generally distinct, and often, as is the case here, impose particular conditions on the domains of the functions considered. The definition of absolute continuity that is studied here was developed to have specific properties which are appropriate for the intended application to spectral theory, namely:
\begin{enumerate}
 \item it should apply to functions defined on the spectrum of a bounded operator, that is, an arbitrary nonempty compact subset $\sigma$ of the plane,
 \item it should agree with the usual definition if $\sigma$ is an interval in $\mR$;
 \item $\AC(\sigma)$ should contain all sufficiently well-behaved functions;
 \item if $\alpha,\beta \in \mC$ with $\alpha \ne 0$, then the space $\AC(\alpha \sigma + \beta)$ should be isometrically isomorphic to $\AC(\sigma)$.
\end{enumerate}
The interested reader may consult \cite{AD2}, \cite{mC} and \cite{dB} for a sample of what is known about the relationships between some of these definitions.

\smallskip
\textbf{Notation.} Suppose that $\mathcal{A}$ and $\mathcal{B}$ are Banach algebras.
We shall write $\mathcal{A} \simeq \mathcal{B}$ to mean that $\mathcal{A}$ and $\mathcal{B}$ are isomorphic (as Banach algebras).

\smallskip

Throughout, we shall use the term polygon to refer to a simple polygon including its interior. In particular, every such polygon is homeomorphic to the closed unit disk.

\section{Preliminaries}\label{prelims}

In this section we shall briefly outline the definition of the spaces $\AC(\sigma)$. Here we follow \cite{DL1} rather than the original definitions given in \cite{AD1}. Throughout, $\sigma$, $\sigma_1$ and $\sigma_2$ will denote nonempty compact subsets of the plane.
Although the original motivation for these definitions came from considering functions defined on subsets of the complex plane, for this paper it will be notationally easier consider the domains of the functions to be subsets of $\mR^2$.
We shall work throughout with algebras of complex-valued functions.

Suppose that $f: \sigma \to \mC$. Let $S =
\plist{\vecx_0,\vecx_1,\dots,\vecx_n}$ be a finite ordered list of elements of $\sigma$, where, for the moment, we shall assume that $n \ge 1$.
Note that the elements of such a list do not need to be distinct.

We define the \textit{curve variation of $f$ on the set $S$} to be
\begin{equation} \label{lbl:298}
    \cvar(f, S) =  \sum_{i=1}^{n} \abs{f(\vecx_{i}) - f(\vecx_{i-1})}.
\end{equation}

We shall also need to measure the `variation factor' of the list $S$. Loosely speaking, this is the greatest number of times that $\gamma_S$ crosses any line in the plane, where
$\gamma_S$ denotes the piecewise linear curve joining the points of $S$ in order. The following definition makes precise just what is meant by a crossing.

\begin{defn}\label{crossing-defn}
Suppose that $\ell$ is a line in the plane. We say that $\ls[\vecx_i,\vecx_{i+1}]$, the line segment joining $\vecx_i$ to $\vecx_{i+1}$, is a \textit{crossing segment} of $S = \plist{\vecx_0,\vecx_1,\dots,\vecx_n}$ on $\ell$ if any one of the following holds:
\begin{enumerate}[(i)]
  \item $\vecx_i$ and $\vecx_{i+1}$ lie on (strictly) opposite sides of $\ell$.
  \item $i=0$ and $\vecx_i \in \ell$.
  \item $i > 0$, $\vecx_i \in \ell$ and $\vecx_{i-1} \not\in \ell$.
  \item $i=n-1$, $\vecx_i \not\in \ell$ and $\vecx_{i+1} \in \ell$.
\end{enumerate}
In this case we shall write $\ls[\vecx_i,\vecx_{i+1}] \in X(S,\ell)$.
\end{defn}

\begin{defn}\label{vf-defn}
Let $\vf(S,\ell)$ denote the number of crossing segments of $S$ on $\ell$. The \textit{variation factor} of $S$ is defined to be
 \[ \vf(S) = \max_{\ell} \vf(S,\ell). \]
\end{defn}

Clearly $1 \le \vf(S) \le n$. For completeness, in the case that
$S = \plist{\vecx_0}$ we set $\cvar(f, \plist{\vecx_0}) = 0$ and let $\vf(\plist{\vecx_0},\ell) = 1$ whenever $\vecx_0 \in \ell$.

\begin{ex}
Consider the line $\ell$ and the list $S = [\vecx_i]_{i=0}^8$ as shown in Figure~\ref{crossings}. Let $s_i = \ls[\vecx_i,\vecx_{i+1}]$. Then the crossing segments for $S$ on $\ell$ are $s_0$ (rule (ii)), $s_2$ (rule (i)), $s_4$ (rule (iii)) and $s_7$ (rule (iv)). All the other segments are not crossing segments of $S$ on $\ell$. Thus $\vf(S,\ell) = 4$.

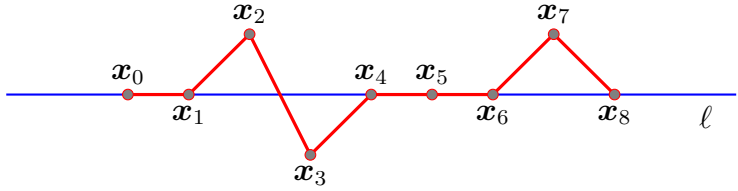
\begin{figure}[!ht]
\begin{center}
\begin{tikzpicture}[scale=0.8]
%
%
 \draw[thick,blue] (-1,0) --(11,0);
 \draw[very thick, red] (1,0) -- (2,0) -- (3,1) -- (4,-1) -- (5,0) -- (6,0) -- (7,0) -- (8,1) -- (9,0);
\draw[red] (1,0) node[circle, draw, fill=black!50,inner sep=0pt, minimum width=4pt] {};
\draw (1,0) node[above] {$\vecx_0$};
\draw[red] (2,0) node[circle, draw, fill=black!50,inner sep=0pt, minimum width=4pt] {};
\draw (2,0) node[below] {$\vecx_1$};
\draw[red] (3,1) node[circle, draw, fill=black!50,inner sep=0pt, minimum width=4pt] {};
\draw (3,1) node[above] {$\vecx_2$};
\draw[red] (4,-1) node[circle, draw, fill=black!50,inner sep=0pt, minimum width=4pt] {};
\draw (4,-1) node[below] {$\vecx_3$};
\draw[red] (5,0) node[circle, draw, fill=black!50,inner sep=0pt, minimum width=4pt] {};
\draw (5,0) node[above] {$\vecx_4$};
\draw[red] (6,0) node[circle, draw, fill=black!50,inner sep=0pt, minimum width=4pt] {};
\draw (6,0) node[above] {$\vecx_5$};
\draw[red] (7,0) node[circle, draw, fill=black!50,inner sep=0pt, minimum width=4pt] {};
\draw (7,0) node[below] {$\vecx_6$};
\draw[red] (8,1) node[circle, draw, fill=black!50,inner sep=0pt, minimum width=4pt] {};
\draw (8,1) node[above] {$\vecx_7$};
\draw[red] (9,0) node[circle, draw, fill=black!50,inner sep=0pt, minimum width=4pt] {};
\draw (9,0) node[below] {$\vecx_8$};

 \draw (10.5,0) node[below] {$\ell$};

\end{tikzpicture}
\caption{Examples of crossing segments.}\label{crossings}
\end{center}
\end{figure}

\end{ex}

The \textit{two-dimensional variation} of a function $f : \sigma
\rightarrow \mC$ is defined to be
\begin{equation} \label{lbl:994}
    \var(f, \sigma) = \sup_{S}
        \frac{ \cvar(f, S)}{\vf(S)},
\end{equation}
where the supremum is taken over all finite ordered lists of elements of $\sigma$.
The \textit{variation norm} is
  \[ \normbv{f} = \norm{f}_\infty + \var(f,\sigma) \]
and the set of functions of bounded variation on $\sigma$ is
  \[ \BV(\sigma) = \{ f: \sigma \to \mC \st \normbv{f} < \infty\}. \]
The space $\BV(\sigma)$ is a Banach algebra under pointwise operations \cite[Theorem 3.8]{AD1}. If $\sigma = [0,1]$ then the above definition is equivalent to the more classical one.

Let $\Pol_2$ denote the space of polynomials in two real variables of the form $p(x,y) = \sum_{n,m} c_{nm} x^n y^m$, and let $\Pol_2(\sigma)$ denote the restrictions of elements on $\Pol_2$ to $\sigma$. The algebra $\Pol_2(\sigma)$ is always a subalgebra of $\BV(\sigma)$ \cite[Corollary 3.14]{AD1}.

\begin{defn}
The set of absolutely continuous functions on $\sigma$, denoted $\AC(\sigma)$, is the closure of $\Pol_2(\sigma)$ in $\BV(\sigma)$.
\end{defn}

The set $\AC(\sigma)$ forms a closed subalgebra of $\BV(\sigma)$ and hence is a Banach algebra.

We shall say that $f \in C^1(\sigma)$ if there exists an open neighbourhood $U$ of $\sigma$ and
an extension $F$ of $f$ to $U$ such that the partial derivatives of $F$ (of order one) are continuous on $U$.
The space $\CTPP(\sigma)$ consists of those functions $f$ for which there is a triangulation of a neighbourhood $U$ of $\sigma$ and an extension of $f$ to $U$ which is continuous and piecewise planar on this triangulation.
It was shown in \cite{DL1} that both $C^1(\sigma)$ and $\CTPP(\sigma)$ are dense subsets of $\AC(\sigma)$.

Our first step is to show that if $\AC(\sigma_1)$ and $\AC(\sigma_2)$ are isomorphic as algebras, then $\sigma_1$ and $\sigma_2$ must be homeomorphic. We note that one does not need to assume that the isomorphism is continuous.

\begin{lem}\label{spec_rad}
 Suppose that $f \in \AC(\sigma)$. Then the spectrum of $f$ is $\sigma(f) = f(\sigma)$ and hence the spectral radius of $f$ is $r(f) = \norm{f}_\infty$.
\end{lem}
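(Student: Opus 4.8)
The plan is to prove the two inclusions $f(\sigma) \subseteq \sigma(f)$ and $\sigma(f) \subseteq f(\sigma)$ separately; the spectral radius formula then follows at once, since every function in $\AC(\sigma)$ is a uniform limit of polynomials and hence continuous on the compact set $\sigma$, so that
\[ r(f) = \max_{\mu \in \sigma(f)} \abs{\mu} = \max_{\mu \in f(\sigma)} \abs{\mu} = \max_{\vecx \in \sigma} \abs{f(\vecx)} = \norminf{f}. \]

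The inclusion $f(\sigma) \subseteq \sigma(f)$ is the routine direction. Recall that $\AC(\sigma)$ is a subalgebra of $\BV(\sigma)$ under pointwise operations, with identity the constant function $1$. If $\mu = f(\vecx_0)$ for some $\vecx_0 \in \sigma$, then $f - \mu$ vanishes at $\vecx_0$. Were it invertible in $\AC(\sigma)$, say with inverse $h$, then evaluating $(f - \mu)h = 1$ at $\vecx_0$ would give $0 = 1$. Hence $f - \mu$ is not invertible and $\mu \in \sigma(f)$. Equivalently, each point evaluation $f \mapsto f(\vecx_0)$ is a character of $\AC(\sigma)$.

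For the reverse inclusion it suffices to show that if $\mu \notin f(\sigma)$ then $g := f - \mu$ is invertible in $\AC(\sigma)$, that is, that $1/g \in \AC(\sigma)$. Since $g$ is continuous and nowhere zero on the compact set $\sigma$, we have $\abs{g} \ge \delta > 0$, so the compact set $g(\sigma)$ is contained in $\mC \setminus \{0\}$, on a neighbourhood of which $z \mapsto 1/z$ is holomorphic. The idea is to transfer this to $\sigma$: the reciprocal $1/g$ is the composition of $g \in \AC(\sigma)$ with $z \mapsto 1/z$, which restricts to a $C^1$ (indeed holomorphic) function on a neighbourhood of $g(\sigma)$ and hence lies in $\AC(g(\sigma))$; invoking the closure of the $\AC$ classes under such compositions then gives $1/g \in \AC(\sigma)$.

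The crux of the argument---and the step I expect to be the main obstacle---is exactly this composition/reciprocal step, since membership in $\AC(\sigma)$ is controlled by the two-dimensional variation $\var(\cdot,\sigma)$ rather than by differentiability, and one must produce a genuine $\BV(\sigma)$-bound. One concrete route is to approximate $g$ by polynomials $p_n \to g$ in $\BV(\sigma)$; uniform convergence forces $\abs{p_n} \ge \delta/2$ eventually, so each $1/p_n$ is smooth near $\sigma$ and thus lies in $C^1(\sigma) \subseteq \AC(\sigma)$. It then remains to show $1/p_n \to 1/g$ in $\BV(\sigma)$, for which the key estimate is a uniform bound on $\normbv{1/p_n}$, controlling $\var(1/p_n, \sigma)$ in terms of $\var(p_n,\sigma)$ and $\delta$; the closedness of $\AC(\sigma)$ then places the limit $1/g$ in $\AC(\sigma)$, completing the reverse inclusion.
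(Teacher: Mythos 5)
Your argument is correct, but it is not what the paper does: the paper disposes of this lemma in one line by citing \cite[Corollary 3.9]{AD1}, which already records that elements of $\BV(\sigma)$ (and $\AC(\sigma)$) bounded away from zero are invertible, so no argument appears in the paper at all. What you have done is reconstruct the substance of that cited result. Your easy inclusion via point evaluations is exactly right, and you correctly identify the crux as inverse-closedness of $\AC(\sigma)$. One caution: the intermediate suggestion that $1/g$ is obtained by composing $g$ with the holomorphic map $z \mapsto 1/z$ and ``invoking the closure of the $\AC$ classes under such compositions'' cannot be taken for granted here --- composition results for $\AC(\sigma)$ are delicate and are not established in this paper --- so it is good that you immediately replace that heuristic by the concrete polynomial route. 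That route does close: if $\abs{h} \ge \delta$ on $\sigma$ then for any list $S$ one has $\cvar(1/h,S) \le \delta^{-2}\cvar(h,S)$ term by term, whence $\var(1/h,\sigma) \le \delta^{-2}\var(h,\sigma)$ and $\normbv{1/h} \le \delta^{-1} + \delta^{-2}\var(h,\sigma)$. Applied to the approximating polynomials $p_n$ (which satisfy $\abs{p_n} \ge \delta/2$ eventually, so that $1/p_n \in C^1(\sigma) \subseteq \AC(\sigma)$), this gives a uniform bound on $\normbv{1/p_n}$, and the Banach-algebra inequality $\normbv{1/p_n - 1/p_m} \le \normbv{p_n - p_m}\,\normbv{1/p_n}\,\normbv{1/p_m}$ then shows that $(1/p_n)$ is Cauchy in the closed subalgebra $\AC(\sigma)$, with limit an inverse of $g$; this also avoids having to know $1/g \in \BV(\sigma)$ in advance. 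So your proof is complete modulo writing out this one estimate; what it buys over the paper's treatment is a self-contained argument, at the cost of re-proving a fact the authors treat as known.
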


\begin{proof}
 This is more or less immediate from \cite[Corollary 3.9]{AD1}.
\end{proof}

\begin{thm}\label{alg_props}
 Suppose that $j: \AC(\sigma_1) \to \AC(\sigma_2)$ is an algebra isomorphism. Then
\begin{enumerate}
 \item\label{pt1}
        $\norm{f}_\infty =  \norm{j(f)}_\infty$ for all $f \in \AC(\sigma_1)$.
 \item there exists a homeomorphism $h: \sigma_1 \to \sigma_2$.
 \item\label{pt3}
         $j(f) = f \circ h^{-1}$ for all $f \in \AC(\sigma_1)$.
 \item $j$ is continuous.
\end{enumerate}
\end{thm}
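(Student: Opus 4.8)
The plan is to identify the characters (multiplicative linear functionals) of $\AC(\sigma)$ with the points of $\sigma$, and then to note that an algebra isomorphism automatically transports every purely algebraic invariant: the unit, invertibility, and hence the spectrum of each element. Part~\eqref{pt1} is then almost immediate. Since $j$ is a bijective algebra homomorphism it must carry the identity of $\AC(\sigma_1)$ to that of $\AC(\sigma_2)$ and send invertible elements to invertible elements, so $f$ and $j(f)$ have the same spectrum. Lemma~\ref{spec_rad} equates the spectral radius of an element with its sup-norm, so $\norm{f}_\infty = r(f) = r(j(f)) = \norm{j(f)}_\infty$.

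The core of the argument is the description of the character space. Write $\pi_1,\pi_2 \in \Pol_2(\sigma)$ for the coordinate functions $\pi_1(x,y)=x$ and $\pi_2(x,y)=y$. Given a character $\phi$, set $s = (\phi(\pi_1),\phi(\pi_2))$. For any $g$ the scalar $\phi(g)$ lies in the spectrum of $g$ (as $g - \phi(g)\,1$ lies in the proper ideal $\Ker \phi$), so applying this to the polynomial $g = (\pi_1 - \phi(\pi_1))^2 + (\pi_2 - \phi(\pi_2))^2$, for which $\phi(g)=0$, Lemma~\ref{spec_rad} forces $0 \in g(\sigma)$ and hence $s \in \sigma$. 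On $\Pol_2(\sigma)$ the functional $\phi$ then agrees with the point evaluation $\delta_s$, and since every character of a commutative unital Banach algebra is continuous while $\Pol_2(\sigma)$ is dense in $\AC(\sigma)$, we conclude $\phi = \delta_s$. Thus $s \mapsto \delta_s$ is a bijection of $\sigma$ onto the character space.

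With this I would transport characters through $j$. For $t \in \sigma_2$ the functional $\delta_t \circ j$ is a character of $\AC(\sigma_1)$, so it equals $\delta_{g(t)}$ for a unique $g(t) \in \sigma_1$; applying the same to $j^{-1}$ shows $g:\sigma_2 \to \sigma_1$ is a bijection. Evaluating, $j(f)(t) = (\delta_t \circ j)(f) = f(g(t))$, i.e. $j(f) = f \circ g$, and setting $h = g^{-1}$ gives \eqref{pt3}. Continuity of $g$ is free: its coordinate functions are $\pi_k \circ g = j(\pi_k) \in \AC(\sigma_2) \subseteq C(\sigma_2)$, and the identical reasoning for $j^{-1}$ shows $g^{-1}$ is continuous, so $h$ is a homeomorphism, proving~(2). (Even just continuity of the bijection $g$ between compact Hausdorff spaces would already make it a homeomorphism.)

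Finally \eqref{pt3} yields~(4) via the closed graph theorem. If $f_n \to f$ in $\AC(\sigma_1)$ and $j(f_n) \to F$ in $\AC(\sigma_2)$, then since the $\BV$-norm dominates the sup-norm both convergences are uniform; but $j(f_n) = f_n \circ h^{-1} \to f \circ h^{-1} = j(f)$ uniformly, because precomposition with the bijection $h^{-1}$ preserves uniform convergence. Hence $F = j(f)$, the graph is closed, and $j$ is bounded. The step I expect to demand the most care is the character description itself --- specifically, verifying that the point $s$ genuinely lies in $\sigma$ and that no characters beyond the point evaluations arise --- since once the maximal ideal space of $\AC(\sigma)$ is pinned down to $\sigma$, the remaining assertions are formal consequences.
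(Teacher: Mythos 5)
Your proof is correct, and parts (1) and (4) coincide with the paper's argument (spectral radius via Lemma~\ref{spec_rad}, then the Closed Graph Theorem). For parts (2) and (3), however, you take a genuinely different route. The paper uses the sup-norm identity from part (1) together with the density of $\AC(\sigma_i)$ in $C(\sigma_i)$ (Stone--Weierstrass) to extend $j$ to an isometric isomorphism $\hat\jmath: C(\sigma_1)\to C(\sigma_2)$, and then invokes the Banach--Stone/Gelfand--Kolmogoroff theorem to obtain $h$ and the composition formula, restricting back to $\AC(\sigma_1)$. You instead compute the maximal ideal space of $\AC(\sigma)$ directly, showing every character is a point evaluation, and transport characters through $j$. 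Your route is more self-contained (it does not outsource to the classical $C(\Omega)$ theory and it yields the Gelfand space of $\AC(\sigma)$ as a byproduct), at the cost of being longer; the paper's is quicker but relies on the extension step. One small point deserves care in your character computation: since the algebra consists of complex-valued functions, the vanishing of $(x-\phi(\pi_1))^2+(y-\phi(\pi_2))^2$ at some $(x,y)\in\sigma$ only forces $(x,y)=(\phi(\pi_1),\phi(\pi_2))$ once you know $\phi(\pi_1)$ and $\phi(\pi_2)$ are real; this is immediate from the same spectral containment you already use, namely $\phi(\pi_k)\in\sigma(\pi_k)=\pi_k(\sigma)\subseteq\mR$, but it should be said explicitly.
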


\begin{proof}
Since $j$ preserves the identity element, it also preserves the spectrum of elements. Thus, using Lemma~\ref{spec_rad},
  \[ \norm{f}_\infty = r(f) = r(j(f)) = \norm{j(f)}_\infty
  \]
for all $f \in \AC(\sigma)$. Since $\AC(\sigma_1)$ is dense in $C(\sigma_1)$, this implies that $j$ extends to an isometric isomorphism $\hat \jmath : C(\sigma_1) \to C(\sigma_2)$ and hence, by the Banach-Stone Theorem, $\sigma_1$ is homeomorphic to $\sigma_2$. Indeed there exists a homeomorphism $h: \sigma_1 \to \sigma_2$ such that  ${\hat \jmath}(f) = f \circ h^{-1}$ for all $f \in C(\sigma_1)$. Restricting this to $\AC(\sigma_1)$ gives part~\ref{pt3}.

Suppose that $f_n \to f$ in $\AC(\sigma_1)$. Then certainly $f_n \to f$ uniformly and hence pointwise.
Suppose that $j(f_n) \to g$ in $\AC(\sigma_2)$ (and hence also pointwise). Then for all $x \in \sigma_2$,
  \[ g(x) = \lim_n j(f_n)(x) = \lim_n f_n(h^{-1}(x)) = f(h^{-1}(x)) = j(f)(x) \]
and hence $j(f) = g$. Thus, by the Closed Graph Theorem, $j$ is continuous.
\end{proof}

It is easy to find homeomorphic sets $\sigma_1$ and $\sigma_2$ for which $\AC(\sigma_1)$ and $\AC(\sigma_2)$ are algebra isomorphic, but not isometrically. If the isomorphism preserves norms, then part~\ref{pt1} of the above theorem implies that it also preserves variation.

\begin{cor}\label{pres_var} Suppose that $j: \AC(\sigma_1) \to \AC(\sigma_2)$ is an isometric Banach algebra isomorphism. Then
 $\var(f,\sigma_1) = \var(j(f),\sigma_2)$ for all $f \in \AC(\sigma_1)$.
\end{cor}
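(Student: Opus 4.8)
The plan is to exploit the additive structure of the variation norm together with the two facts we have already established about $j$. Recall that by definition $\norm{f}_{\BV(\sigma)} = \norm{f}_\infty + \var(f,\sigma)$, so the variation is exactly the gap between the full norm and its uniform summand. The hypothesis supplies that $j$ is an isometry for the variation norm, while part~\ref{pt1} of Theorem~\ref{alg_props} supplies that $j$ preserves the uniform norm; the corollary is the observation that these two pieces of information force the variation to be preserved as well.

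First I would record that, since $j$ is an isometric isomorphism of $\AC(\sigma_1)$ onto $\AC(\sigma_2)$, for every $f \in \AC(\sigma_1)$ we have
\[ \norm{f}_\infty + \var(f,\sigma_1) = \norm{f}_{\BV(\sigma_1)} = \norm{j(f)}_{\BV(\sigma_2)} = \norm{j(f)}_\infty + \var(j(f),\sigma_2), \]
where the two outer equalities are merely the definition of the variation norm on $\sigma_1$ and $\sigma_2$ respectively, and the middle equality is the isometry hypothesis.

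Next I would invoke part~\ref{pt1} of Theorem~\ref{alg_props}, which gives $\norm{f}_\infty = \norm{j(f)}_\infty$. Note that this equality holds for \emph{any} algebra isomorphism, whether or not it is isometric, since it follows purely from the spectral-radius computation in Lemma~\ref{spec_rad}. Subtracting these equal uniform norms from the corresponding sides of the displayed identity leaves $\var(f,\sigma_1) = \var(j(f),\sigma_2)$, as required.

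There is no genuine obstacle to overcome here: the only ingredient used beyond the two hypotheses is the splitting of the variation norm into its uniform part and its variation part. The point worth emphasizing is conceptual rather than technical — the uniform summand is already pinned down by the algebraic structure alone (Theorem~\ref{alg_props}), so the additional strength of the word \emph{isometric} is precisely the assertion that the remaining summand, the variation, is also preserved.
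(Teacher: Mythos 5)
Your proof is correct and is exactly the argument the paper intends: the corollary is stated in the text as following from part~\ref{pt1} of Theorem~\ref{alg_props} together with the decomposition $\normbv{f} = \norm{f}_\infty + \var(f,\sigma)$, which is precisely the subtraction you carry out. Nothing further is needed.
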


\begin{ex}
 Let $\sigma_1 = \{0,1,2\}$ and $\sigma_2 = \{0,1,i\}$. Since $\sigma_1 \subseteq \mR$,
  \[ \norm{f}_{\BV(\sigma_1)} = \norm{f}_\infty + |f(1)-f(0)|+|f(2)-f(1)|, \qquad f \in \AC(\sigma_1). \]
On the other hand, any function defined on $\sigma_2$ can clearly be written in the form
  $f(x+iy) = g(y-x)$ for some function $g$ of one real variable. Lemma~3.12 and Proposition~3.10 of \cite{AD1} then imply that the norm for  $f \in  \AC(\sigma_2)$ is given by
  \[ \norm{f}_{\BV(\sigma_2)} = \norm{f}_\infty +
      \max\bigl(|f(1)-f(0)|, |f(i)-f(0)|, |f(i)-f(1)|\bigr).
  \]
Any isomorphism must map idempotents to idempotents. However it is easy to see that while all idempotents in $\AC(\sigma_2)$ have variation at most 1, the algebra $\AC(\sigma_1)$ contains the idempotent with $f(0) = f(2) = 0$ and $f(1) = 1$ which has variation 2. Thus these algebras can not be isometrically isomorphic.
\end{ex}


In the other direction, suppose that $\alpha: \mR^2 \to \mR^2$ is an invertible affine transformation. 
It is clear from the definition of variation that $\norm{f}_{\BV(\sigma)} = \norm{f \circ \alpha^{-1}}_{\BV(\alpha(\sigma))}$. Since affine transformations preserve polynomials, it is clear that $\AC(\sigma)$ is isometrically isomorphic to $\AC(\alpha(\sigma))$. (This is a very small extension of \cite[Theorem 4.1]{AD1}.) In Section~\ref{HPAM} we shall extend this to slightly more general transformations of the plane, at the expense of the algebra isomorphism no longer being isometric.

\section{The disk and the square}\label{disk-square}


Let $Q = [0,1] \times [0,1] \subseteq \mR^2$ denote the closed unit square and let $D = \{\vecx \in \mR^2 \st \norm{\vecx} \le 1\}$ denote the closed unit disk. These sets are clearly homeomorphic.
The aim of this section is to show that $\AC(Q) \not\simeq \AC(D)$.

\begin{thm}\label{square-circle} $\AC(Q)$ and $\AC(D)$ are not isomorphic as algebras.
\end{thm}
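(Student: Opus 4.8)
The plan is to argue by contradiction, first converting the algebraic hypothesis into a statement about composition operators and then playing off the rotational symmetry of the disk against the four corners of the square. Suppose $\AC(Q) \simeq \AC(D)$ via an algebra isomorphism $j$. By Theorem~\ref{alg_props} there is a homeomorphism $h : Q \to D$ with $j(f) = f \circ h^{-1}$, and both $j$ and $j^{-1}$ are automatically bounded; writing $K = \norm{j}\,\norm{j^{-1}}$ we obtain the two-sided comparability $K^{-1}\norm{f}_{\BV(Q)} \le \norm{f \circ h^{-1}}_{\BV(D)} \le K\norm{f}_{\BV(Q)}$, so in particular $\var(f,Q)$ and $\var(f\circ h^{-1},D)$ are comparable up to the fixed constant $K$. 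Applying $j$ and $j^{-1}$ to the coordinate functions also shows that the components of $h$ and of $h^{-1}$ lie in $\AC(D)$ and $\AC(Q)$ respectively, so $h$ carries some regularity that I expect to need when analysing the corners.

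Next I would transport the rotational symmetry of the disk into $\AC(Q)$. Each planar rotation $\rho_t$ is an affine isometry, so by the affine invariance recorded just before Section~\ref{disk-square} it induces an isometric algebra automorphism $R_t$ of $\AC(D)$ with $\norm{R_t}=1$, given by $R_t g = g\circ \rho_t^{-1}$. Conjugating, $A_t := j^{-1} R_t j$ is a one-parameter group of algebra automorphisms of $\AC(Q)$ with $\norm{A_t} \le K$ for every $t$. Applying Theorem~\ref{alg_props} to each $A_t$ produces homeomorphisms $\psi_t : Q \to Q$ with $A_t(f) = f \circ \psi_t$, and unwinding the conjugation gives $\psi_t = h^{-1}\circ \rho_t^{-1} \circ h$. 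Since $h$ restricts to a homeomorphism $\partial Q \to \partial D$ and the rotations act transitively on the circle $\partial D$, the family $\set{\psi_t}$ acts transitively on $\partial Q$; in particular the orbit of any corner of $Q$ under $\set{\psi_t}$ is the whole of $\partial Q$.

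The contradiction I am aiming for is that the four corners of $Q$ form a distinguished finite set that every one of these composition automorphisms must preserve, so that no boundary-transitive family $\set{\psi_t}$ can exist. The heart of the matter is therefore a separate lemma: if $\psi : Q \to Q$ is a homeomorphism for which $C_\psi : \AC(Q) \to \AC(Q)$ is a bounded isomorphism, then $\psi$ permutes the corners of $Q$. The guiding mechanism is that cornerness is visible to the variation seminorm through the variation factor: along a straight edge one can string collinear points together and build peak-type test functions whose variation factor stays at $1$, whereas at a corner a peak straddling the vertex runs along two edges meeting at interior angle $\tfrac{\pi}{2}$, so a single line meets the joining path twice and the variation factor is forced up. The main obstacle, and where essentially all of the work lies, is that $C_\psi$ distorts variation by the uncontrolled factor $K$, while the interior angle enters the variation only through bounded changes in the variation factor; the naive corner signatures thus differ from the edge signatures by bounded factors that $K$ can absorb. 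To succeed, the corner-detecting property must be recast as a genuine qualitative dichotomy — an existence-versus-nonexistence or finite-versus-infinite statement stable under multiplication by $K$ — capturing that a boundary-to-boundary homeomorphism cannot turn the angle $\tfrac{\pi}{2}$ into the flat angle $\pi$ of an open edge without breaking the two-sided bound. Granting the lemma, the orbit of a corner under $\set{\psi_t}$ lies in the four-point corner set, contradicting its transitivity on $\partial Q$, and hence $\AC(Q) \not\simeq \AC(D)$.
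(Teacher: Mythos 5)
Your reduction to the corner-permutation lemma is where the argument stops being a proof: the statement ``if $\psi : Q \to Q$ is a homeomorphism for which $C_\psi$ is a bounded automorphism of $\AC(Q)$, then $\psi$ permutes the corners of $Q$'' is exactly the missing content, and you concede in the proposal itself that you do not know how to establish it (``where essentially all of the work lies''). Everything before that point is sound --- the conjugated one-parameter family $A_t = j^{-1}R_tj$ is uniformly bounded by $K = \norm{j}\,\norm{j^{-1}}$, each $A_t$ is a composition operator by Theorem~\ref{alg_props}, and the induced maps $\psi_t$ do act transitively on $\partial Q$ --- but the contradiction never materialises without the lemma. Worse, the mechanism you propose for it (a corner forces the variation factor up by a bounded amount relative to a flat edge) is precisely the kind of signature that the paper's own results show is \emph{not} an isomorphism invariant: Remark~\ref{triangle-square} gives $\AC(T) \simeq \AC(Q)$ for a triangle $T$, and Theorem~\ref{polygons} shows the isomorphism class of $\AC(P)$ is blind to the number and angles of the vertices of a polygon $P$. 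Corners are configurations of straight segments, and Sections~4--6 show that piecewise-straight distortions only perturb $\BV$ norms by bounded factors; so a ``qualitative dichotomy'' separating a right angle from a flat angle, stable under the constant $K$, is unlikely to come from variation-factor counting at all. The lemma may conceivably be true for self-homeomorphisms of $Q$, but nothing in your sketch, or in the toolkit the paper develops, points to a proof.

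The paper's argument avoids this entirely by exploiting a distinction that genuinely is unbounded: straight versus strictly convex. It takes the edge $[0,1]\times\{0\}$ of $Q$, whose image under $h$ is an arc of the circle, places $n+1$ points $\vecp_k = h(k/n,0)$ on that arc, and nudges the odd-indexed ones slightly into the disk (as images $h(k/n,\delta_n)$) so that the resulting list $S_n$ is in convex position, forcing $\vf(S_n)=2$. The test function $f_n(x,y)=\min(y/\delta_n,1)$ has $\norm{f_n}_{\AC(Q)}=2$ for every $n$, yet $j(f_n)=f_n\circ h^{-1}$ alternates between $0$ and $1$ along $S_n$, giving $\var(j(f_n),D)\ge n/2$ and contradicting $\norm{j(f_n)}\le 2\norm{j}$. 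If you want to salvage your symmetry idea, you would still need an invariant of this unbounded type to distinguish a corner from an edge point; as it stands the proposal has a genuine gap at its central step.
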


\begin{proof}
 Suppose that $j: \AC(Q) \to \AC(D)$ is an algebra isomorphism. By Theorem~\ref{alg_props}, the map $j$ is continuous and hence $\norm{j(f)}_{\AC(D)} \le \norm{j} \norm{f}_{\AC(Q)}$ for all $f \in \AC(Q)$.

Let $h: Q \to D$ be the homeomorphism associated with $j$.  Then $h([0,1]\times \{0\})$ is a closed arc on the unit circle $\partial D$.

Let $n \in \mN$ be even. 
For $0 \le k \le n$, let $\vecp_k = h(\frac{k}{n},0)$. Now choose $\epsilon_n > 0$ small enough so that, for every odd $k$, the $\epsilon_n$-disc with centre $\vecp_k$ does not meet the line segment $\ls[\vecp_{k-1},\vecp_{k+1}]$.

Now let $\delta_n > 0$ be chosen (using the uniform continuity of $h$) so that if $\vecx,\vecx' \in Q$ with $\norm{\vecx-\vecx'} \le \delta_n$ then $\norm{h(\vecx)-h(\vecx')} < \epsilon_n$.  Without loss we may assume that  $\delta_n \le 1$.

\begin{figure}[!ht]
\begin{center}
\begin{tikzpicture}[scale=0.8]
%
%
 \draw[red] (-1,0) -- (4,0) ;   
 \draw[red] (0,-1) -- (0,4);
 \fill[blue!15] (0,0) -- (3,0) -- (3,3) -- (0,3) -- (0,0);   
 \draw[very thick,black] (0,0) -- (3,0) -- (3,3) -- (0,3) -- (0,0);

 \draw[red] (1.1,0) node[circle, draw, fill=black!50,inner sep=0pt, minimum width=4pt] {};
 \draw[red] (1.8,0) node[circle, draw, fill=black!50,inner sep=0pt, minimum width=4pt] {};
 \draw[red] (2.5,0) node[circle, draw, fill=black!50,inner sep=0pt, minimum width=4pt] {};
 \draw[black] (1.8,0.4) node[circle, draw, fill=black!50,inner sep=0pt, minimum width=4pt] {};

 \draw[black] (1.1,0) node[below] {$\frac{k-1}{n}$};
 \draw[black] (1.8,0) node[below] {$\frac{k}{n}$};
 \draw[black] (2.5,0) node[below] {$\frac{k+1}{n}$};
 \draw[black] (1.8,0.4) node[above] {$\left(\frac{k}{n},\delta_n\right)$};

 \draw[black] (2,3.05) node[above] {$Q$};

  \path[thick,->] (4.5,3) edge [bend left] (6,3);
  \draw[black] (5.25,3.3) node[above] {$h$};

 \draw[red] (7,1.5) -- (12,1.5) ;
 \draw[red] (9.5,-1) -- (9.5,4);

 \fill[blue!15] (9.5,1.5) circle (2cm);

 \draw[thick,black] (9.5,1.5) circle (2cm);

 \draw[red] (11.34,0.7212) node[circle, draw, fill=black!50,inner sep=0pt, minimum width=4pt] {};
 \draw[red] (11.15,2.629) node[circle, draw, fill=black!50,inner sep=0pt, minimum width=4pt] {};
 \draw[red] (10.04, 3.427) node[circle, draw, fill=black!50,inner sep=0pt, minimum width=4pt] {};
 \draw[red,dashed] (11.34,0.7212) -- (10.04, 3.427);
 \draw[black] (11.34,0.7212) node[right] {$\vecp_{k-1}$};
 \draw[black] (11.15,2.629) node[right] {$\vecp_{k}$};
 \draw[black] (10.04, 3.6) node[right] {$\vecp_{k+1}$};
 \draw[black] (10.9,2.45) node[circle, draw, fill=black!50,inner sep=0pt, minimum width=4pt] {};
 \draw[black] (10.9,2.45) node[left] {$\tilde{\vecp}_{k}$};

 \draw[red] (9.242, -0.483) node[circle, draw, fill=black!50,inner sep=0pt, minimum width=4pt] {};
 \draw[red,dashed] (9.242, -0.483) -- (11.34,0.7212);
 \draw[red] (7.692, 2.355) node[circle, draw, fill=black!50,inner sep=0pt, minimum width=4pt] {};
 \draw[red,dashed] (7.692, 2.355) -- (10.04, 3.427);

 \draw[black] (10.5,0) node[circle, draw, fill=black!50,inner sep=0pt, minimum width=4pt] {};
 \draw[black] (8.8, 3.1) node[circle, draw, fill=black!50,inner sep=0pt, minimum width=4pt] {};
 \draw[black] (9.242, -0.483) -- (10.5,0) -- (11.34,0.7212) -- (10.9,2.45) -- (10.04, 3.427) --(8.8, 3.1) -- (7.692, 2.355);

 \draw[black] (8,0) node[left] {$D$};

\end{tikzpicture}
\caption{The construction of $\tilde{\vecp}_k$ in the proof of Theorem~\ref{square-circle}.}
\end{center}
\end{figure}

For each odd $k$, let $\tilde{\vecp}_k = h(\frac{k}{n},\delta_n) \in B(\vecp_k,\epsilon_n)$.
Let $S = [\vecp_0,\tilde{\vecp}_1,\vecp_2,\tilde{\vecp}_3,\dots,\linebreak[1] \tilde{\vecp}_{n-1},\vecp_n] \subseteq D$.
 It is easy to see that the points of $S$ form the vertices of a convex subset of $D$ and so, in particular, $\vf(S_n) = 2$.

Consider the  map $f_n: Q \to \mR$ defined by $f_n(x,y) = \min(y/\delta_n,1)$. Clearly $f_n \in \AC(Q)$ with $\norm{f_n}_{\AC(Q)} =2$. Define $g_n: D \to \mR$ by $g_n = f_n \circ h^{-1} = j(f_n)$.
Then $g_n(\vecp_k) = 0$ for $k$ even, and $g_n(\tilde{\vecp}_k) = 1$ for $k$ odd. Thus
  \[ \norm{g_n}_{\AC(D)} \ge  \var(g_n,D) \ge \frac{\cvar(g_n,S_n)}{\vf(S_n)} = \frac{n}{2}. \]
But for all $n$, $\norm{g_n}_{\AC(D)} \le \norm{j} \norm{f_n}_{\AC(Q)} \le 2 \norm{j}$ and hence we have a contradiction.
\end{proof}


As we shall now show, there are severe restrictions on the behaviour of any algebra isomorphism  which is associated with a $C^2$ homeomorphism from $Q$ to another compact subset of the plane. 

\begin{lem}
 Suppose that $\Omega \subseteq \mR^2$ is compact. Then a set $\ell \subseteq \Omega$ is a closed line segment if and only if it is closed, convex and can be disconnected by the removal of a single point.
\end{lem}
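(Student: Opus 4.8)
The plan is to treat the two implications separately, with essentially all of the content lying in the converse. Throughout, I would observe at the outset that since $\ell$ is a closed subset of the compact set $\Omega$, it is itself compact; the ambient set $\Omega$ plays no role beyond guaranteeing boundedness.

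For the forward implication, suppose $\ell = \ls[a,b]$ is a (nondegenerate) closed line segment. It is manifestly closed and convex, and the removal of any point strictly between $a$ and $b$ splits it into two nonempty, disjoint, relatively open pieces, so $\ell$ can be disconnected by the removal of a single point. This direction requires no real work.

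For the converse, suppose $\ell$ is closed (hence compact), convex, and admits a cut point $p$, i.e. $\ell \setminus \set{p}$ is disconnected. The idea is to run through the classification of compact convex subsets of $\mR^2$ by affine dimension: $\ell$ has affine dimension $0$, $1$, or $2$. I would show that the cut-point hypothesis forces the dimension to be exactly $1$, and that a one-dimensional compact convex set is necessarily a closed segment. The latter claim is immediate: a convex set of affine dimension $1$ lies in a line $L$, and viewed inside $L \cong \mR$ it is a compact convex subset, hence a closed bounded interval, that is, a closed line segment.

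It then remains to exclude dimensions $0$ and $2$. Dimension $0$ is trivial: there $\ell$ is a single point and $\ell \setminus \set{p}$ is empty, which is not disconnected. The main obstacle is dimension $2$, where I must show that a compact convex set with nonempty interior cannot be disconnected by deleting one point. I would prove path-connectedness of $\ell \setminus \set{p}$ directly: given $x,y \in \ell \setminus \set{p}$, if $p \notin \ls[x,y]$ then the segment $\ls[x,y] \subseteq \ell$ already joins them while avoiding $p$; if instead $p$ lies on $\ls[x,y]$ (necessarily strictly between $x$ and $y$, since $p \ne x,y$), then because $\dim \ell = 2$ there is a point $z \in \ell$ off the line through $x$ and $y$, and the broken path $\ls[x,z] \cup \ls[z,y]$ stays in $\ell$ and meets that line only at $x$ and $y$, hence avoids $p$. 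This shows $\ell \setminus \set{p}$ is path-connected, contradicting the hypothesis and ruling out dimension $2$. Assembling these three cases completes the converse and hence the proof.
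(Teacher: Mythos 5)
Your argument is correct, but it follows a genuinely different route from the paper's. The paper argues directly from the separation: if $\vecx$ is the cut point and $\ell_1,\ell_2$ are the two pieces, then for fixed $\vecy_1\in\ell_1$ and arbitrary $\vecy_2\in\ell_2$ the segment $\ls[\vecy_1,\vecy_2]$ lies in $\ell$ by convexity and, being connected, cannot avoid $\vecx$ without landing entirely in one piece; hence it passes through $\vecx$, so every point of $\ell_2$ (and symmetrically of $\ell_1$) lies on the single line through $\vecy_1$ and $\vecx$, and $\ell$ is a compact convex subset of a line. That derivation gets collinearity in one stroke without any mention of dimension. Your proof instead classifies by affine dimension and isolates, as the only substantive step, the fact that a two-dimensional compact convex set has no cut point, proved by rerouting a path through a point $\vecz$ off the line through $\vecx$ and $\vecy$; this detour argument is sound (each of the two legs meets that line only at its endpoint on it, so the broken path avoids the deleted point). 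What your approach buys is a slightly more general and reusable statement --- no planar convex body can be disconnected by removing a point --- at the cost of a case analysis and the apparatus of affine dimension; the paper's approach is shorter and extracts the line directly from the separation hypothesis. Both correctly reduce at the end to the fact that a compact convex subset of a line is a closed segment.
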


\begin{proof} The forward implication is obvious.

Suppose now that the second condition holds. Let $\vecx \in \ell$ denote a point whose removal splits $\ell \setminus \{\vecx\}$ into disjoint sets $\ell_1$ and $\ell_2$. Choose points $\vecy_1 \in \ell_1$ and $\vecy_2 \in \ell_2$. Then $\ls[\vecy_1, \vecy_2]$ lies inside $\ell$. and must pass through $\vecx$ as $\ell_1 \cup \ell_2$ is not connected. Since $y_2$ was an arbitrary element of $\ell_2$, the line through $\vecy_1$ and $\vecx$ contains every element of $\ell_2$ --- and similarly every element of $\ell_1$ must lie on the same line. Thus $\ell$ is a closed convex subset of a line, or in other words, a line segment.
\end{proof}

Recall that a set $U$ is mid-point convex if $\frac{1}{2}(\vecx + \vecy) \in U$ for all $\vecx,
\vecy \in U$.

\begin{lem}
 Suppose that $\Omega \subseteq \mR^2$ is compact and that $\ell = \{\vecx + \lambda \vecv \st 0 \le \lambda \le 1\} \subseteq \Omega$ is a closed line segment. Let $h: \Omega \to \sigma \subseteq \mR^2$ be a homeomorphism. Then $h(\ell)$ is a line segment if and only if it is mid-point convex.
\end{lem}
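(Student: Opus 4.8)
The plan is to reduce the nontrivial implication to the characterization of line segments established in the preceding lemma. The forward direction is immediate: a line segment is convex, and a convex set is certainly mid-point convex, so we need only treat the reverse implication. (If $\vecv = 0$ then $\ell$ and $h(\ell)$ are single points and both conditions hold trivially, so we may assume $\vecv \neq 0$.)

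So suppose $A := h(\ell)$ is mid-point convex. First I would record two structural facts. Since $\ell$ is compact and $h$ is continuous, $A$ is a compact, hence closed, subset of $\mR^2$; moreover $\sigma = h(\Omega)$ is a continuous image of a compact set and so is itself compact, which means $A$ lies inside a compact set to which the preceding lemma can be applied. Secondly, because $h$ restricts to a homeomorphism of $\ell$ onto $A$, and $\ell$ is homeomorphic to $[0,1]$, the set $A$ is an arc.

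The key step is to upgrade mid-point convexity to genuine convexity. For any $\vecu, \vecw \in A$, iterating the mid-point operation shows that every convex combination $(1-t)\vecu + t\vecw$ with $t$ a dyadic rational in $[0,1]$ lies in $A$; since the dyadic rationals are dense in $[0,1]$ and $A$ is closed, letting such $t$ approach an arbitrary $t \in [0,1]$ gives $(1-t)\vecu + t\vecw \in A$. Hence $A$ is convex. I expect this to be the main (and essentially the only) delicate point: mid-point convexity alone does not force convexity for pathological sets, and it is precisely the closedness of $A$ — a consequence of compactness of $\ell$ — that makes the density argument go through.

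Finally I would verify the disconnection hypothesis of the preceding lemma. Choose $\lambda_0$ with $0 < \lambda_0 < 1$ and set $\vecp = h(\vecx + \lambda_0 \vecv)$. Removing the interior point $\vecx + \lambda_0 \vecv$ disconnects $\ell$ into two nonempty relatively open pieces, and since $h|_\ell$ is a homeomorphism, removing $\vecp$ disconnects $A$. Thus $A$ is closed, convex, and can be disconnected by the removal of a single point, so the preceding lemma yields that $A = h(\ell)$ is a closed line segment, completing the proof.
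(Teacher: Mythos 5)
Your proposal is correct and follows essentially the same route as the paper: closedness of $h(\ell)$ from compactness and continuity, the upgrade from mid-point convexity to convexity using closedness, the disconnection-by-a-point property transported through the homeomorphism, and then an appeal to the preceding lemma. The paper states the convexity upgrade and the disconnection step without detail, whereas you spell out the dyadic-rational density argument and the degenerate case, but the underlying argument is identical.
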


\begin{proof}
Again the forward implication is clear.

Now suppose that $h(\ell)$ is mid-point convex. Since $h$ is a homeomorphism, $h(\ell)$ is closed and can be disconnected by a point. Since $h(\ell)$ is closed and mid-point convex, it is convex, and so the result follows from the previous lemma.
\end{proof}

\begin{lem}
 Suppose that $\sigma \subseteq \mR^2$  and that $h: Q \to \sigma$ is a homeomorphism. For $y \in [0,1]$ let $\ell_y = [0,1] \times \{y\}$. If $h(\ell_0)$ is not a line segment, then there exists $\delta > 0$ such that $h(\ell_y)$ is not a line segment for any $y \in [0,\delta]$.
\end{lem}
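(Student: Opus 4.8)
The plan is to reduce the statement to a robustness property of mid-point convexity and then to exploit the uniform continuity of $h$. By the mid-point convexity criterion of the previous lemma, for each fixed $y$ the set $h(\ell_y)$ is a line segment if and only if it is mid-point convex. Hence it suffices to produce a $\delta > 0$ such that $h(\ell_y)$ fails to be mid-point convex for every $y \in [0,\delta]$.

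First I would record a witness to the failure of mid-point convexity at the level $y=0$. Since $h(\ell_0)$ is not a line segment, the same criterion shows it is not mid-point convex, so there are parameters $s,t \in [0,1]$ with $\veca = h(s,0)$ and $\vecb = h(t,0)$ in $h(\ell_0)$ whose midpoint $\vecm = \tfrac{1}{2}(\veca + \vecb)$ does not lie in $h(\ell_0)$. As $h(\ell_0)$ is compact and $\vecm \notin h(\ell_0)$, the distance $d = \dist(\vecm, h(\ell_0))$ is strictly positive.

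Next I would transport this witness to nearby levels. Because $h$ is continuous on the compact set $Q$ it is uniformly continuous, so I can choose $\delta > 0$ with $\norm{h(x,y) - h(x,0)} < d/3$ for all $x \in [0,1]$ and all $y \in [0,\delta]$; the essential feature is that this bound is uniform in $x$, controlling the whole segment $\ell_y$ at once. Fixing such a $y$, set $\veca_y = h(s,y)$ and $\vecb_y = h(t,y)$, which lie in $h(\ell_y)$, with midpoint $\vecm_y = \tfrac{1}{2}(\veca_y + \vecb_y)$. Then $\norm{\vecm_y - \vecm} < d/3$, while for an arbitrary point $\vecz = h(x,y) \in h(\ell_y)$ the triangle inequality gives
\[
  \norm{\vecz - \vecm_y} \ge \norm{h(x,0) - \vecm} - \norm{\vecz - h(x,0)} - \norm{\vecm - \vecm_y} > d - \tfrac{d}{3} - \tfrac{d}{3} = \tfrac{d}{3} > 0.
\]
Thus $\vecm_y \notin h(\ell_y)$, so $h(\ell_y)$ is not mid-point convex, and the previous lemma then shows it is not a line segment.

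The argument is a straightforward $\varepsilon$--$\delta$ estimate once the characterization via mid-point convexity is in hand, so there is no serious obstacle. The one point that genuinely requires care is that the perturbation of the entire image $h(\ell_y)$ away from $h(\ell_0)$ must be controlled uniformly along the segment, which is exactly what uniform continuity (equivalently, compactness of $Q$) supplies; pointwise continuity at the two endpoints $(s,0)$ and $(t,0)$ alone would not rule out some other part of $h(\ell_y)$ bending back to meet $\vecm_y$.
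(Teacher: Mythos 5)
Your proof is correct and follows essentially the same route as the paper: both locate a midpoint witness to the failure of mid-point convexity of $h(\ell_0)$, transport it to the level $y$ using uniform continuity with an $\epsilon/3$-type estimate, and invoke the mid-point convexity characterization from the preceding lemma. The only cosmetic difference is that the paper argues by contradiction (assuming $h(\ell_y)$ is a segment and finding a point $h(t,y)$ equal to the midpoint), whereas you show directly that the transported midpoint is uniformly bounded away from all of $h(\ell_y)$; the estimates are the same.
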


\begin{proof}
 As $h(\ell_0)$ is not a line segment we may choose $x,x' \in [0,1]$ such that $\vecv = \frac{1}{2}(h(x,0)+h(x',0))$ is not an element of $h(\ell_0)$. Let $\epsilon = d(\vecv,h(\ell_0)) > 0$. Now choose $\delta > 0  $ small enough so that if $\vecu,\vecu' \in Q$ with $\norm{\vecu-\vecu'} \le \delta$ then $\norm{h(\vecu)-h(\vecu')} < \epsilon/3$.

Suppose that $0 \le y \le \delta$ and that $h(\ell_y)$ is a line segment. Since $h(\ell_y)$ is mid-point convex, there exists $t \in [0,1]$ such that $h(t,y) = \frac{1}{2}(h(x,y)+h(x',y))$. But then
\begin{align*}
 \norm{h(t,0) - \vecv} & \le \snorm{h(t,0)-h(t,y)}
               + \snorm{h(t,y) - \frac{h(x,y)+h(x',y)}{2}} \\
          & \qquad\qquad\qquad\qquad
               + \snorm{\frac{h(x,y)+h(x',y)}{2} - \frac{h(x,0)+h(x',0)}{2} } \\
   & \le \frac{\epsilon}{3} + 0 + \frac{\epsilon}{3}   < \epsilon
\end{align*}
contradicting that $d(\vecv,h(\ell_0)) = \epsilon$.
\end{proof}

A consequence of this result is that if $h:Q \to \sigma$ is a homeomorphism and there exists a line segment $\ell \in Q$ such that $h(\ell)$ is not a line segment, then we may assume that both $\ell$ and $h(\ell)$ lie in the interiors of their respective sets.

\begin{thm}\label{C2-homs}
 Suppose that $\emptyset \ne \sigma \subseteq \mR^2$ is compact and that $j: \AC(Q) \to \AC(\sigma)$ is an algebra isomorphism with associated homeomorphism $h: Q \to \sigma$. If $h$ is $C^2$ then $h$ maps line segments to line segments.
\end{thm}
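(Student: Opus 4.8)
The plan is to argue by contradiction: suppose $h$ is $C^2$ but maps some line segment $\ell \subseteq Q$ to a set $h(\ell)$ that is not a line segment. By the remark following the previous lemma, I may assume both $\ell$ and $h(\ell)$ lie in the interiors of their respective sets, and after an affine change of coordinates (which by the isometry remarks in Section~\ref{prelims} costs nothing) that $\ell$ is a horizontal segment $[a,b]\times\{0\}$ sitting in the interior of $Q$. The key geometric fact driving the contradiction is the same one exploited in Theorem~\ref{square-circle}: when a homeomorphic image of a segment is \emph{strictly} convex (bulges away from a chord), one can build zig-zag lists $S$ whose curve variation grows like $n$ while the variation factor $\vf(S)$ stays bounded, forcing the variation of a fixed-norm function to blow up and contradicting the boundedness of $\norm{j}$.

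The main work is to convert ``$h(\ell)$ is not a line segment'' into genuine strict convexity that the $C^2$ hypothesis can amplify. First I would parametrize the image curve: writing $\gamma(t) = h(t,0)$ for $t$ in a subinterval, the failure of $h(\ell)$ to be a segment gives, via the midpoint-convexity lemma, a chord whose midpoint misses the curve, so $\gamma$ has nonzero curvature somewhere. Because $h$ is $C^2$, $\gamma$ is a $C^2$ curve and I can locate a short subarc on which the curvature is bounded below by some $\kappa > 0$; on this subarc $\gamma$ lies strictly to one side of each of its chords, with the gap between chord and arc growing quadratically in the arc length. This is exactly the quantitative convexity needed to run the zig-zag estimate.

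Next I would replicate the construction in the proof of Theorem~\ref{square-circle}. Fixing the bulging subarc, for even $n$ I place points $\vecp_k = h(t_k,0)$ at equally spaced parameters $t_k$ along $\ell$, and perturb the odd-indexed points slightly off the segment into the interior of $Q$ to obtain $\tilde{\vecp}_k = h(t_k,\delta_n)$, choosing $\delta_n$ small via uniform continuity so each $\tilde{\vecp}_k$ lands within a controlled ball of $\vecp_k$. Using the strict convexity of the image arc, the resulting list $S_n = [\vecp_0,\tilde{\vecp}_1,\vecp_2,\dots,\vecp_n]$ in $\sigma$ has its points in convex position, so $\vf(S_n) = 2$. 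Applying the isomorphism to the plateau function $f_n(x,y) = \min(y/\delta_n,1)$ (after the affine normalization) produces $g_n = j(f_n)$ with $g_n(\vecp_k) = 0$ for even $k$ and $g_n(\tilde{\vecp}_k) = 1$ for odd $k$, whence $\cvar(g_n, S_n) \ge n$ and
\[
 \norm{g_n}_{\AC(\sigma)} \ge \var(g_n,\sigma) \ge \frac{\cvar(g_n,S_n)}{\vf(S_n)} = \frac{n}{2}.
\]
Since $\norm{f_n}_{\AC(Q)} = 2$ and $j$ is continuous with $\norm{g_n}_{\AC(\sigma)} \le 2\norm{j}$, letting $n \to \infty$ gives the contradiction.

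The hard part will be the geometric step of extracting honest strict convexity of a subarc from the mere failure of $h(\ell)$ to be a segment, and verifying that the perturbed list $S_n$ really does stay in convex position so that $\vf(S_n) = 2$. The $C^2$ hypothesis is what makes this clean: it guarantees the image of $\ell$ is a twice-differentiable curve whose curvature, if nonzero at one point, is bounded away from zero on a neighbourhood, giving uniform one-sided bulging. Without $C^2$ regularity the convex position of the odd perturbations could fail, or the curve could oscillate across its chords, and the clean bound $\vf(S_n)=2$ would be lost; this is precisely where the differentiability assumption earns its place.
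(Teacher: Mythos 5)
Your proposal follows essentially the same route as the paper's proof: use the $C^2$ hypothesis to extract a subarc of $h(\ell)$ that is strictly convex (the paper normalizes by an affine map $\beta$ so that this subarc becomes the graph of a function with $t''>0$, which is your curvature-bounded-below step), and then rerun the zig-zag construction of Theorem~\ref{square-circle} against a plateau function to get $\var(g_n,\sigma)\ge n/2$ while $\norm{j}\,\norm{f_n}_{\AC(Q)}$ stays bounded. The one correction needed is that, since $\ell$ now lies in the \emph{interior} of $Q$, the test function must also be clamped at $0$ for $y<0$ (as in the paper's three-piece definition of $f_n$); with $f_n=\min(y/\delta_n,1)$ alone one has $\norm{f_n}_\infty\ge 1/\delta_n\to\infty$, so the claimed bound $\norm{f_n}_{\AC(Q)}=2$ fails.
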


\begin{proof}
Suppose that there exist $\sigma$, $j$ and $h$ as above such that for some line segment $\ell \subseteq Q$, $h(\ell)$ is not a line segment. We shall show that this leads to a contradiction. In order to streamline the proof, a number of simplifications can be made.

By the above remark we can assume that $\ell$ lies in the interior of $Q$. Since $h$ is $C^2$, $h(\ell)$ has a tangent at each point and the curve can at least locally be considered as the graph of a $C^2$ function of a parametrization of this tangent line. 
Since $h(\ell)$ is not a line segment, one can therefore choose an invertible affine map $\beta: \mR^2 \to \mR^2$ such that there is a subsegment $\ell_0$ of $\ell$ such that 
\begin{enumerate}
 \item $(\beta \circ h)(\ell_0) = \{(s,t(s)) \,:\, 0 \le s \le 1\}$, and 
 \item $t''(s) > 0$ for $0 < s < 1$.
\end{enumerate}

Now choose an invertible affine map $\alpha: \mR^2 \to \mR^2$ such that 
\begin{enumerate}
 \item $\alpha(\ell_0) = [0,1] \times \{0\} \subset [0,1] \times [-1,1] \subseteq \interior{\alpha(Q)}$, and
 \item $(\beta \circ h \circ \alpha^{-1})([0,1] \times [0,1])$ lies above $(\beta \circ h)(\ell_0)$.
\end{enumerate}
We shall write $h_1 = \beta \circ h \circ \alpha^{-1}$ for the homeomorphism from $\alpha(Q)$ to $\beta(\sigma)$, and $C$ for the curve $(\beta \circ h)(\ell_0)$. (See Figure~\ref{C2-homs-pic}.)

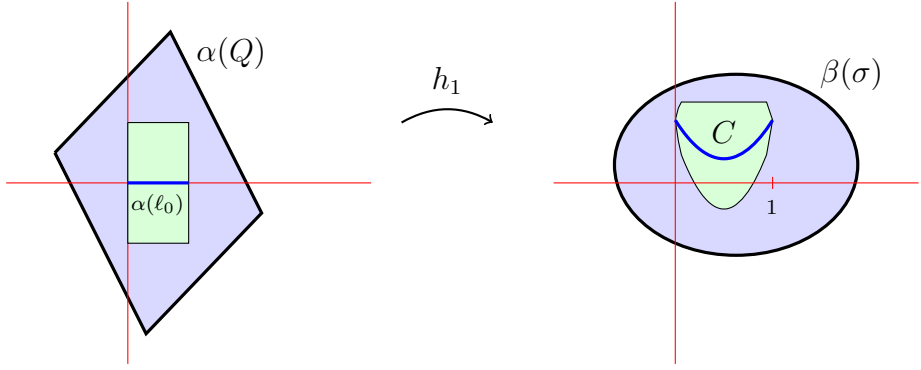
\begin{figure}[!ht]
\begin{center}
\begin{tikzpicture}[scale=0.8,domain=9:10]
%
%

 \fill[blue!15] (-1.2,0.5) -- (0.7,2.5) -- (2.2,-0.5) -- (0.3,-2.5) -- (-1.2,0.5);   
 \draw[very thick,black] (-1.2,0.5) -- (0.7,2.5) -- (2.2,-0.5) -- (0.3,-2.5) -- (-1.2,0.5); 
 \draw[red] (-2,0) -- (4,0) ;   
 \draw[red] (0,-3) -- (0,3);
 \fill[green!15] (0,1) -- (1,1) -- (1,-1) -- (0,-1) -- (0,1);
 \draw[black] (0,1) -- (1,1) -- (1,-1) -- (0,-1) -- (0,1);
 \draw[very thick, blue] (0,0) -- (1,0);


 \draw[black] (1.7,1.7) node[above] {$\alpha(Q)$};
 \draw[black] (0.5,0) node[below] {${}_{\alpha(\ell_0)}$};

  \path[thick,->] (4.5,1) edge [bend left] (6,1);
  \draw[black] (5.25,1.3) node[above] {$h_1$};

 \fill[blue!15] (10,0.3) ellipse (2.0 and 1.5);
 \draw[very thick,black] (10,0.3) ellipse (2.0 and 1.5);  
 \draw[black] (11.2,1.8) node[right] {$\beta(\sigma)$};

 \fill[green!15] (9.00,1.04) -- (9.05,1.24) -- (9.10,1.34) -- (9.15,1.34) -- (9.20,1.34) -- (9.25,1.34) -- (9.30,1.34) -- (9.35,1.34) -- (9.40,1.34) -- (9.45,1.34) -- (9.50,1.34) -- (9.55,1.34) -- (9.60,1.34) -- (9.65,1.34) -- (9.70,1.34) -- (9.75,1.34) -- (9.80,1.34) -- (9.85,1.34) -- (9.90,1.34) -- (9.95,1.34) -- (10.00,1.34) -- (10.00,1.34) -- (10.10,1.34) -- (10.20,1.34) -- (10.20,1.34) -- (10.20,1.34) -- (10.30,1.34) -- (10.40,1.34) -- (10.40,1.34) -- (10.40,1.34) -- (10.50,1.34) -- (10.60,1.04) -- (10.60,1.04) -- (10.60,1.04) -- (10.50,0.465) -- (10.40,0.227) -- (10.40,0.227) -- (10.40,0.227) -- (10.30,0.025) -- (10.20,-0.14) -- (10.20,-0.14) -- (10.20,-0.14) -- (10.10,-0.268) -- (10.00,-0.36) -- (10.00,-0.36) -- (9.95,-0.393) -- (9.90,-0.415) -- (9.85,-0.43) -- (9.80,-0.433) -- (9.75,-0.43) -- (9.70,-0.415) -- (9.65,-0.393) -- (9.60,-0.36) -- (9.55,-0.32) -- (9.50,-0.268) -- (9.45,-0.21) -- (9.40,-0.14) -- (9.35,-0.063) -- (9.30,0.025) -- (9.25,0.12) -- (9.20,0.227) -- (9.15,0.34) -- (9.10,0.465) -- (
9.05,0.697) -- (9.00,1.04);

 \draw[black] (9.00,1.04) -- (9.05,1.24) -- (9.10,1.34) -- (9.15,1.34) -- (9.20,1.34) -- (9.25,1.34) -- (9.30,1.34) -- (9.35,1.34) -- (9.40,1.34) -- (9.45,1.34) -- (9.50,1.34) -- (9.55,1.34) -- (9.60,1.34) -- (9.65,1.34) -- (9.70,1.34) -- (9.75,1.34) -- (9.80,1.34) -- (9.85,1.34) -- (9.90,1.34) -- (9.95,1.34) -- (10.00,1.34) -- (10.00,1.34) -- (10.10,1.34) -- (10.20,1.34) -- (10.20,1.34) -- (10.20,1.34) -- (10.30,1.34) -- (10.40,1.34) -- (10.40,1.34) -- (10.40,1.34) -- (10.50,1.34) -- (10.60,1.04) -- (10.60,1.04) -- (10.60,1.04) -- (10.50,0.465) -- (10.40,0.227) -- (10.40,0.227) -- (10.40,0.227) -- (10.30,0.025) -- (10.20,-0.14) -- (10.20,-0.14) -- (10.20,-0.14) -- (10.10,-0.268) -- (10.00,-0.36) -- (10.00,-0.36) -- (9.95,-0.393) -- (9.90,-0.415) -- (9.85,-0.43) -- (9.80,-0.433) -- (9.75,-0.43) -- (9.70,-0.415) -- (9.65,-0.393) -- (9.60,-0.36) -- (9.55,-0.32) -- (9.50,-0.268) -- (9.45,-0.21) -- (9.40,-0.14) -- (9.35,-0.063) -- (9.30,0.025) -- (9.25,0.12) -- (9.20,0.227) -- (9.15,0.34) -- (9.10,0.465) -- (9.
05,0.697) -- (9.00,1.04);

 \draw[red] (7,0) -- (13,0) ;
 \draw[red] (9,-3) -- (9,3);   
 \draw[red] (10.6,-0.1) -- (10.6,0.1);
 \draw[black] (10.6,-0.1) node[below] {${}_{1}$};
 \draw[black] (9.8,1.2) node[below] {$C$};

 \draw[very thick, blue] (9.00,1.04) -- (9.05,0.962) -- (9.10,0.89) -- (9.15,0.822) -- (9.20,0.76) -- (9.25,0.702) -- (9.30,0.65) -- (9.35,0.602) -- (9.40,0.56) -- (9.45,0.522) -- (9.50,0.49) -- (9.55,0.462) -- (9.60,0.44) -- (9.65,0.422) -- (9.70,0.41) -- (9.75,0.402) -- (9.80,0.40) -- (9.85,0.402) -- (9.90,0.41) -- (9.95,0.422) -- (10.00,0.44) -- (10.00,0.44) -- (10.10,0.49) -- (10.20,0.56) -- (10.20,0.56) -- (10.20,0.56) -- (10.30,0.65) -- (10.40,0.76) -- (10.40,0.76) -- (10.40,0.76) -- (10.50,0.89) -- (10.60,1.04) -- (10.60,1.04);

\end{tikzpicture}
\caption{The homeomorphism $h_1: \alpha(Q) \to \beta(\sigma)$ in the proof of Theorem~\ref{C2-homs}.}
\end{center}\label{C2-homs-pic}
\end{figure}

The proof now mimics that of Theorem~\ref{square-circle}. Let $n \in \mN$ be even. For $0 \le k \le n$, let $\vecp_k = (\frac{k}{n},t(\frac{k}{n}))$ and let $x_k \in [0,1]$ denote the (unique) number such that $h_1(x_k,0) = \vecp_k$.

Choose $\epsilon_n$ small enough so that for all odd $k$, the ball $B(\vecp_k,\epsilon_n)$ lies beneath the  line segment $\ls[\vecp_{k-1},\vecp_{k+1}]$. (This is of course possible by the convexity of the function $t$.)

Now choose $0 < \delta_n < 1$ so that if $\vecx,\vecy \in \alpha(Q)$ with $\norm{\vecx-\vecx'} \le \delta_n$ then $\norm{h_1(\vecx)-h_1(\vecx')} < \epsilon_n$.

For each odd $k$, let $\tilde{\vecp}_k = h_1(x_k,\delta) \in B(\vecp_k,\epsilon)$. Then $\tilde{\vecp}_k$ lies above the curve $C$ but below the chord $\ls[\vecp_{k-1},\vecp_{k+1}]$.
Let $S_n = [\vecp_0,\tilde{\vecp}_1,\vecp_2,\tilde{\vecp}_3,\dots,\linebreak[1]\tilde{\vecp}_{n-1},\vecp_n]$, so that as in the proof of Theorem~\ref{square-circle}, $\vf(S_n) = 2$.

Consider the  map $f_n: \alpha(Q) \to \mR$ defined by
  \[ f_n(x,y) = \begin{cases}
                1,  &\hbox{if $y \ge \delta_n$}, \\
                y/\delta_n,  & \hbox{if $0 \le y < \delta_n$}, \\
                0,  &\hbox{if $y < 0$.}
                 \end{cases}
  \]
Clearly $f_n \in \AC(\alpha(Q))$ with $\norm{f_n}_{\AC(\alpha(Q))} =2$. Again define $g_n: \beta(\sigma) \to \mR$ by $g_n = f_n \circ h_1^{-1}$ to produce a function with $\norm{g_n}_{\AC(\beta(\sigma))} \ge n/2$.
But, noting the remarks at the end of Section~\ref{prelims} about the invariance of variation norms under affine transformations, we have that for all $n$, $\norm{g_n}_{\AC(\beta(\sigma))} \le \norm{j} \norm{f_n}_{\AC(\alpha(Q))} \le 2 \norm{j}$ which is the required contradiction.
\end{proof}

Functions mapping line segments to line segments have been studied by various authors. We refer the reader to \cite{CCDEM} for further details. It is worth noting that such maps need not be affine. For example $h(x,y) = ((x+1)/(y+1),2y/(y+1))$, which maps the unit square to the trapezoid with vertices $(1,0),(2,0),(1,1)$ and $(\frac{1}{2},1)$, is a nonaffine line-segment preserving function.

\section{Half-plane-affine maps}\label{HPAM}

Despite the results of the previous section there are some positive statements that can be made about when $\AC(\sigma_1)$ and $\AC(\sigma_2)$ are isomorphic. As we noted earlier, this is certainly the case if $\sigma_2$ is the image of $\sigma_1$ under an affine homeomorphism $\alpha$. In this section we weaken this condition on the homeomorphism mapping $\sigma_1$ to $\sigma_2$.

\begin{defn} A \textit{half-plane splitting} of $\mR^2$ is a pair of closed half-planes $\{H_1,H_2\}$ whose union is $\mR^2$ and which only intersect along their shared boundary line.
\end{defn}

\begin{defn} An invertible map $\alpha: \mR^2 \to \mR^2$ is said to be a \textit{half-plane-affine map} if there exists a half-plane splitting $\{H_1,H_2\}$ of $\mR^2$ and two affine maps $\alpha_1,\alpha_2:\mR^2 \to \mR^2$ such that $\alpha(\vecx) = \alpha_j(\vecx)$ whenever $\vecx \in H_j$. We shall write $\alpha = \{\alpha_1,\alpha_2\}_{H_1,H_2}$.
\end{defn}

Any half-plane-affine map is clearly continuous. The assumption of invertibility ensures that $\{\alpha(H_1),\alpha(H_2)\}$ is a half-plane splitting of $\mR^2$. We also have the following easy fact.


\begin{lem}\label{hpa-inverse} The inverse of a half-plane-affine map is  a half-plane-affine map.
\end{lem}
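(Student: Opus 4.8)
The plan is to exhibit $\alpha^{-1}$ explicitly in the required form. Writing $\alpha = \{\alpha_1,\alpha_2\}_{H_1,H_2}$ and letting $\ell = H_1 \cap H_2$ denote the common boundary line, the natural candidate for the inverse is
\[
  \beta = \{\alpha_1^{-1},\alpha_2^{-1}\}_{\alpha(H_1),\alpha(H_2)}.
\]
We already know from the remark preceding the lemma that $\{\alpha(H_1),\alpha(H_2)\}$ is a half-plane splitting of $\mR^2$, so three things remain to be checked: that the individual affine maps $\alpha_1,\alpha_2$ are genuinely invertible (so that $\alpha_1^{-1},\alpha_2^{-1}$ are defined affine maps), that $\beta$ is well defined along the shared boundary of $\alpha(H_1)$ and $\alpha(H_2)$, and finally that $\beta$ really is the inverse of $\alpha$.

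The step I expect to be the crux is the invertibility of each $\alpha_j$. A priori we only know that $\alpha$, and hence its restriction $\alpha|_{H_j} = \alpha_j|_{H_j}$, is injective on the half-plane $H_j$; I must upgrade this to invertibility of the full affine map $\alpha_j$ on $\mR^2$. Writing $\alpha_j(\vecx) = A_j \vecx + \vecb_j$, the point is that $A_j$ cannot be singular: if $\vecv \ne 0$ lay in $\ker A_j$, then choosing an interior point $\vecx$ of $H_j$ and a small $t \ne 0$ with $\vecx + t\vecv \in H_j$ would give $\alpha_j(\vecx + t\vecv) = \alpha_j(\vecx)$, contradicting injectivity on $H_j$. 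Thus $A_j$ is invertible and $\alpha_j$ is an invertible affine map, with $\alpha_j^{-1}$ again affine.

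For well-definedness, note that since $\alpha$ is a bijection, $\alpha(H_1)\cap\alpha(H_2) = \alpha(H_1 \cap H_2) = \alpha(\ell)$, which is the line $\alpha_1(\ell)$ and hence exactly the shared boundary of the splitting. Given $\vecy \in \alpha(\ell)$, write $\vecy = \alpha(\vecx)$ with $\vecx \in \ell$; then $\alpha_1(\vecx) = \alpha_2(\vecx) = \vecy$, so $\alpha_1^{-1}(\vecy) = \vecx = \alpha_2^{-1}(\vecy)$ and the two pieces of $\beta$ agree there. It remains to verify $\alpha \circ \beta = \mathrm{id}$ by a short point-chase: given $\vecy \in \alpha(H_j)$, we have $\vecy = \alpha(\vecx')$ for some $\vecx' \in H_j$, whence $\alpha_j(\vecx') = \vecy$ and so $\alpha_j^{-1}(\vecy) = \vecx' \in H_j$ by injectivity of $\alpha_j$; therefore $\alpha(\beta(\vecy)) = \alpha(\alpha_j^{-1}(\vecy)) = \alpha(\vecx') = \vecy$. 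Since $\alpha$ is a bijection this forces $\beta = \alpha^{-1}$, and as $\beta$ has the form $\{\alpha_1^{-1},\alpha_2^{-1}\}_{\alpha(H_1),\alpha(H_2)}$ it is a half-plane-affine map, completing the argument.
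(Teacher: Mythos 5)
Your proof is correct and complete; the paper itself offers no proof, stating the lemma only as an ``easy fact,'' and your argument (establishing invertibility of each $\alpha_j$ from injectivity on a half-plane with nonempty interior, checking agreement of $\alpha_1^{-1}$ and $\alpha_2^{-1}$ on the image boundary line, and verifying $\alpha\circ\beta=\mathrm{id}$) is exactly the natural way to fill in the details. In particular you correctly identify and handle the one point that genuinely needs an argument, namely that the affine pieces $\alpha_j$ are invertible on all of $\mR^2$ and not merely injective on $H_j$.
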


Suppose for the remainder of this section that $\alpha= \{\alpha_1,\alpha_2\}_{H_1,H_2}$ is a half-plane-affine map.

We shall show below that for such maps $\BV(\sigma) \simeq \BV(\alpha(\sigma))$. The main point in proving this is showing that given any finite ordered list $S$ of elements of $\sigma$, $\vf(S)$ is comparable to $\vf(\alpha(S))$. Heuristically, if the number of times that the curve $\gamma_S$ crosses  a line $\ell$ is $k$, then $\gamma_{\alpha(S)}$ should cross either $\alpha_1(\ell)$ or $\alpha_2(\ell)$ at least $\frac{k}{2}$ times. Proving this is  a little delicate however because in the case that a segment $s_i = \ls[\vecx_i,\vecx_{i+1}]$  has at least one of its endpoints on the line $\ell$, it is possible that $s_i$ is a crossing segment on $\ell$ but that $\ls[\alpha(\vecx_i),\alpha(\vecx_{i+1})]$ is not a crossing segment on either $\alpha_1(\ell)$ or $\alpha_2(\ell)$.

\begin{lem}\label{hpa-vf}
Let $\alpha: \mR^2 \to \mR^2$ be a half-plane affine map. Suppose that $S = [\vecx_0,\dots,\vecx_n]$ is a finite ordered list of points in $\mR^2$ and that ${\hat S} = [\vecv_0,\dots,\vecv_n]$ is the list of the images of these points under $\alpha$. Then
  \[ \frac{1}{2} \vf(S) \le \vf({\hat S}) \le 2 \vf(S). \]
\end{lem}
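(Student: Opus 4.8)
The plan is to prove the single inequality $\vf(\hat S)\ge\tfrac12\vf(S)$ (equivalently $\vf(S)\le 2\vf(\hat S)$) for \emph{every} half-plane-affine map and every list; the reverse inequality then comes for free by applying this to the map $\alpha^{-1}$, which is half-plane-affine by Lemma~\ref{hpa-inverse}, and to the list $\hat S$, whose image under $\alpha^{-1}$ is $S$. Since $\vf$ is unchanged when an invertible affine map is applied simultaneously to a list and to the ambient plane (such maps carry lines to lines, preserving incidence and the two open sides, and the correspondence of segments is index by index), I would first reduce to a normal form. Composing $\alpha$ with suitable affine maps on the domain and codomain, I may assume that the dividing line is $L=\{x=0\}$, that $H_1=\{x\le 0\}$ and $H_2=\{x\ge 0\}$, that $\alpha_1=\mathrm{id}$, and hence (using that $\alpha_1=\alpha_2$ on $L$, together with invertibility) that $\alpha_2(x,y)=(ax,cx+y)$ for some $a>0$ and $c\in\mR$.

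Now fix a line $\ell$ and set $m_1=\alpha_1(\ell)=\ell$ and $m_2=\alpha_2(\ell)$; when $\ell$ is transversal to $L$ these are two lines meeting at the single point $O=\alpha(\ell\cap L)$. I would prove the per-line estimate $\vf(S,\ell)\le\vf(\hat S,m_1)+\vf(\hat S,m_2)$; taking the maximum over $\ell$ and bounding the right-hand side by $2\vf(\hat S)$ then finishes the proof. The geometric engine consists of two observations. First, since $\alpha_2$ is affine and so preserves sides, every $\vecx_i\in H_1$ keeps its side relative to $\ell$ when viewed relative to $m_1=\ell$, while every $\vecx_i\in H_2$ keeps its side relative to $\ell$ when viewed relative to $m_2=\alpha_2(\ell)$; thus each point of $S$ retains its $\ell$-sign relative to at least one of $m_1,m_2$. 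Second, because $\alpha_2$ fixes $L$ pointwise, for a point of $L$ lying on the positive side of $m_1$ is equivalent to lying on the ($\alpha_2$-induced) positive side of $m_2$; this \emph{fold-side agreement} is exactly what lets a segment straddling $L$ be caught by one of the two lines.

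It is convenient to split $\vf(S,\ell)$ into its strict part, coming from crossing segments of type (i) in Definition~\ref{crossing-defn}, and its tangential part, coming from types (ii)--(iv), which together count the maximal runs of consecutive vertices lying on $\ell$. For a strict crossing $\ls[\vecx_i,\vecx_{i+1}]$ I would show that $\ls[\vecv_i,\vecv_{i+1}]$ is again a strict crossing of $\hat S$ on $m_1$ or on $m_2$: if both endpoints lie in one closed half-plane this is immediate from the affine invariance above; if the segment straddles $L$, then $\ls[\vecv_i,\vecv_{i+1}]$ meets $L$ at a point $\vecu$, and the fold-side agreement forces $\vecv_i,\vecv_{i+1}$ onto strictly opposite sides of $m_1$ (when $\vecu$ lies on the far side of $m_1$) or of $m_2$ (otherwise), the single degenerate configuration $\vecu=O$ being dispatched by hand. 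The case $\ell\parallel L$ is easier still, every strict crossing passing directly to $m_1$. This delivers the strict part of the per-line estimate with no loss.

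The main obstacle is the tangential part, where a naive index-by-index transfer genuinely fails, as anticipated in the remarks preceding the statement: a crossing segment with an endpoint on $\ell$ whose neighbour lies across the fold can map to a segment that is a crossing of $\hat S$ on neither $m_1$ nor $m_2$, because that neighbour's image may land spuriously on $m_1=\ell$. A small explicit example shows that an entire run of vertices on $\ell$ can vanish from $m_1$ while re-emerging as strict crossings on $m_2$, so the estimate cannot be established separately for the tangential and strict parts; the loss incurred on one line is compensated on the other, and only the combined quantity $\vf(\hat S,m_1)+\vf(\hat S,m_2)$ is controlled. The crux is thus a careful, non-index-wise accounting of how the maximal $\ell$-runs of $S$ redistribute between $m_1$ and $m_2$ in a neighbourhood of the fold point $O$, showing that what is lost relative to $\ell$ is never more than what is gained on the complementary line; the factor of $2$ is precisely the slack that absorbs this redistribution. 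I expect this tangential analysis near the fold, rather than the strict transfer, to be the technical heart of the proof.
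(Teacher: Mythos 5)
Your overall strategy coincides with the paper's: reduce to the single inequality $\vf(\hat S)\ge\tfrac12\vf(S)$ via Lemma~\ref{hpa-inverse}, fix a line $\ell$ realising $\vf(S)$, and establish the per-line count $\vf(S,\ell)\le \vf(\hat S,\alpha_1(\ell))+\vf(\hat S,\alpha_2(\ell))$, from which the factor $2$ follows by taking the larger summand. Your treatment of strict crossings and of the case where $\alpha(\ell)$ is again a line is fine, and the normalisation to $\alpha_1=\mathrm{id}$, $\alpha_2(x,y)=(ax,cx+y)$ is a harmless (if unnecessary) preliminary.

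There is, however, a genuine gap at exactly the point you flag as ``the crux'': you assert, but do not prove, that the tangential crossings lost on $m_1$ are recovered on $m_2$, and you frame this as a global, ``non-index-wise accounting'' of how runs of vertices on $\ell$ redistribute near the fold. No such global bookkeeping is needed; the compensation is purely local, and supplying it is what actually closes the argument. In the problematic configuration --- say $s_i=\ls[\vecx_i,\vecx_{i+1}]$ is a crossing segment by rule (iii), with $\vecx_i\in\ell\cap H_1$, $\vecx_{i-1}\in H_2\setminus\ell$, and $\vecv_{i-1}$ landing spuriously on $\alpha_1(\ell)$ --- one first notes that $\vecv_{i-1}\notin\alpha_2(\ell)$ (as $\vecx_{i-1}\in H_2\setminus\ell$ and $\alpha_2$ is injective), and then splits on whether $\vecx_i$ equals the fold point $\vecw=\ell\cap\partial H_1$. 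If $\vecx_i=\vecw$ then $\vecv_i\in\alpha_2(\ell)$ and ${\hat s}_i$ itself is a crossing segment on $\alpha_2(\ell)$. If $\vecx_i\ne\vecw$ then $\vecv_i$ and $\vecv_{i-1}$ lie on the two distinct rays of $\alpha_1(\ell)$ emanating from $\alpha(\vecw)$, hence on strictly opposite sides of $\alpha_2(\ell)$, so the \emph{adjacent} segment ${\hat s}_{i-1}$ is a strict crossing on $\alpha_2(\ell)$. The decisive observation is that $s_{i-1}$ was never a crossing segment of $S$ on $\ell$ (its endpoint $\vecx_{i-1}$ is off $\ell$, its endpoint $\vecx_i$ is on $\ell$, and $i-1\ne n-1$ since $s_i$ exists), so this compensating segment is ``free'' and the count $k_1+k_2\ge k$ incurs no double-counting; rule (iv) is handled by the same local argument. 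Without this step --- in particular without the role played by the non-crossing neighbour $s_{i-1}$ --- your per-line inequality remains unproved.
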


\begin{proof} By the previous lemma it suffices to just prove the left-hand inequality.

Let $k = \vf(S)$ and fix a line $\ell$ such that $\vf(S,\ell) = k$.
For $0 \le i \le n-1$ let $s_i = \ls[\vecx_i,\vecx_{i+1}]$ and let ${\hat s}_i = \ls[\vecv_i,\vecv_{i+1}]$. Our aim is to find a correspondence between crossing segments of $S$ on $\ell$ and crossing segments of ${\hat S}$ on either $\alpha_1(\ell)$ or $\alpha_2(\ell)$. 

If ${\hat \ell} = \alpha(\ell)$ is also a line then $s_i \in X(S,\ell)$ if and only if ${\hat s}_i \in X({\hat S},{\hat \ell})$ and so $\vf(\hat S) \ge \vf(S)$ which certainly gives the required inequality.
This occurs in particular if $\ell$ is parallel to the boundary line between $H_1$ and $H_2$.

If ${\hat \ell}$ is not a line, then $\alpha_1(\ell)$ and $\alpha_2(\ell)$ do not coincide, and 
there must exist a unique point $\vecw \in \ell$ that lies on the shared boundary of $H_1$ and $H_2$. 

%
%
Suppose that $s_i  \in X(S,\ell)$. If $\vecx_i$ and $\vecx_{i+1}$ lie strictly on opposite sides of $\ell$, then $\vecv_i$ and $\vecv_{i+1}$ lie strictly on opposite sides of $\hat \ell$, and so ${\hat s}_i$ is a crossing segment for $\hat S$ for at least one of $\alpha_1(\ell)$ or $\alpha_2(\ell)$. The more difficult situation is if one of the endpoints of $s_i$ lies on $\ell$. Referring to Definition~\ref{crossing-defn} we have the following possibilities:
\begin{enumerate}[(i)]
 \item $i=0$ and $\vecx_i \in \ell$. Then $\vecv_0 \in {\hat \ell}$ and hence ${\hat s}_0$ is a crossing segment of $\hat S$ on at least one of $\alpha_1(\ell)$ or $\alpha_2(\ell)$.
 \item\label{case ii}
$i > 0$, $\vecx_i \in \ell$ and $\vecx_{i-1} \not\in \ell$. Note that in this case $s_{i-1} \not\in X(S,\ell)$.

Without loss of generality we may label the half-planes so that $\vecx_i \in H_1$. Now ${\hat s}_i$ is a crossing segment of $\hat S$ on $\alpha_1(\ell)$ except in the case that $\vecv_{i-1} \in \alpha_1(\ell)$ (see Figure~\ref{hpam-vf-proof}). If $\vecv_{i-1} \in \alpha_1(\ell)$ then, as $\alpha_1(\ell)$ and $\alpha_2(\ell)$ do not coincide, $\vecx_{i-1} \in H_2$ and $\vecv_{i-1} \not \in \alpha_2(\ell)$. 
We must now distinguish the case when $\vecx_i$ lies in the interior of $H_1$ and the case when $\vecx_i$ lies in the boundary of $H_1$.

If $\vecx_i = \vecw$ then $\vecv_i \in \alpha_2(\ell)$ so since $\vecv_{i-1} \not \in \alpha_2(\ell)$ we have ${\hat s}_i \in X({\hat S},\alpha_2(\ell))$.
If $\vecx_i \ne \vecw$, then (as in Figure~\ref{hpam-vf-proof})
 $\vecv_{i-1}$ and $\vecv_i$ lie on opposite sides of $\alpha_2(\ell)$ and hence ${\hat s}_{i-1} \in X({\hat S},\alpha_2(\ell))$. 

\begin{figure}[!ht]
\begin{center}
\begin{tikzpicture}[scale=0.8]
%
%
 \draw[thick,black] (2,0) -- (3,5);
 \draw[thick,red]  (0,4) -- (5,1);
 \draw[thick,blue] (1,3.4) -- (2,2.8) -- (4,3.2);
 \draw[blue] (1,3.4) node[circle, draw, fill=black!50,inner sep=0pt, minimum width=4pt] {};
 \draw[blue] (2,2.8) node[circle, draw, fill=black!50,inner sep=0pt, minimum width=4pt] {};
 \draw[blue] (4,3.2) node[circle, draw, fill=black!50,inner sep=0pt, minimum width=4pt] {};

 \draw (1,3.4) node[below] {$\vecx_{i+1}$};
 \draw (2,2.8) node[below] {$\vecx_i$};
 \draw (4,3.2) node[right] {$\vecx_{i-1}$};
 \draw (0.5,0) node[above] {$H_1$};
 \draw (3.5,0) node[above] {$H_2$};
 \draw (4.5,1.4) node[above] {$\ell$};
%
%
  \path[thick,->] (6,4) edge [bend left] (7.5,4);
  \draw[black] (6.75,4.3) node[above] {$\alpha$};
%
%
 \draw[thick,black] (11,0) -- (12,5);
 \draw[thick,red]  (8,2.5) -- (11.5,2.5);
 \draw[thick,red] (11.5,2.5) -- (14,1);
 \draw[thick,red,dashed]  (11.5,2.5) -- (15,2.5);
 \draw[thick,red,dashed]  (9,4) -- (11.5,2.5);
 \draw[thick,blue] (9.5,2.5) -- (10.5,2.5) -- (13.5,2.5);
 \draw[blue] (9.5,2.5) node[circle, draw, fill=black!50,inner sep=0pt, minimum width=4pt] {};
 \draw[blue] (10.5,2.5) node[circle, draw, fill=black!50,inner sep=0pt, minimum width=4pt] {};
 \draw[blue] (13.5,2.5) node[circle, draw, fill=black!50,inner sep=0pt, minimum width=4pt] {};
 \draw (9.5,2.5) node[below] {$\vecv_{i+1}$};
 \draw (10.5,2.5) node[below] {$\vecv_i$};
 \draw (13.5,2.5) node[above] {$\vecv_{i-1}$};
 \draw (14,2) node[below] {$\alpha_2(\ell)$};
 \draw (8,2.5) node[above] {$\alpha_1(\ell)$};
 \draw (9.5,0) node[above] {$\alpha(H_1)$};
 \draw (12.5,0) node[above] {$\alpha(H_2)$};
\end{tikzpicture}
\caption{Mapping of crossing segments in the proof of Lemma~\ref{hpa-vf}.}\label{hpam-vf-proof}
\end{center}
\end{figure}
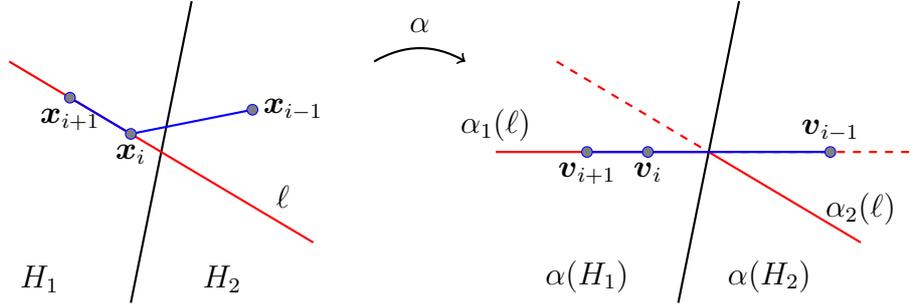

Thus, while only one of $s_{i-1}$ and $s_i$ are crossing segments of $S$ on $\ell$, at least one of ${\hat s}_{i-1}$ and ${\hat s}_i$ is a crossing segment of $\hat S$ on either $\alpha_1(\ell)$ or $\alpha_2(\ell)$.
  \item $i=n-1$, $\vecx_i \not\in \ell$ and $\vecx_{i+1} \in \ell$. Again we may assume that $\vecx_n \in H_1$ and hence that $\vecv_n \in \alpha_1(\ell)$.

If $\vecv_{n-1}  \not\in \alpha_1(\ell)$ then clearly ${\hat s}_{n-1} \in X({\hat S},\alpha_1(\ell))$. On the other hand, if $\vecv_{n-1} \in \alpha_1(\ell)$ then one may argue as in (\ref{case ii}) that ${\hat s}_{n-1} \in X({\hat S},\alpha_2(\ell))$.
%
As before then, ${\hat s}_{n-1}$ is a crossing segment of $\hat S$ on at least one of $\alpha_1(\ell)$ or $\alpha_2(\ell)$.
\end{enumerate}
Let $k_1$ and $k_2$ be the number of crossing segments of $\hat S$ on 
$\alpha_1(\ell)$ and $\alpha_2(\ell)$ respectively. Then the above discussion shows that $k_1+k_2 \ge k$. It follows therefore that $\vf({\hat S}) \ge \frac{k}{2}$ as required.
\end{proof}

\begin{thm}\label{hpam-isoms} Let $\alpha: \mR^2 \to \mR^2$ be a half-plane affine map.
Suppose that $\sigma_1$ is a nonempty compact subset of $\mR^2$ and that $\sigma_2 = \alpha(\sigma_1)$. Then $\BV(\sigma_1) \simeq \BV(\sigma_2)$ and $\AC(\sigma_1) \simeq \AC(\sigma_2)$.
\end{thm}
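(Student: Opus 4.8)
The plan is to build the isomorphism between $\BV(\sigma_1)$ and $\BV(\sigma_2)$ explicitly as the composition-with-$\alpha^{-1}$ map, and then show it restricts correctly to the $\AC$ subalgebras. Define $T: \BV(\sigma_1) \to \BV(\sigma_2)$ by $T(f) = f \circ \alpha^{-1}$, where I regard $\alpha$ as restricted to the relevant sets so that $\alpha: \sigma_1 \to \sigma_2$ is a bijection. Since $\alpha$ is a homeomorphism, $T$ is a well-defined algebra isomorphism at the level of all functions $\sigma_1 \to \mC$, with inverse $f \mapsto f \circ \alpha$; it is clearly multiplicative, linear, and preserves the sup norm, since $\norm{T(f)}_\infty = \norm{f}_\infty$. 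The entire content of the theorem is therefore the claim that $T$ and $T^{-1}$ map the respective $\BV$ spaces \emph{into} one another with control of the norms, and this is exactly where Lemma~\ref{hpa-vf} enters.

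First I would establish the key norm comparison on $\BV$. Fix $f \in \BV(\sigma_1)$ and set $g = T(f) = f \circ \alpha^{-1}$. For any finite ordered list $S = [\vecx_0,\dots,\vecx_n]$ of points of $\sigma_1$, let $\hat S = [\alpha(\vecx_0),\dots,\alpha(\vecx_n)]$ be its image list in $\sigma_2$. Because $\alpha$ is a bijection between the two sets and $g(\alpha(\vecx_i)) = f(\vecx_i)$, the curve variations match exactly: $\cvar(g,\hat S) = \cvar(f,S)$. By Lemma~\ref{hpa-vf}, $\vf(\hat S) \le 2\,\vf(S)$, so
\[
  \frac{\cvar(g,\hat S)}{\vf(\hat S)} \ge \frac{\cvar(f,S)}{2\,\vf(S)}.
\]
Taking the supremum over all lists $S$ (and noting that every list of points in $\sigma_2$ arises as $\hat S$ for some list in $\sigma_1$, since $\alpha$ is a bijection) yields $\var(g,\sigma_2) \ge \tfrac{1}{2}\var(f,\sigma_1)$ and, using the other inequality in Lemma~\ref{hpa-vf}, $\var(g,\sigma_2) \le 2\,\var(f,\sigma_1)$. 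In particular $g \in \BV(\sigma_2)$ with $\var$ controlled by a factor of $2$ on each side. Combining with the isometry in sup norm, $T$ is a bounded algebra isomorphism of $\BV(\sigma_1)$ onto $\BV(\sigma_2)$ with bounded inverse (the inverse handled identically via $\alpha^{-1}$, which is again half-plane-affine by Lemma~\ref{hpa-inverse}).

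For the $\AC$ statement I would argue that $T$ carries the dense subalgebra onto the dense subalgebra. The cleanest route is to show $T(\Pol_2(\sigma_1)) \subseteq \AC(\sigma_2)$ and symmetrically $T^{-1}(\Pol_2(\sigma_2)) \subseteq \AC(\sigma_1)$. A polynomial $p \in \Pol_2$ composed with $\alpha^{-1}$ is, on each half-plane piece $\alpha_j(H_j) \cap \sigma_2$, a polynomial (since $\alpha_j^{-1}$ is affine); thus $T(p)$ is continuous and piecewise-polynomial across the splitting line, so in particular it lies in $\CTPP(\sigma_2) \subseteq \AC(\sigma_2)$, and it is certainly of bounded variation. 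Since $T$ is bounded with respect to the $\BV$ norms, it maps the $\BV$-closure of $\Pol_2(\sigma_1)$ into the $\BV$-closure of $T(\Pol_2(\sigma_1)) \subseteq \AC(\sigma_2)$, giving $T(\AC(\sigma_1)) \subseteq \AC(\sigma_2)$; applying the same reasoning to $T^{-1}$ gives the reverse inclusion, so $T$ restricts to an algebra isomorphism $\AC(\sigma_1) \simeq \AC(\sigma_2)$.

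The main obstacle I anticipate is the bookkeeping around whether $T(p)$ genuinely lands in $\AC(\sigma_2)$ for $p$ a polynomial — one must verify that piecewise-affine composition produces a function that is continuous across the splitting line (automatic, since $\alpha$ is continuous and the two affine pieces agree on the boundary) and that this function is captured by one of the known dense classes such as $\CTPP(\sigma_2)$. A subtler point is confirming that the set $\{\hat S\}$ really exhausts all finite lists in $\sigma_2$ so that the supremum defining $\var(g,\sigma_2)$ is genuinely bounded below by $\tfrac{1}{2}\var(f,\sigma_1)$; this is immediate from bijectivity of $\alpha$ but deserves an explicit remark. The variation-norm estimates themselves are routine once Lemma~\ref{hpa-vf} is in hand.
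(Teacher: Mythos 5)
Your construction of the isomorphism and the entire $\BV$ part of your argument coincide with the paper's proof: the map is $f \mapsto f \circ \alpha^{-1}$, curve variations are preserved exactly, Lemma~\ref{hpa-vf} gives the two-sided comparison of variation factors and hence $\tfrac{1}{2}\var(f,\sigma_1) \le \var(f\circ\alpha^{-1},\sigma_2) \le 2\var(f,\sigma_1)$, and surjectivity with a bounded inverse follows from Lemma~\ref{hpa-inverse}. That part is correct.

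The one step that fails is in the $\AC$ part. You claim that for $p \in \Pol_2$ the function $T(p) = p \circ \alpha^{-1}$ is piecewise polynomial across the splitting line ``so in particular it lies in $\CTPP(\sigma_2)$''. But $\CTPP$ consists of functions that are continuous and piecewise \emph{planar} (i.e.\ affine) on a triangulation, not piecewise polynomial: for $\deg p \ge 2$ the function $p \circ \alpha_j^{-1}$ is a polynomial of degree $\ge 2$ on each half-plane and is not planar on any triangle, so $T(p) \notin \CTPP(\sigma_2)$. Nor is $T(p)$ in $C^1(\sigma_2)$ in general, since its gradient jumps across the splitting line. Thus neither of the dense classes quoted in Section~\ref{prelims} captures $T(p)$ directly, and the inclusion $T(\Pol_2(\sigma_1)) \subseteq \AC(\sigma_2)$ is not established as written. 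The repair is exactly the route the paper takes: use $\CTPP(\sigma_1)$ rather than $\Pol_2(\sigma_1)$ as the dense subalgebra. A continuous piecewise planar function composed with the piecewise affine $\alpha^{-1}$ is again continuous and piecewise planar (refine the triangulation by the splitting line), so $T$ maps $\CTPP(\sigma_1)$ onto $\CTPP(\sigma_2)$; since $T$ is a bounded bijection between the $\BV$ spaces, it carries the closure $\AC(\sigma_1)$ onto the closure $\AC(\sigma_2)$. With that substitution your proof is complete and is essentially the paper's.
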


\begin{proof} Suppose that $\alpha = \{\alpha_1,\alpha_2\}_{H_1,H_2}$ and that $\ell_0$ is the boundary line between $H_1$ and $H_2$.
For $f \in \BV(\sigma_1)$ let $\hat f: \sigma_2 \to \mR$ be defined by
  \[ \hat f(\alpha(\vecx)) = f(\vecx), \qquad \vecx \in \sigma_1.\]
The first step is to show that $\hat f \in \BV(\sigma_2)$.

By the previous lemma
  \[ \frac{\cvar(\hat f,{\hat S})}{\vf(\hat S)} \le 2 \frac{\cvar(f,S)}{\vf(S)} \le 2 \var(f,\sigma_1). \]
Taking the supremum over all such finite lists $\hat S$ shows that
  \[ \ssnorm{\hat f}_{\BV(\sigma_2)} \le 2 \norm{f}_{\BV(\sigma_1)} \]
and in particular that $\hat f \in \BV(\sigma_2)$. Let $j: \BV(\sigma_1) \to \BV(\sigma_2)$ be defined by $j(f) = \hat f$. It is clear that $j$ is a continuous algebra homomorphism. Lemma~\ref{hpa-inverse} can now be used to deduce that $j$ is also onto and hence that $j$ is a Banach algebra isomorphism.

Suppose that $g \in \CTPP(\sigma_1)$. Then $j(g)$ will also be planar on polygonal regions (on a neighbourhood) of $\sigma_2$. Indeed $j$ is a bijection from $CTPP(\sigma_1)$ to $CTPP(\sigma_2)$ and hence $j$ is also a bijection between the closures of these sets, $\AC(\sigma_1)$ and $\AC(\sigma_2)$.
\end{proof}

As the example below shows, the factor of $2$ in the above proof is necessary.

\begin{ex}
 Suppose that $\sigma_1$ is the nonconvex quadrilateral
with vertices at $(1,0)$, $(0,4)$, $(-1,0)$ and $(0,2)$. Note that $\sigma_1$ is the image of the closed unit square under a half-plane-affine map.
Theorem~\ref{hpam-isoms} then  implies that $\AC(Q) \simeq \AC(\sigma_1)$.

Define
  \[ f(x,y) = \max(1-y,0), \qquad (x,y) \in \sigma_1. \]
Then, as $f$ only varies in the $y$ direction, it is clear that $\norm{f}_\infty = 1$, that $\var(f,\sigma_1) = 1$ and hence that $\norm{f}_{\BV(\sigma_1)} = 2$. Now write $f = f_1+f_2$ where $f_1(x,y) = f(x,y) \chi_{[-1,0]}(x)$ and $f_2(x,y) = f(x,y) \chi_{[0,1]}(x)$. Note  that both $f_1$ and $f_2$ are in $\CTPP(\sigma_1) \subseteq \AC(\sigma_1)$ and that these functions have disjoint supports.

Suppose now that $j: \AC(\sigma_1) \to \AC(Q)$ is a Banach algebra isomorphism, with associated homeomorphism $h: \sigma_1 \to Q$.
Let $g_1 = j(f_1)$, $g_2 = j(f_2)$ and $g = j(f)$. Using Theorem~\ref{alg_props}(\ref{pt1}), we can choose points $\vecz_1,\vecz_2 \in Q$ such that $g_k(\vecz_\ell) = \delta_{k\ell}$. Let $\gamma$ denote the line segment joining $\vecz_1$ and $\vecz_2$. Then $h^{-1}(\gamma)$ is a continuous curve in $\sigma_1$ which necessarily passes though some point $(0,y) \in \sigma_1$. Let $\vecw = h(0,y)$ be the corresponding point on $\gamma$. Then $g(\vecw) = g_1(\vecw)+g_2(\vecw) = f_1(0,y) + f_2(0,y) = 0$.
But this implies that $\cvar(g,\gamma) \ge 2$ and hence $\var(g,\sigma_2) \ge 2$. By Corollary~\ref{pres_var} we see that $j$ can not be isometric. In particular, this example shows that factor of 2 that appears in the proof of Theorem~\ref{hpam-isoms} is necessary.
\end{ex}

\begin{rem}\label{triangle-square}
A simple adjustment to the above example, replacing the vertex $(0,2)$ with the point $(0,0)$, shows that if $T$ is a closed triangular region in $\mR^2$, then $\AC(T) \simeq \AC(Q)$. Thus isomorphism class does not distinguish between the number of vertices in polygonal regions. We shall come back to this issue later in the paper.
\end{rem}

\section{Locally piecewise affine maps}

The results of the previous section can be extended to cover homeomorphisms of the plane made up from more than two affine maps.

Let $\alpha: \mR^2 \to \mR^2$ be an invertible affine map, and let $C$ be a convex $n$-gon. Then $\alpha(C)$ is also a convex $n$-gon. Denote the sides of $C$ by $s_1,\dots,s_n$. Suppose that $\vecx_0 \in \interior{C}$. The point $\vecx_0$ determines a triangulation $T_1,\dots,T_n$ of $C$, where $T_j$ is the (closed) triangle with side $s_j$ and vertex $\vecx_0$. A point $\vecy_0 \in \interior{\alpha(C)}$ determines a similar triangularization $\hat{T}_1,\dots,\hat{T}_n$ of $\alpha(C)$, where the numbering is such that $\alpha(s_j) \subseteq \hat{T}_j$. The following fact is then clear.


\begin{lem}\label{vertex-move} With the notation as above, there is a unique map $h: \mR^2 \to \mR^2$ such that
\begin{enumerate}
 \item $h(\vecx) = \alpha(\vecx)$ for $\vecx \not\in \interior{C}$,
 \item $h$ maps $T_j$ onto $\hat{T}_j$, for $1 \le j \le n$.
 \item $\alpha_j = h|T_j$ is affine, for $1 \le j \le n$.
 \item $h(\vecx_0) = \vecy_0$.
\end{enumerate}
\end{lem}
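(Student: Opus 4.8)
The plan is to construct $h$ explicitly on each triangle and then verify that the pieces glue together continuously into a well-defined homeomorphism, deferring uniqueness to the end. The key observation is that an affine map of the plane is determined by its values on three points in general position, and the three vertices of each triangle $T_j$ (namely $\vecx_0$ and the two endpoints of the side $s_j$) are in general position precisely because $\vecx_0 \in \interior{C}$. So for each $j$ I would define $\alpha_j$ to be the unique affine map sending $\vecx_0 \mapsto \vecy_0$ and sending the two endpoints of $s_j$ to the images of those endpoints under $\alpha$. This immediately gives properties (3) and (4), and forces $\alpha_j = \alpha$ on all of the edge $s_j$ (since $\alpha_j$ and $\alpha$ agree on the two endpoints of $s_j$, hence on the whole segment by affineness).

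The crux is continuity along the shared triangle boundaries, which I would establish in two steps. First, along the ray from $\vecx_0$ through a vertex of $C$ --- the common edge of two adjacent triangles $T_j$ and $T_{j+1}$ --- the two affine maps $\alpha_j$ and $\alpha_{j+1}$ must be shown to agree. They agree at $\vecx_0$ by construction, and they agree at the shared vertex of $C$ because $\alpha_j$ and $\alpha_{j+1}$ each send that vertex to its $\alpha$-image (the shared vertex is an endpoint of both $s_j$ and $s_{j+1}$). Two affine maps agreeing at two distinct points agree on the whole line through them, so they match on the common edge. Second, I would define $h$ to equal $\alpha$ outside $\interior{C}$ and $\alpha_j$ on $T_j$; continuity across $\partial C$ follows because $\alpha_j = \alpha$ on $s_j$ as noted above. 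Thus the piecewise definition yields a well-defined continuous map satisfying (1)--(4).

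To see that $h$ is a bijection (a homeomorphism), I would argue that $h$ maps each $T_j$ bijectively onto $\hat T_j$: since $\alpha_j$ sends the three vertices of $T_j$ to the three vertices of $\hat T_j$ (these being $\vecy_0$ and the two endpoints of $\alpha(s_j)$, which are the vertices of $\hat T_j$), and an affine map carrying a nondegenerate triangle's vertices to another nondegenerate triangle's vertices is an affine isomorphism between the triangles. Because the $\hat T_j$ triangulate $\alpha(C)$ with disjoint interiors and matching shared edges, the pieces assemble into a bijection $C \to \alpha(C)$; combined with $h = \alpha$ (a bijection) off $\interior C$, this gives a continuous bijection of $\mR^2$, which is a homeomorphism by invariance of domain or by exhibiting the inverse as the analogous piecewise-affine map.

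For uniqueness, I would note that conditions (1)--(4) pin $h$ down: on $T_j$ the restriction must be affine by (3), and (2) forces it to carry the vertices of $T_j$ to the vertices of $\hat T_j$ --- specifically $\vecx_0 \mapsto \vecy_0$ by (4), and each endpoint of $s_j$ to its image under $\alpha$ by continuity with the outside behaviour (1), since those endpoints lie in $\partial C$. An affine map is determined by its values on three non-collinear points, so the restriction to each $T_j$ is forced, and hence $h$ is unique. The main obstacle I anticipate is purely bookkeeping: carefully confirming that the vertex-to-vertex correspondence induced by (2) is the one I claim (that $\hat T_j$ really has vertices $\vecy_0$ and the endpoints of $\alpha(s_j)$, consistent with the numbering convention $\alpha(s_j) \subseteq \hat T_j$), so that the affine pieces are forced to agree on shared edges rather than merely being continuous by coincidence.
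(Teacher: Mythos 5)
Your proposal is correct. The paper offers no proof of this lemma (it is introduced with ``The following fact is then clear''), and your argument --- defining each $\alpha_j$ as the unique affine map determined by $\vecx_0\mapsto\vecy_0$ and the two endpoints of $s_j$ mapping to their $\alpha$-images, then checking agreement on shared edges and on $\partial C$, bijectivity triangle-by-triangle, and uniqueness via the three-point determination of affine maps --- is exactly the standard verification the authors are taking for granted.
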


We shall say that $h$ is the \textit{locally piecewise affine} map determined by $(C,\alpha, \vecx_0,\vecy_0)$. 

%
\begin{figure}[!ht]
\begin{center}
\begin{tikzpicture}[scale=0.8]
%
%
 \draw[thick,black] (1,0) -- (3,2) -- (1,4) -- (-1,2) -- (1,0);
 \draw[blue,dashed]  (1,0) -- (1.3,1.9) -- (1,4);
 \draw[blue,dashed]  (-1,2) -- (1.3,1.9) -- (3,2);
 \draw[red] (-1,0) -- (4,0) ;
 \draw[red] (0,-1) -- (0,4);
 \draw[red] (1.3,1.9) node[circle, draw, fill=black!50,inner sep=0pt, minimum width=4pt] {};
 \draw[black] (1.3,2.1) node[left] {$\vecx_0$};
 \draw[black] (2,3) node[right] {$C$};

  \path[thick,->] (5,3) edge [bend left] (6.5,3);
  \draw[black] (5.75,3.3) node[above] {$h$};

 \draw[thick,black] (8,1) -- (10,1) -- (11,4) -- (9,4) -- (8,1);
 \draw[blue,dashed]  (8,1) -- (10,2) -- (11,4);
 \draw[blue,dashed]  (10,1) -- (10,2) -- (9,4);
 \draw[red] (7,0) -- (12,0) ;
 \draw[red] (8,-1) -- (8,4);
 \draw[red] (10,2) node[circle, draw, fill=black!50,inner sep=0pt, minimum width=4pt] {};
 \draw[black] (10,2.1) node[left] {$\vecy_0$};
 \draw[black] (11,4) node[right] {$\alpha(C)$};

\end{tikzpicture}
\caption{The locally piecewise affine map $h$ determined by $(C,\alpha,\vecx_0,\vecy_0)$.}\label{lpam-pic}
\end{center}
\end{figure}

It is clear that $h$ is necessarily continuous and invertible. Indeed the following result is straightforward.

\begin{lem}\label{lpam-inv} Let $h$ be the locally piecewise affine map determined by  $(C, \alpha, \vecx_0,\vecy_0)$. Then $h^{-1}$ is the locally piecewise affine map determined by  $(h(C), \alpha^{-1}, \vecy_0,\vecx_0)$.
\end{lem}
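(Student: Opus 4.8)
The plan is to verify directly that $h^{-1}$ satisfies the four defining properties of the locally piecewise affine map determined by the tuple $(h(C),\alpha^{-1},\vecy_0,\vecx_0)$, invoking the uniqueness guaranteed by Lemma~\ref{vertex-move} to conclude that it coincides with that map. Since $h$ is already known to be a continuous invertible map (as remarked before the statement), $h^{-1}$ is well-defined, and it suffices to check the structural conditions. I first record the data: $h(C) = \alpha(C)$ is a convex $n$-gon (as $h$ agrees with $\alpha$ off $\interior{C}$, it fixes the boundary setwise), with sides $\alpha(s_1),\dots,\alpha(s_n)$, and the interior point $\vecy_0 \in \interior{\alpha(C)}$ induces the triangulation $\hat T_1,\dots,\hat T_n$ where $\alpha(s_j) \subseteq \hat T_j$. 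Applying Lemma~\ref{vertex-move} to the tuple $(\alpha(C),\alpha^{-1},\vecy_0,\vecx_0)$ produces a unique map; I must show $h^{-1}$ is it.

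First I would check condition (1): for $\vecx \notin \interior{\alpha(C)}$, I need $h^{-1}(\vecx) = \alpha^{-1}(\vecx)$. Since $h = \alpha$ on the complement of $\interior{C}$ and $\alpha$ maps $\interior{C}$ onto $\interior{\alpha(C)}$ bijectively, the complement of $\interior{\alpha(C)}$ is exactly $h(\mR^2 \setminus \interior{C})$, on which $h^{-1} = \alpha^{-1}$ by inverting the relation $h = \alpha$ there. For condition (2), note that $h$ maps $T_j$ onto $\hat T_j$, so $h^{-1}$ maps $\hat T_j$ onto $T_j$; since the triangulation of $\alpha(C)$ determined by $\vecy_0$ is precisely $\{\hat T_j\}$ and that of $C$ determined by $\vecx_0$ is $\{T_j\}$, this is exactly the required mapping of triangles. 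For condition (3), the restriction $h|T_j = \alpha_j$ is an affine bijection of the triangle $T_j$ onto $\hat T_j$, hence its inverse $\alpha_j^{-1} = h^{-1}|\hat T_j$ is affine, giving the piecewise-affine structure of $h^{-1}$. Finally condition (4) is immediate: $h(\vecx_0) = \vecy_0$ forces $h^{-1}(\vecy_0) = \vecx_0$.

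The only genuinely delicate point is the bookkeeping ensuring the indexing and the side containments line up, namely that the side $\alpha^{-1}(\alpha(s_j)) = s_j$ of $h(C)$ pulls back into the triangle $T_j$ under $h^{-1}$, matching the labelling convention demanded by Lemma~\ref{vertex-move}. This is forced by the containment $\alpha(s_j) \subseteq \hat T_j$ together with $h^{-1}(\hat T_j) = T_j$ and $s_j \subseteq T_j$, so there is no freedom: the numbering is consistent. I expect this indexing verification to be the main (though still routine) obstacle, since everything else follows mechanically from inverting each affine piece. Once all four properties are confirmed, the uniqueness clause of Lemma~\ref{vertex-move} identifies $h^{-1}$ with the locally piecewise affine map determined by $(h(C),\alpha^{-1},\vecy_0,\vecx_0)$, completing the proof.
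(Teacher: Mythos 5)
Your verification is correct and is exactly the routine check the paper has in mind: the paper states this lemma without proof, calling it ``straightforward,'' and your argument---checking the four conditions of Lemma~\ref{vertex-move} for $h^{-1}$ against the tuple $(h(C),\alpha^{-1},\vecy_0,\vecx_0)$ and invoking uniqueness---is the natural way to fill that in. The indexing point you flag ($s_j\subseteq T_j$ forcing the labelling to match) is handled correctly, so there is nothing to add.
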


In the last section we shall repeatedly use the following special case of Lemma~\ref{vertex-move} applied with $\alpha = \mathrm{id}$, the identity mapping on $\mR^2$.

\begin{lem}\label{triangle-move} Suppose that $T$ and $\hat{T}$ are two triangles in $\mR^2$ with vertices $\veca,\vecb,\vecc$ and $\hat{\veca},\vecb,\vecc$ respectively.  
Suppose that $Q$ is a convex quadrilateral in $\mR^2$ which contains $T$ and $\hat{T}$ and which has $\overline{\vecb \vecc}$ as one side. Then the locally piecewise affine map determined by $(Q,\mathrm{id},\veca,\hat{\veca})$ maps $T$ onto $\hat{T}$ and fixes all points outside of $Q$.
\end{lem}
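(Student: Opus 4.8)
The plan is to derive this as an immediate specialization of Lemma~\ref{vertex-move}, taking $C = Q$, $\alpha = \mathrm{id}$, $\vecx_0 = \veca$ and $\vecy_0 = \hat{\veca}$. Almost all of the content lies in matching up the notation so that the two shared vertices $\vecb,\vecc$ become the endpoints of the distinguished common side, and the triangles singled out by the two triangulations are exactly $T$ and $\hat{T}$.

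First I would note that, for the locally piecewise affine map $h$ determined by $(Q,\mathrm{id},\veca,\hat{\veca})$ to be defined via Lemma~\ref{vertex-move}, one needs $\veca,\hat{\veca} \in \interior{Q}$; this is part of what it means to form that map, and it holds in our setting since $\overline{\vecb\vecc}$ is a side of $Q$ and $T,\hat{T}$ (with respective opposite vertices $\veca,\hat{\veca}$) are nondegenerate triangles sitting inside $Q$, so neither $\veca$ nor $\hat{\veca}$ lies on $\overline{\vecb\vecc}$ or on another side.

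Next I would label the sides of the convex quadrilateral $Q$ as $s_1,\dots,s_4$ with $s_1 = \overline{\vecb\vecc}$. By construction the triangulation $T_1,\dots,T_4$ of $Q$ determined by $\vecx_0=\veca$ has, as its piece adjacent to $s_1$, the closed triangle with side $\overline{\vecb\vecc}$ and opposite vertex $\veca$ --- that is, $T_1 = T$. Since $\alpha=\mathrm{id}$, we have $\alpha(Q)=Q$ and $\alpha(s_1)=\overline{\vecb\vecc}$, so in the triangulation $\hat{T}_1,\dots,\hat{T}_4$ determined by $\vecy_0=\hat{\veca}$ (numbered so that $\alpha(s_j)\subseteq \hat{T}_j$) the piece $\hat{T}_1$ is the triangle with side $\overline{\vecb\vecc}$ and opposite vertex $\hat{\veca}$, namely $\hat{T}_1 = \hat{T}$.

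With these identifications the two conclusions follow directly: property (2) of Lemma~\ref{vertex-move} gives $h(T_1)=\hat{T}_1$, i.e.\ $h$ maps $T$ onto $\hat{T}$, while property (1) gives $h(\vecx)=\alpha(\vecx)=\vecx$ for all $\vecx\notin\interior{Q}$, so that $h$ fixes every point outside $Q$. I expect no genuine obstacle here; the statement is essentially a relabeling of Lemma~\ref{vertex-move}, and the only point deserving care is the bookkeeping that assigns the matching index $1$ to both $T$ and $\hat{T}$ through the common side $\overline{\vecb\vecc}$.
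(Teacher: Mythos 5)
Your proof is correct and follows exactly the route the paper intends: the paper presents Lemma~\ref{triangle-move} as the special case of Lemma~\ref{vertex-move} with $\alpha=\mathrm{id}$ and offers no further argument, and your write-up simply makes the relabelling explicit. Your added check that $\veca,\hat{\veca}\in\interior{Q}$ is a reasonable point of care (it is needed to invoke Lemma~\ref{vertex-move}), and matches the way the lemma is actually used later in the paper, where those points are always interior.
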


\begin{figure}[!ht]
\begin{center}
\begin{tikzpicture}[scale=0.8]
%
%
 \draw[thick,black] (0.5,2) -- (2,1) -- (1,4) -- (0.5,2);
 \draw[blue,dashed]  (0.5,2) -- (2,0) -- (4,2) -- (1,4) -- (0.5,2);
 \draw[red] (-1,0) -- (4,0) ;
 \draw[red] (0,-1) -- (0,4);
 \draw[red] (2,1) node[circle, draw, fill=black!50,inner sep=0pt, minimum width=4pt] {};
 \draw[red] (3,2) node[circle, draw, fill=black!50,inner sep=0pt, minimum width=4pt] {};
 \draw[black] (2,0.9) node[below] {$\veca$};
 \draw[black] (3,2) node[right] {$\hat{\veca}$}; 
 \draw[black] (0.5,2) node[left] {$\vecb$};
 \draw[black] (1,4) node[left] {$\vecc$};
 \draw[black] (0.7,2.3) node[right] {$T$};
 \draw[black] (2.5,3.2) node[right] {$Q$};

  \path[thick,->] (5,3) edge [bend left] (6.5,3);
  \draw[black] (5.75,3.3) node[above] {$h$};

 \draw[red] (7,0) -- (12,0) ;
 \draw[red] (8,-1) -- (8,4);
 \draw[thick,black] (8.5,2) -- (11,2) -- (9,4) -- (8.5,2);
 \draw[blue,dashed]  (8.5,2) -- (10,0) -- (12,2) -- (9,4) -- (8.5,2);
 \draw[red] (10,1) node[circle, draw, fill=black!50,inner sep=0pt, minimum width=4pt] {};
 \draw[red] (11,2) node[circle, draw, fill=black!50,inner sep=0pt, minimum width=4pt] {};
 \draw[black] (10,0.9) node[below] {$\veca$};
 \draw[black] (11,2) node[right] {$\hat{\veca}$}; 
 \draw[black] (8.5,2) node[left] {$\vecb$};
 \draw[black] (9,4) node[left] {$\vecc$};
 \draw[black] (9,2.6) node[right] {$\hat{T}$};
 \draw[black] (10.5,3.2) node[right] {$Q$};

\end{tikzpicture}
\caption{A locally piecewise affine map moving $T$ to $\hat{T}$.}\label{triangle-move-pic}
\end{center}
\end{figure}
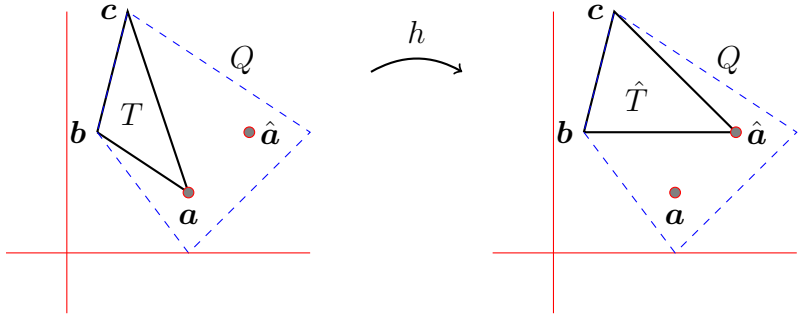

Our first aim is to show that for any locally piecewise affine map $h$, $\AC(\sigma) \simeq \AC(h(\sigma))$.

\begin{lem}\label{lpam-vf}
Suppose that $h$ is a locally piecewise affine map determined by $(\alpha,C,\vecx_0,\vecy_0)$ where $C$ is a convex $n$-gon.
Let $S = [\vecw_0,\vecw_1,\linebreak[1]\dots,\vecw_m]$ be a list of elements in $\mR^2$ and let $\hat S = [h(\vecw_0),h(\vecw_1),\dots,h(\vecw_m)]$. Then
  \[ \frac{1}{c_n} \vf(S) \le \vf({\hat S}) \le c_n \vf(S) \]
for some positive constant $c_n$ which is independent of $S$.
\end{lem}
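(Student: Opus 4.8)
The plan is to mimic the proof of Lemma~\ref{hpa-vf}, replacing the two affine pieces of a half-plane-affine map by the (boundedly many) affine pieces of $h$. As in that lemma, it suffices to prove just the left-hand inequality $\frac{1}{c_n}\vf(S)\le\vf(\hat S)$: since $h^{-1}$ is, by Lemma~\ref{lpam-inv}, a locally piecewise affine map determined by $(h(C),\alpha^{-1},\vecy_0,\vecx_0)$ with $h(C)=\alpha(C)$ again a convex $n$-gon, the same inequality applied to $h^{-1}$ and $\hat S$ gives $\frac{1}{c_n}\vf(\hat S)\le\vf(S)$, which is the right-hand inequality with the same constant.

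The geometric heart of the argument is to understand the image of an arbitrary line $\ell$ under $h$. Since $h$ is a homeomorphism of $\mR^2$ and $\ell$ separates the plane into two open half-planes $P^+$ and $P^-$, the set $h(\ell)$ is a simple curve separating $\mR^2$ into the two components $h(P^+)$ and $h(P^-)$. I would then check that $h(\ell)$ is piecewise linear with at most $n+2$ segments: the set $\ell\cap C$ is a chord of the convex polygon $C$ which passes through a contiguous run of at most $n$ of the fan triangles $T_1,\dots,T_n$ (a chord crosses the segment joining $\vecx_0$ to a vertex of $C$ exactly when that vertex lies strictly on the far side of $\ell$ from $\vecx_0$, and at most $n-1$ vertices can do so, since $\vecx_0\in\interior{C}$ lies in the convex hull of the vertices); $h$ is affine on each such triangle and equals the affine map $\alpha$ off $\interior{C}$, the latter contributing two unbounded rays on the single line $\alpha(\ell)$. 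Thus $h(\ell)$ is contained in a union of at most $N_n:=n+2$ lines $m_1,\dots,m_{N_n}$.

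To transfer crossings, fix a line $\ell$ with $\vf(S,\ell)=\vf(S)=:k$ and consider the $k$ crossing segments $s_i=\ls[\vecw_i,\vecw_{i+1}]$ of $S$ on $\ell$, writing $\vecv_i=h(\vecw_i)$ and $\hat s_i=\ls[\vecv_i,\vecv_{i+1}]$. If the endpoints of $s_i$ lie strictly on opposite sides of $\ell$, then $\vecv_i$ and $\vecv_{i+1}$ lie in the distinct components $h(P^+)$ and $h(P^-)$, so the straight segment $\hat s_i$ must meet $h(\ell)$; at a point where it passes through the interior of a constituent piece lying on some $m_t$ it crosses $m_t$ transversally, and since a straight segment meets a line at most once this places $\vecv_i$ and $\vecv_{i+1}$ on opposite sides of $m_t$, so $\hat s_i\in X(\hat S,m_t)$. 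Each such crossing segment of $S$ therefore yields a crossing segment of $\hat S$ on at least one of the $N_n$ lines; since distinct indices $i$ give distinct segments $\hat s_i$, the pigeonhole principle produces a single line $m_{t_0}$ carrying at least $k/N_n$ of them, whence $\vf(\hat S)\ge\vf(\hat S,m_{t_0})\ge k/N_n$, and one may take $c_n$ a fixed multiple of $N_n$.

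The step I expect to be the main obstacle is precisely the one that was already delicate in Lemma~\ref{hpa-vf}: the crossing segments $s_i$ with an endpoint on $\ell$ (rules (ii)--(iv) of Definition~\ref{crossing-defn}), now compounded by the possibility that $\hat s_i$ meets $h(\ell)$ only at a junction between two of its linear pieces rather than in the interior of one. As in the half-plane case, when $\hat s_i$ itself fails to be a crossing segment of $\hat S$ on any $m_t$ one must charge the crossing instead to a neighbouring segment $\hat s_{i-1}$ or $\hat s_{i+1}$ on one of the lines through the image $h(\vecx_0)$ of the fan centre or through a boundary point of $h(C)$. With up to $n$ interior edges in place of the single boundary line, the bookkeeping is heavier, and the key point to verify is that each image segment $\hat s_j$ is charged only a bounded (independent of $S$) number of times, so that the pigeonhole count degrades by at most a factor that can be absorbed into $c_n$. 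Degenerate positions of $\ell$ (through $\vecx_0$, or along one of the triangulation edges) may be excluded, as they do not increase $\vf(S,\ell)$.
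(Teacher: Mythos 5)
Your strategy is genuinely different from the paper's. You work globally with the image curve $h(\ell)$, observing that it lies on a bounded number of lines (note the two unbounded rays both lie on the single line $\alpha(\ell)$, since $h=\alpha$ off $\interior{C}$, so the count is at most $n+1$ rather than $n+2$ --- harmless), and you transfer each crossing of $S$ on $\ell$ to a crossing of $\hat S$ on one of these lines. The paper instead localizes: it picks the one region $T_r$ (a fan triangle or the exterior $T_0$) containing endpoints of at least $\lceil k/(n+1)\rceil$ of the crossing segments, and works only with the single line $\alpha_r(\ell)$ together with auxiliary lines drawn just outside the boundary of $h(T_r)$, chosen so close that every image segment with exactly one endpoint in $T_r$ must cross one of them. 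Your treatment of the transversal case (rule (i) of Definition~\ref{crossing-defn}) is sound and can be made rigorous: since a straight segment meets a line in at most one point unless contained in it, two points lying in different components of $\mR^2\setminus h(\ell)$ are always strictly separated by one of the constituent lines. If completed, your route would even give a better constant, roughly $n+1$ in place of the paper's $(n+1)^2$.

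There is, however, a genuine gap, and it is the one you yourself flag: the crossing segments arising from rules (ii)--(iv), where an endpoint of $s_i$ lies on $\ell$. There $\vecv_i$ lies on some $m_t$, but $\hat s_i$ fails to be a crossing segment of $\hat S$ on $m_t$ exactly when $\vecv_{i-1}$ also lies on $m_t$ (off the curve $h(\ell)$ but on its extension), and this can happen simultaneously for every constituent line through $\vecv_i$. You propose to charge such failures to neighbouring segments but do not identify which line the neighbour crosses, nor prove the bounded-multiplicity claim on which your pigeonhole count rests; you explicitly leave this ``to verify.'' This is not routine: in Lemma~\ref{hpa-vf} the paper resolves the analogous point by showing that when $\hat s_i$ fails on $\alpha_1(\ell)$ the point $\vecx_{i-1}$ is forced into the other half-plane, so $\hat s_{i-1}$ crosses $\alpha_2(\ell)$, and no double-counting occurs because $s_{i-1}\notin X(S,\ell)$; with $n$ internal edges in play one needs some substitute for that forcing (the paper's localization to $T_r$ serves exactly this purpose). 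Until that bookkeeping is carried out, your argument establishes the left-hand inequality only for lists all of whose crossing segments are of type (i), so the proof is incomplete.
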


\begin{proof}
By the previous lemma it suffices to prove either one of the inequalities. For notational simplicity, we shall write $s_i = \ls[\vecw_i,\vecw_{i+1}]$, for $0 \le i \le m-1$ and write $\vecv_i = h(\vecw_i)$ for $0 \le i \le m$. 

Let $T_1,\dots,T_n$ denote the subsets of the plane defined at the start of this section, and let $T_0 = \mR^2 \setminus \interior{C}$. 

Suppose then that $\vf(S) = k$ and that $\ell$ is a line such that $\vf(S,\ell) = \vf(S)$.


At least one of the regions $T_0,\dots,T_n$ has at least $k_1 = \lceil \frac{k}{n+1} \rceil$ crossing segments of $S$ on $\ell$ with at least one of their endpoints in that region. Suppose that this region is $T_r$. Let $K$ denote the set of indices $i$ such that $s_i \in X(S,\ell)$ and $s_i$ has at least one endpoint in $T_r$.
Our aim is to show that each of these crossing segments corresponds to a crossing segment of $\hat S$ on one of a finite number of lines. This will require a careful consideration of cases.

Let
  \[ \delta = \min\{d(\vecv_i,h(T_r)) \st \text{$0 \le i \le n $ and $\vecv_i \not\in T_r$}\}. \]
We take the minimum of the empty set to be zero.

Suppose first that $1 \le r \le n$. Then $h(T_r)$ is a triangle. Choose a triangle $\hat T$ with sides parallel to those of $h(T_r)$, which contains $h(T_r)$ in its interior, and such that if $\vecv \in {\hat T}$ then $d(\vecv,h(T_r)) < \frac{\delta}{2}$.
Let $\ell_1$,$\ell_2$ and $\ell_3$ denote the three lines forming the sides of $\hat T$ (see Figure~\ref{l1-l2-l3}). From this construction,
 every segment ${\hat s}_i$ with $i \in K$ either lies entirely inside $h(T_r)$, or else it is a crossing segment for $\hat S$ on at least one of the lines $\ell_1$, $\ell_2$ or $\ell_3$.

Let $\ell_0$ denote the line which is the image of $\ell$ under $\alpha_r$ (considered as extended to the whole plane).

\begin{figure}[!ht]
\begin{center}
\begin{tikzpicture}[scale=0.8]
%
%
 \draw[thick,black] (0,0) -- (4,0) -- (3,5) -- (0,0);
 \draw[red,dashed] (-1.5,-0.3) -- (5.5,-0.3);
 \draw[red,dashed] (4.5,-1) -- (3.1,6);
 \draw[red,dashed] (-1,-1) -- (3.2,6);
 \draw[blue,thick] (-1,1) -- (5.5,4.5);

 \draw (0.0,0.7) node[left] {$\ell_1 $};
 \draw (2,-0.5) node[below] {$\ell_3$};
 \draw (4.4,2) node[below] {$\ell_2$};
 \draw (4.4,3.5) node[right] {$\ell_0$};
 \draw (1.5,4.7) node[right] {$\hat T$};
 \draw (2.3,1.3) node[below] {$h(T_r)$};
\end{tikzpicture}
\caption{The construction of $\ell_0$, $\ell_1$ ,$\ell_2$ and $\ell_3$.}\label{l1-l2-l3}
\end{center}
\end{figure}
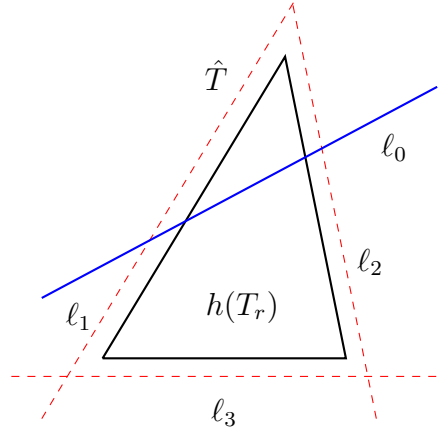

Suppose then that $i \in K$, and that both $\vecw_i$ and $\vecw_{i+1}$ lie in $T_r$. Referring to Definition~\ref{crossing-defn} there are four possibilities:
\begin{enumerate}[(i)]
 \item $\vecw_i$ and $\vecw_{i+1}$ lie on strictly opposite sides of $\ell$. In this case $\vecv_i$ and $\vecv_{i+1}$ lie on strictly opposite sides of $\ell_0$ and so ${\hat s}_i \in X({\hat S},\ell_0)$.
 \item $i=0$ and $\vecw_i \in \ell$. Clearly then $\vecv_i \in \ell_0$ and so ${\hat s}_i \in X({\hat S},\ell_0)$.
 \item $i > 0$, $\vecw_i \in \ell$ and $\vecw_{i-1} \not\in \ell$. In this case $s_{i-1} \not \in X(S,\ell)$ and either:
    \begin{enumerate}[(a)]
     \item $\vecw_{i-1} \in T_r$. In this case $\vecv_i \in \ell_0$ and $\vecv_{i-1} \not\in \ell_0$ and hence ${\hat s}_i \in X({\hat S},\ell_0)$.
     \item $\vecw_{i-1} \not \in T_r$. In this case ${\hat s}_i$ need not be in $X({\hat S},\ell_0)$ since $\vecv_{i-1}$ might lie on $\ell_0$. However, ${\hat s}_{i-1}$ must be a crossing segment for $\hat S$ on one of the boundary lines $\ell_1$, $\ell_2$ or $\ell_3$.
    \end{enumerate}
  \item $i=m-1$, $\vecw_i \not\in \ell$ and $\vecw_{i+1} \in \ell$. Again ${\hat s}_i \in X({\hat S},\ell_0)$.
\end{enumerate}
Suppose next that $i \in K$ and that one of $\vecw_i$ and $\vecw_{i+1}$ does not lie in $T_r$. As noted above, in this case ${\hat s}_i \in X({\hat S},\ell_j)$ for some $j=1,2,3$.

At this stage we have shown that if $1 \le r \le n$, then there are at least $k_1$ segments of $\hat S$ which are crossing segments for at least one of the lines $\ell_j$ with $0 \le j \le 4$. Thus, for at least one of these values of $j$, $\vf({\hat S},\ell_j) \ge \lceil \frac{k_1}{4} \rceil$.

The remaining case is where $r = 0$ and so $T_r$ is not a triangle.  The proof in this case is almost identical except that now one must work with 
\begin{enumerate}
 \item lines $\ell_1,\dots,\ell_n$ chosen close to the boundary of $h(T_0)$ so that all the endpoints $\vecv_i$ which are not in $T_0$ lie inside the smaller $n$-gon determined by these lines, and
 \item the line $\ell_0 = \alpha(\ell)$ (see Figure~\ref{T_0 case}).
\end{enumerate}
Every segment ${\hat s}_i = \ls[\vecv_i,\vecv_{i+1}]$ with only one endpoint in $T_0$ lies in $X({\hat S}, \ell_j)$ for at least one $j$ with $1 \le j \le n$. 

Following the proof above one can then show that there are (at least) $k_1$ segments of $\hat S$ which are crossing segments for at least one of the lines $\ell_j$ where $0 \le j \le n$, and hence $\vf({\hat S},\ell_j)  \ge  \lceil \frac{k_1}{n+1} \rceil$ for at least one value of $j$ in this range.

\begin{figure}[!ht]
\begin{center}
\begin{tikzpicture}[scale=0.8]
%
%
 \draw[thick,black] (0,0) -- (4,0) -- (5,4) -- (2,5) -- (-1,4) -- (0,0);
 \draw[thick,gray] (0,0) -- (3,3) -- (-1,4);
 \draw[thick,gray] (4,0) -- (3,3) -- (2,5);
 \draw[thick,gray] (3,3) -- (5,4);
 \draw[thick,blue]  (-1.5,1) -- (6,3);
 \draw[red,dashed] (-1,0.3) -- (6,0.3);
 \draw[red,dashed] (3.5,-1) -- (5,5);
 \draw[red,dashed] (0.5,-1) -- (-1,5);
 \draw[red,dashed] (-1.8,3.5) -- (4.2,5.5);
 \draw[red,dashed] (5.8,3.5) -- (-0.2,5.5);

 \draw (5.5,2.5) node[right] {$\ell_0 = \alpha(\ell)$};
 \draw (-3,2) node[left] {$h(T_0)$};
 \draw (5.7,0.3) node[above] {$\ell_1$};
 \draw (3.5,-1) node[right] {$\ell_2$};
 \draw (0.5,-1) node[left] {$\ell_5$};
 \draw (0,5.4) node[above] {$\ell_3$};
 \draw (4,5.4) node[above] {$\ell_4$};

\end{tikzpicture}
\caption{Choosing $\ell_0,\dots,\ell_n$ when $r = 0$ (and $n=5$).}\label{T_0 case}
\end{center}
\end{figure}
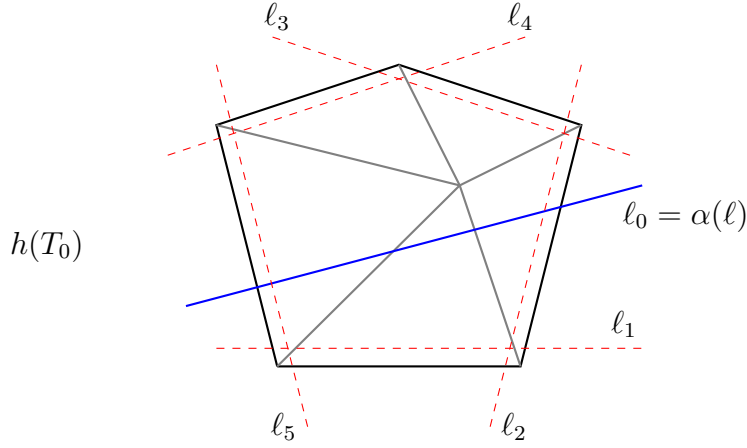

In either case then
  \[ \vf({\hat S}) \ge \left \lceil \frac{k_1}{n+1} \right \rceil
    = \left \lceil \frac{\lceil\, \vf(S)/(n+1)\, \rceil}{n+1} \right \rceil
    \ge \frac{\vf(S)}{(n+1)^2}. \]
\end{proof}

The above proof of course shows that $c_n \le (n+1)^2$. Heuristically we expect that $c_n = n+1$ but we are unable to prove this.

\begin{thm}\label{lpam-isom}
Suppose that $\sigma$ is a nonempty compact subset of the plane, and that $h$ is a locally piecewise affine map. Then $\BV(\sigma) \simeq \BV(h(\sigma))$ and $\AC(\sigma) \simeq \AC(h(\sigma))$
\end{thm}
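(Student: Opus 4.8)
The plan is to follow the proof of Theorem~\ref{hpam-isoms} essentially verbatim, replacing the half-plane-affine estimate of Lemma~\ref{hpa-vf} by the locally piecewise affine estimate of Lemma~\ref{lpam-vf}, and the inversion result of Lemma~\ref{hpa-inverse} by Lemma~\ref{lpam-inv}. The only structural change is that the factor $2$ appearing in the half-plane-affine case is replaced by the constant $c_n$.

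First, for $f \in \BV(\sigma)$ I would define $\hat f : h(\sigma) \to \mC$ by $\hat f(h(\vecx)) = f(\vecx)$, which is well defined because $h$ is a bijection. Since $h$ merely relabels the points of any finite ordered list $S = [\vecw_0,\dots,\vecw_m]$ in $\sigma$ to the list $\hat S = [h(\vecw_0),\dots,h(\vecw_m)]$ in $h(\sigma)$, and $\hat f(h(\vecw_i)) = f(\vecw_i)$, we have $\cvar(\hat f, \hat S) = \cvar(f, S)$. Applying the left-hand inequality of Lemma~\ref{lpam-vf}, namely $\vf(\hat S) \ge \frac{1}{c_n}\vf(S)$, gives
\[ \frac{\cvar(\hat f, \hat S)}{\vf(\hat S)} \le c_n \frac{\cvar(f, S)}{\vf(S)} \le c_n \var(f,\sigma). \]
Every finite ordered list in $h(\sigma)$ arises as $\hat S$ for some list $S$ in $\sigma$ (take preimages under $h$), so taking the supremum over all such $\hat S$ yields $\var(\hat f, h(\sigma)) \le c_n \var(f,\sigma)$, and hence $\norm{\hat f}_{\BV(h(\sigma))} \le c_n \norm{f}_{\BV(\sigma)}$. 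In particular $\hat f \in \BV(h(\sigma))$, and the map $j : \BV(\sigma) \to \BV(h(\sigma))$, $j(f) = \hat f = f \circ h^{-1}$, is a bounded, linear, unital, multiplicative algebra homomorphism.

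To see that $j$ is an isomorphism I would invoke Lemma~\ref{lpam-inv}: since $h^{-1}$ is itself a locally piecewise affine map, the identical construction applied to $h^{-1}$ produces a bounded algebra homomorphism which is clearly the inverse of $j$. Thus $j$ is a Banach algebra isomorphism and $\BV(\sigma) \simeq \BV(h(\sigma))$. For the $\AC$ statement I would argue as in Theorem~\ref{hpam-isoms}: if $g \in \CTPP(\sigma)$, then its continuous piecewise-planar extension to a neighbourhood of $\sigma$, composed with the piecewise-affine map $h^{-1}$, is again continuous and piecewise planar on a (possibly refined) triangulation of a neighbourhood of $h(\sigma)$, so $j(g) \in \CTPP(h(\sigma))$. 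Hence $j$ restricts to a bijection of $\CTPP(\sigma)$ onto $\CTPP(h(\sigma))$; being a bounded isomorphism with bounded inverse, it carries the closure of one onto the closure of the other, and by the density result of Section~\ref{prelims} these closures are $\AC(\sigma)$ and $\AC(h(\sigma))$, giving $\AC(\sigma) \simeq \AC(h(\sigma))$.

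The principal difficulty, the two-sided comparison of variation factors, has already been dealt with in Lemma~\ref{lpam-vf}; once that estimate is available the remainder is a routine adaptation of the half-plane-affine case. The one point meriting a little care is the $\CTPP$-preservation claim: one must verify that composing a function planar on the cells of some triangulation with the piecewise-affine map $h^{-1}$ yields a function that is still piecewise planar. This holds because $h^{-1}$ is affine on each piece of its defining subdivision, so the common refinement of that subdivision with the original triangulation is a triangulation on whose cells $g \circ h^{-1}$ is planar.
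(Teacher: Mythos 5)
Your proposal is correct and follows essentially the same route as the paper: define $\hat f(h(\vecx)) = f(\vecx)$, note that $\cvar$ is unchanged under relabelling, apply Lemma~\ref{lpam-vf} to control $\vf(\hat S)$, invoke Lemma~\ref{lpam-inv} for surjectivity, and pass from $\CTPP$ to $\AC$ by density. The only difference is that you spell out the common-refinement argument for why $j$ preserves $\CTPP$, a point the paper leaves as ``clear''; your added detail is accurate.
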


\begin{proof}
For $f \in \BV(\sigma)$, let ${\hat f}: h(\sigma) \to \mC$ be defined by ${\hat f}(h(\vecx)) = f(\vecx)$. Suppose that $f \in \BV(\sigma)$ and that ${\hat S} = [\vecv_0,\dots,\vecv_m]$ is a list of points in $h(\sigma)$. Let $S = [\vecw_0,\dots,\vecw_m] \subset \sigma$ denote the list of preimages of the points in $\hat S$. Then, using the notation of Lemma~\ref{lpam-vf},
  \[ \frac{\cvar({\hat f},{\hat S})}{\vf({\hat S})}
     =\frac{\cvar({f},{S})}{\vf({\hat S})}
     \le C_n \frac{\cvar({f},{S})}{\vf({S})} \le C_n \var(f,\sigma).
  \]
Thus, $\hat f$ is of bounded variation with $\ssnorm{\hat f}_{\BV(h(\sigma))} \le (1+C_n) \normbv{f}$. It follows, using Lemma~\ref{lpam-inv} that the map $j: f \mapsto {\hat f}$ is a bounded isomorphism from $\BV(\sigma)$ onto $\BV(h(\sigma))$.

It is clear that $j$ maps $\CTPP(\sigma)$ onto $\CTPP(h(\sigma))$ and hence that $j$ provides an isomorphism from $\AC(\sigma)$ onto $\AC(h(\sigma))$.
\end{proof}

\section{Polygons and ears}

The main result from this section is that given any two simple polygons $P_1$ and $P_2$ we have that $\AC(P_1) \simeq \AC(P_2)$. The proof requires a nice fact from computational geometry called the `Two Ears Theorem' which was proven by Meisters \cite{Me}.

Given $\veca,\vecb \in \mR^2$ we shall let $\lso[\veca,\vecb]$ denote the `open' line segment between $\veca$ and $\vecb$, that is
  \[ \lso[\veca,\vecb] = \{ \lambda \veca + (1-\lambda) \vecb \st 0 < \lambda < 1\}. \]


Let $\vecv$ be a vertex of a polygon $P$ and suppose that $\veca,\vecb$ are the neighbouring vertices to $\vecv$. We say that $\vecv$ is an ear of $P$ if $\lso[\veca , \vecb]$ lies entirely in the interior of $P$.

\begin{thm}[Two Ears Theorem] Every simple polygon with more than 3 vertices has at least 2 ears.
\end{thm}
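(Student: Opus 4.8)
The plan is to prove the Two Ears Theorem by strong induction on the number of vertices $n \ge 4$, using as the fundamental tool the existence of a \emph{diagonal}: a segment $\lso[\vecu,\vecw]$ joining two non-adjacent vertices of $P$ that lies entirely in the interior of $P$. The key structural fact I would establish first (as a lemma, or invoke as standard) is that every simple polygon with $n \ge 4$ vertices admits such a diagonal. To see this, pick a vertex $\vecv$ of smallest $x$-coordinate (breaking ties by smallest $y$); then $\vecv$ is a convex vertex, and its two neighbours $\veca, \vecb$ together with $\vecv$ form a triangle $T$. Either $\lso[\veca,\vecb]$ lies in the interior of $P$ — in which case $\vecv$ is itself an ear and we have found a diagonal — or some other vertex of $P$ lies inside (or on the boundary of) $T$. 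In the latter case, among all vertices inside $T$ choose the one $\vecz$ maximizing distance from the line through $\veca$ and $\vecb$; then $\lso[\vecv,\vecz]$ is a diagonal. This is the classical ``find a diagonal'' argument, and it is the part I expect to require the most care with degenerate configurations (vertices lying exactly on $\ls[\veca,\vecb]$, collinear triples), so I would state the nondegeneracy conventions for simple polygons up front.

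Once a diagonal $\lso[\vecu,\vecw]$ is in hand, it splits $P$ into two simple polygons $P'$ and $P''$, each sharing the edge $\ls[\vecu,\vecw]$ and each having strictly fewer vertices than $P$ (since $n \ge 4$ guarantees at least one vertex strictly on each side). Writing $|P|$ for the number of vertices, we have $|P'| + |P'' | = n + 2$, and each of $|P'|, |P''| \ge 3$. The inductive engine is then: an ear of a sub-polygon $P'$ whose supporting open diagonal avoids the cut edge $\ls[\vecu,\vecw]$ is automatically an ear of the original $P$, because the interior of $P'$ is contained in the interior of $P$. So I would count ears of the pieces and argue that enough of them survive as ears of $P$.

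The decomposition into cases runs on the sizes of $P'$ and $P''$. If a piece is a triangle ($|P'| = 3$), it has exactly one vertex not on the diagonal, and that vertex is an ear of $P'$; I must check its supporting segment does not meet the shared edge, which holds because a triangle's ear tip is opposite the diagonal. If a piece has $|P'| \ge 4$, by the induction hypothesis it has at least two ears, and at most one of these can be ``spoiled'' by having an endpoint at $\vecu$ or $\vecw$ (the only vertices adjacent to the cut), so at least one ear of $P'$ is a genuine ear of $P$. In every split of the two cases across $P'$ and $P''$, I would tally that each piece contributes at least one ear valid for $P$, and since the two pieces lie on opposite sides of the diagonal these two ears are distinct vertices of $P$. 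This yields the required two ears.

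The main obstacle is precisely the bookkeeping about which ears of the sub-polygons remain ears of $P$: an ear of $P'$ located at $\vecu$ or $\vecw$ need not be an ear of $P$, since its neighbouring vertices in $P$ differ from those in $P'$ (one neighbour is replaced by the endpoint of the diagonal). I would handle this by observing that the potentially-spoiled ears in each piece are confined to the two diagonal endpoints $\vecu, \vecw$, so a piece with at least two ears always retains at least one, and a triangular piece's unique ear is at its non-diagonal vertex and is therefore safe. Combining the at-least-one-ear contributions from the two opposite sides gives the two distinct ears of $P$, completing the induction. I would present the diagonal-existence step carefully since, as noted, the geometric degeneracies there are the genuinely delicate point; the inductive combinatorics, while needing attention, are routine once the survival-of-ears principle is isolated.
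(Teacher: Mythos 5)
First, a remark on context: the paper does not prove this theorem itself --- it is imported from Meisters \cite{Me} --- so your proposal is being measured against the standard proof rather than anything in the text. Your overall strategy (find a diagonal, split, induct) is the classical one; the diagonal-existence lemma is fine, as is the observation that an ear of a piece $P'$ at a vertex other than $\vecu,\vecw$ survives as an ear of $P$ (same neighbours, and $\interior{P'}\subseteq\interior{P}$). The genuine gap is your assertion that for a piece with $|P'|\ge 4$ ``at most one'' of its two guaranteed ears can be spoiled by sitting at $\vecu$ or $\vecw$. Both $\vecu$ and $\vecw$ are vertices of $P'$ (adjacent in $P'$, joined by the cut edge), so a priori both could be ears of $P'$; if $P'$ happened to have exactly two ears and they were located at $\vecu$ and $\vecw$, your argument extracts no ear of $P$ from that piece and the tally fails. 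Nothing in the induction hypothesis ``at least two ears'' rules this out, and you offer no argument for it.

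The standard repair --- essentially Meisters' own proof --- is to strengthen the induction hypothesis to ``at least two \emph{non-overlapping} ears'' (ears whose triangles have disjoint interiors). The key observation is that ears at the two endpoints of a common edge $\ls[\vecu,\vecw]$ always overlap: both ear triangles have $\ls[\vecu,\vecw]$ as an edge and both lie inside $P'$, so both contain a small half-disc about the midpoint of that edge on the interior side. Hence of two non-overlapping ears of $P'$ at most one lies in $\{\vecu,\vecw\}$, at least one survives into $P$, and the two survivors (one from each piece) have ear triangles in the two respective pieces and so are non-overlapping in $P$, closing the induction. (Alternatively, triangulate $P$ and count the leaves of the dual tree of the triangulation.) A smaller slip: a triangular piece has \emph{no} ears at all under the paper's definition, since the supporting segment of its apex is a boundary edge of the triangle; the apex is nevertheless an ear of $P$ because that supporting segment is precisely the diagonal $\lso[\vecu,\vecw]$, which lies in $\interior{P}$ by construction --- not because it ``avoids the shared edge'', which it manifestly does not.
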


A simple consequence of the Two Ears Theorem is that it is possible to triangulate any polygon. That is, given any polygon $P$, one may construct a finite family of `disjoint' triangles $\{T_n\}$ whose vertices are all vertices of $P$ and whose union equals $P$.

\begin{figure}[!ht]
\begin{center}
\begin{tikzpicture}[scale=1]
%
%
 \fill[blue!15] (1,3.2) -- (2,4.5) -- (4,3);
 \draw[thick,black] (0,0) -- (4,3) -- (2,4.5) -- (1,3.2) -- (1,1.7) -- (-1,4.5) -- (1.7,6) -- (3,4.5) -- (2,7) -- (-2,5) -- (0,0);
 \draw[black,dashed] (1,3.2) -- (4,3);
 \draw[thick,red,dashed] (1,3.2) -- (1.6,5.4) -- (4,3) -- (2.1,2.3) -- (1,3.2);

 \draw[red] (1.6,5.4) node[circle, draw, fill=black!50,inner sep=0pt, minimum width=4pt] {};
 \draw[black] (1.6,5.4) node[left] {$\vecu$};
 \draw[red] (2,4.5) node[circle, draw, fill=black!50,inner sep=0pt, minimum width=4pt] {};
 \draw[black] (2.05,4.5) node[below] {$\vecv$};
 \draw[red] (1,3.2) node[circle, draw, fill=black!50,inner sep=0pt, minimum width=4pt] {};
 \draw[black] (1,3.2) node[left] {$\veca$};
 \draw[red] (4,3) node[circle, draw, fill=black!50,inner sep=0pt, minimum width=4pt] {};
 \draw[black] (4,3) node[right] {$\vecb$};
 \draw[red] (2.1,2.3) node[circle, draw, fill=black!50,inner sep=0pt, minimum width=4pt] {};
 \draw[black] (2.1,2.3) node[below] {$\vecw$};

 \draw[black] (0,2.4) node[below] {$P$};
 \draw[black] (2.4,3.1) node[above] {$T$};
 \draw[black] (1.3,4.2) node[left] {$Q$};

\end{tikzpicture}
\caption{Lemma~\ref{construct-Q}.}
\end{center}
\end{figure}

\begin{lem}\label{construct-Q}
Suppose that $\vecv$ is an ear of a polygon $P$. Let $T = \triangle \veca \vecv \vecb$ denote the triangle formed by the two sides of $P$ which meet at $\vecv$ and the corresponding diagonal $\ls[\veca, \vecb]$. Then there exists a convex quadrilateral $Q$ with vertices $\veca$, $\vecu$, $\vecb$ and $\vecw$ such that
\begin{enumerate}
 \item $T \setminus \{\veca,\vecb\} \subseteq \interior{Q}$,
 \item $\lso[\veca,\vecu]$ and $\lso[\vecb,\vecu]$ lie in the complement of $P$, and
 \item $\lso[\veca,\vecw]$ and $\lso[\vecb,\vecw]$ lie in the interior of $P$.
\end{enumerate}
\end{lem}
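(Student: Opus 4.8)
The plan is to construct the quadrilateral $Q$ by placing the two new vertices $\vecu$ and $\vecw$ on the perpendicular bisector of the diagonal $\ls[\veca,\vecb]$ (or more simply, symmetrically relative to the triangle $T$), with $\vecu$ on the far side of the diagonal from $\vecv$ and $\vecw$ on the same side as $\vecv$ but beyond it. The point $\vecu$ is chosen outside $P$ and $\vecw$ inside $P$, each close enough to the diagonal that convexity of $Q = \veca\vecu\vecb\vecw$ is automatic and the required containment and avoidance conditions hold. The whole argument rests on the fact that $\vecv$ is an ear, so the open diagonal $\lso[\veca,\vecb]$ lies in $\interior{P}$, and on a compactness/continuity argument that produces a uniform margin around $T$.

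First I would record the geometric consequences of $\vecv$ being an ear. Since $\lso[\veca,\vecb] \subseteq \interior{P}$ and the two sides $\ls[\veca,\vecv]$, $\ls[\vecv,\vecb]$ are edges of $P$, the closed triangle $T$ meets $\partial P$ exactly in these two edges together with the vertices $\veca,\vecb$; the relatively open interior of $T$ and the open diagonal lie in $\interior{P}$. In particular the set $T \setminus \{\veca,\vecb\}$ has positive distance from the part of $\partial P$ not adjacent to $\vecv$, call it $\partial P'$ (the boundary minus the two ear-edges and their endpoints). Let $\rho = d(T \setminus \ssbrack{\veca,\vecb}, \partial P') > 0$; this positivity is where compactness of $\partial P'$ and the ear hypothesis are used. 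I would also fix a small $r>0$ so that the open $r$-neighbourhood of the open diagonal $\lso[\veca,\vecb]$ is contained in $\interior{P}$.

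Next I would choose the two new vertices. Let $\vecm = \frac12(\veca+\vecb)$ be the midpoint of the diagonal and let $\vecn$ be the unit normal to $\ls[\veca,\vecb]$ pointing into the half-plane \emph{not} containing $\vecv$. Set $\vecu = \vecm + s\,\vecn$ and $\vecw = \vecm - t\,\vecn$ for small positive scalars $s,t$ to be fixed. For $s$ small, $\vecu$ lies just across the diagonal on the $\vecv$-free side; since $T$ lies on the $\vecv$-side, the quadrilateral $\veca\vecu\vecb\vecw$ (traversed in this cyclic order) is convex for all sufficiently small $s,t$, and $T\setminus\ssbrack{\veca,\vecb}$ sits in its interior, giving condition~(1). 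For condition~(2), I would take $s$ small enough that $\vecu$ and the two open segments $\lso[\veca,\vecu]$, $\lso[\vecb,\vecu]$ lie in the complement of $P$: because these segments emanate from $\veca,\vecb$ into the exterior wedge opposite $T$, and $P$ is locally (near $\veca$ and near $\vecb$) on the $T$-side of these directions, a short enough reach keeps them outside $P$ — here I invoke the $\rho$-margin to avoid the far boundary $\partial P'$. For condition~(3), I would take $t$ small enough that $\vecw$ and the open segments $\lso[\veca,\vecw]$, $\lso[\vecb,\vecw]$ lie in $\interior{P}$; these segments run from $\veca,\vecb$ into the triangle's interior side, so for small $t$ they stay within the $r$-neighbourhood of $\lso[\veca,\vecb]$ and hence inside $\interior{P}$.

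\textbf{The main obstacle} is establishing condition~(2) rigorously, namely that the segments $\lso[\veca,\vecu]$ and $\lso[\vecb,\vecu]$ avoid \emph{all} of $P$, not merely a neighbourhood of the diagonal. The difficulty is that the polygon's boundary could in principle fold back near $\veca$ or $\vecb$ and approach the exterior wedge opposite $T$. I would resolve this by working locally at each of $\veca$ and $\vecb$: since $P$ is a simple polygon, near the vertex $\veca$ the set $P$ coincides with the closed interior angle between its two incident edges, one of which is the ear-edge $\ls[\veca,\vecv]$; choosing $\vecu$ so that the direction of $\lso[\veca,\vecu]$ points strictly into the \emph{exterior} angle at $\veca$ guarantees a short initial segment outside $P$, and the global $\rho$-margin controls the remainder away from $\veca,\vecb$. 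Taking $s$ smaller than a bound depending on $\rho$ and on both interior angles then forces the entire open segments outside $P$. Once all three conditions are verified for a common choice of small $s,t$, the quadrilateral $Q = \veca\vecu\vecb\vecw$ satisfies the claim.
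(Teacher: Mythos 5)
There is a genuine geometric error here: you have the two sides of the diagonal the wrong way around, and as a result your quadrilateral fails conditions (1) and (2). Because $\vecv$ is an ear, the open diagonal $\lso[\veca,\vecb]$ lies in $\interior{P}$, so a whole neighbourhood of the midpoint $\vecm$ lies in $\interior{P}$; the rest of the polygon sits on the $\vecv$-free side of $\ls[\veca,\vecb]$, and the exterior of $P$ is reached by going out past the two ear edges, i.e.\ beyond $\vecv$. Consequently your point $\vecu$, placed just across the diagonal on the $\vecv$-free side at small distance $s$ from $\vecm$, lies in $\interior{P}$, and the open segments $\lso[\veca,\vecu]$ and $\lso[\vecb,\vecu]$ then meet $\interior{P}$, so condition (2) cannot hold; these segments do not ``emanate into the exterior wedge opposite $T$'' --- they run alongside the diagonal through the interior of $P$. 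At the same time, taking $\vecw$ at small distance $t$ from $\vecm$ on the $\vecv$-side keeps $Q=\veca\vecu\vecb\vecw$ inside a thin sliver around the diagonal, so the vertex $\vecv$ of $T$ is not in $\interior{Q}$ and condition (1) fails. Your opening plan is already internally inconsistent on exactly this point: a point on the $\vecv$-side ``beyond'' $\vecv$ lies \emph{outside} $P$, not inside, and a point just across the diagonal lies \emph{inside} $P$, not outside.

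The correct placement is the opposite one, and it is what the paper does: take $\vecw$ on the $\vecv$-free side of the diagonal, inside $\interior{P}$ (the paper puts it in the interior of the triangle adjacent to $T$ across $\ls[\veca,\vecb]$ in a triangulation of $P$, and on the median of $T$ through $\vecv$ and $\vecm$, so that the diagonals of $Q$ meet at the interior point $\vecm$ of both, which yields convexity); and take $\vecu=(1+t)\vecv-t\vecw$ just \emph{beyond the ear vertex} $\vecv$, which is in the complement of $P$ since the interior angle at an ear is convex. The obstacle you correctly identify for condition (2) --- that $\partial P$ might fold back towards $\veca$ or $\vecb$ --- is then handled in the paper not by a metric margin but by a finiteness argument: if $\lso[\veca,\vecu(t)]$ or $\lso[\vecb,\vecu(t)]$ met $P$, some vertex of $P$ would have to lie in the quadrilateral $\veca\vecv\vecb\vecu(t)$ or on one of its two new sides, and since there are only finitely many vertices this is excluded for $t$ small enough. (A distance from $T\setminus\{\veca,\vecb\}$ to the far part of the boundary, as you propose, need not even be positive without further care, since $\partial P$ can come arbitrarily close to the ear edges near $\veca$ and $\vecb$ when the interior angles there are small.) If you swap the roles of your two auxiliary points and replace the margin argument by the vertex-counting one, your construction becomes essentially the paper's.
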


\begin{proof} We begin by triangulating $P$ using the standard algorithm of removing one ear at a time. We may clearly start by dealing with the ear at $\vecv$ and so $T$ is one of the triangles in our triangulation.  The diagonal $\ls[\veca,\vecb]$ must form an edge of two of the triangles, namely $T$, and another which we shall denote by $T_1$. One can choose $\vecw$ to be any interior point of $T_1$ and this will clearly have property~3.  Let $\vecm$ denote the midpoint of $\ls[\veca,\vecb]$ and let $\ell$ denote the median of $T$ that passes through $\vecv$ and $\vecm$. As we shall see below, if we demand that $\vecw$ also lies on $\ell$ then this will ensure that the quadrilateral $Q$ is convex. (See Figure~\ref{get-w}.)

\begin{figure}[!ht]
\begin{center}
\begin{tikzpicture}[scale=0.8]
%
%
 \draw[thick,black] (-2,-2) -- (0.5,-1) -- (0,0) -- (3,5) -- (8,0) -- (6,-2) -- (8,-2);
 \draw[blue]  (0,0) -- (7.98,0) -- (5.98,-2) -- (0,0);
 \draw[black] (5,2) node[below] {$T$};
 \draw[black] (6.4,-0.6) node[below] {$T_1$};
 \draw[red,dashed] (2.8,6) -- (4.4,-2);
 \draw[red,dashed] (0,0) -- (4.2,-1) -- (8,0);
 \draw[red] (4,0) node[circle, draw, fill=black!50,inner sep=0pt, minimum width=4pt] {};
 \draw[red] (4.2,-1) node[circle, draw, fill=black!50,inner sep=0pt, minimum width=4pt] {};
 \draw[black] (0,0) node[left] {$\veca$};
 \draw[black] (3,5) node[right] {$\vecv$};
 \draw[black] (8,0) node[right] {$\vecb$};
 \draw[black] (3.2,3) node[left] {$\ell$};
 \draw[black] (3.9,0.2) node[left] {$\vecm$};
 \draw[black] (4.3,-1.2) node[right] {$\vecw$};
\end{tikzpicture}
\caption{Construction of $\vecw$.}\label{get-w}
\end{center}
\end{figure}
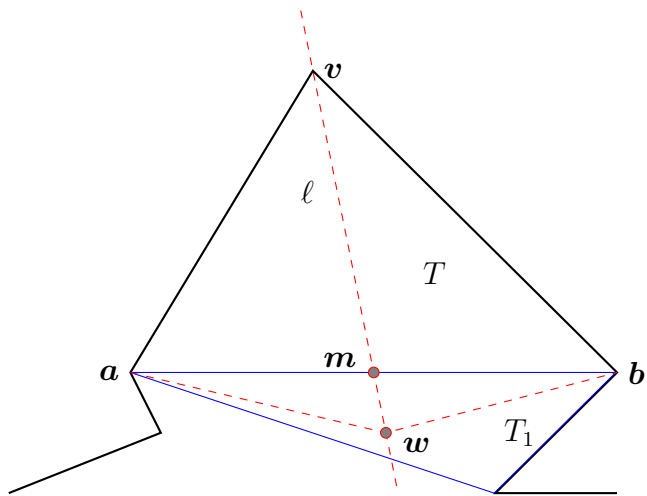

Finding a suitable point $\vecu$ is slightly more delicate. For small $t > 0$, $\vecu(t) = (1+t) \vecv - t \vecw$ lies in the complement of $P$. If $\lso[\veca,\vecu(t)]$ or $\lso[\vecb,\vecu(t)]$ does not lie in the complement of $P$ then it must be the case that some vertices of $P$ lie in the interior of the quadrilateral $\veca \vecv \vecb \vecu(t)$ or on one of the boundary lines $\ls[\veca,\vecu(t)]$ or $\ls[\vecb,\vecu(t)]$. Since there can be only finitely many such vertices, by choosing $t_0 > 0$  sufficiently small we can ensure that $\vecu = \vecu(t_0)$ satisfies condition (2).

Property~(1) is clear so it remains to check convexity. By the construction, the two diagonals of $Q$ will meet at $\vecm$ and clearly $\vecm \in \lso[\veca,\vecb]$ and  $\vecm \in \lso[\vecu,\vecw]$. But by Theorem~6.7.9 of \cite{V} a quadrilateral is convex if and only if the diagonals meet at a point in the interior of these diagonals, and hence $Q$ is convex.
\end{proof}

\begin{thm}\label{polygons}
Suppose that $P_1$ and $P_2$ are simple polygons. Then $\AC(P_1) \simeq \AC(P_2)$.
\end{thm}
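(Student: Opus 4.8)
The plan is to argue by induction on the number $n$ of vertices of a simple polygon, showing that every $\AC(P)$ is isomorphic to $\AC(T_0)$ for one fixed reference triangle $T_0$; the theorem then follows at once by transitivity of $\simeq$. For the base case $n=3$ I note that any two triangles are related by an invertible affine map of $\mR^2$, so the affine invariance recorded just after Corollary~\ref{pres_var} gives $\AC(P)\simeq \AC(T_0)$ (indeed isometrically). The entire content is therefore the inductive step: if $P$ is a simple polygon with $n\ge 4$ vertices, then $\AC(P)\simeq \AC(P')$ for a simple polygon $P'$ with fewer vertices.

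To produce $P'$ I would first invoke the Two Ears Theorem to select an ear $\vecv$ of $P$ with neighbouring vertices $\veca,\vecb$ and ear triangle $T=\triangle\veca\vecv\vecb$, and set $P'=\overline{P\setminus T}$, which is again a simple polygon with at most $n-1$ vertices. The ear is then flattened by a single locally piecewise affine map. Applying Lemma~\ref{construct-Q} I obtain the convex quadrilateral $Q$ with vertices $\veca,\vecu,\vecb,\vecw$, so that $\ls[\veca,\vecb]$ is a diagonal of $Q$, the ear $T$ lies in the half of $Q$ on the $\vecu$-side of $\ls[\veca,\vecb]$, and the $\vecw$-side triangle $\triangle\veca\vecw\vecb$ lies in $\interior{P}$. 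Let $\vecm$ be the midpoint of $\ls[\veca,\vecb]$ and let $h$ be the locally piecewise affine map determined by $(Q,\mathrm{id},\vecv,\vecm)$ via Lemma~\ref{vertex-move}. Since $\vecm\in\interior{Q}$ lies on no side of $Q$, the associated triangulation from $\vecm$ is nondegenerate, so $h$ is a genuine plane homeomorphism that is the identity off $Q$ and carries the triangulation of $Q$ from $\vecv$ onto the one from $\vecm$.

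The crux is to verify that $h(P)=P'$. Because $\lso[\veca,\vecu]$ and $\lso[\vecb,\vecu]$ lie in the complement of $P$ while $\lso[\veca,\vecw]$ and $\lso[\vecb,\vecw]$ lie in $\interior{P}$, the boundary $\partial P$ meets $\partial Q$ only at $\veca$ and $\vecb$; hence $\partial P\cap\overline{Q}$ consists of exactly the two ear edges $\ls[\veca,\vecv]$ and $\ls[\vecv,\vecb]$, and $P\cap Q$ is the convex quadrilateral $\veca\vecv\vecb\vecw$. As $\ls[\veca,\vecv]$ and $\ls[\vecv,\vecb]$ are edges of the triangulation from $\vecv$ (they join the centre $\vecv$ to the corners $\veca,\vecb$ of $Q$), the map $h$ sends them affinely to $\ls[\veca,\vecm]$ and $\ls[\vecm,\vecb]$, whose union is the straight segment $\ls[\veca,\vecb]$. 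Thus $h$ straightens the ear, fixes everything outside $Q$, and maps $\veca\vecv\vecb\vecw$ onto $\triangle\veca\vecw\vecb = P'\cap Q$, so that $h(P)=P'$. Theorem~\ref{lpam-isom} then yields $\AC(P)\simeq\AC(P')$, completing the inductive step.

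I expect the main obstacle to be exactly this geometric verification: one must use all three properties of $Q$ furnished by Lemma~\ref{construct-Q} to be certain that $Q$ captures the ear and nothing else of $\partial P$, so that the flattened image $h(P)$ is genuinely the simple polygon $P'$, with no spurious vertices or self-intersections created by the distortion of the $\vecw$-side region. The remaining ingredients, namely the affine equivalence of triangles and the bookkeeping of the induction together with transitivity of $\simeq$, are then routine.
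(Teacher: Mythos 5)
Your proposal is correct and follows essentially the same route as the paper: induction on the number of vertices via the Two Ears Theorem, using Lemma~\ref{construct-Q} to build the convex quadrilateral $Q$ and the locally piecewise affine map determined by $(Q,\mathrm{id},\vecv,\vecm)$ to flatten the ear, then invoking Theorem~\ref{lpam-isom} and the affine equivalence of triangles for the base case. Your verification that $h(P)$ is the polygon with the ear removed is in fact spelled out in more detail than in the paper, which simply observes that the two ear edges map onto the diagonal and all other edges are fixed.
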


\begin{proof}
We shall use induction to prove that
the statement
  \[ S(n): \ \hbox{if $P$ is any simple $n$-gon and $T$ is a triangle, then $\AC(P) \simeq \AC(T)$} \]
holds for all $n \ge 3$.

The statement is true for $n = 3$ since one can find an affine map between any two triangles. Suppose then that $n > 3$ and that the $S(m)$ is true for all $m$ with $3 \le m < n$. Let $P$ be an $n$-gon with vertices $\vecv_1,\dots,\vecv_n$. By the Two Ears Theorem, there exists an ear $\vecv_j$. Let $T_{\vecv_j}$ be the triangle with vertices at $\vecv_{j-1}$, $\vecv_j$ and $\vecv_{j+1}$.

Using Lemma~\ref{construct-Q}
fix a convex quadrilateral $Q$ with vertices at $\vecv_{j-1}$ and $\vecv_{j+1}$ and two additional points $\vecu, \vecw$ chosen so that $T_{\vecv_j}\setminus \{\vecv_{j-1},\vecv_{j+1}\}$ lies in the interior of $Q$ and so that $\lso[\vecv_{j-1},\vecw]$ and $\lso[\vecv_{j+1},\vecw]$ lie in the interior of $P$. As $Q$ is convex, the point $\vecv_j$ and the midpoint $\vecm$ of $\ls[\vecv_{j-1} ,\vecv_{j+1}]$ are both interior points of $Q$. 


%
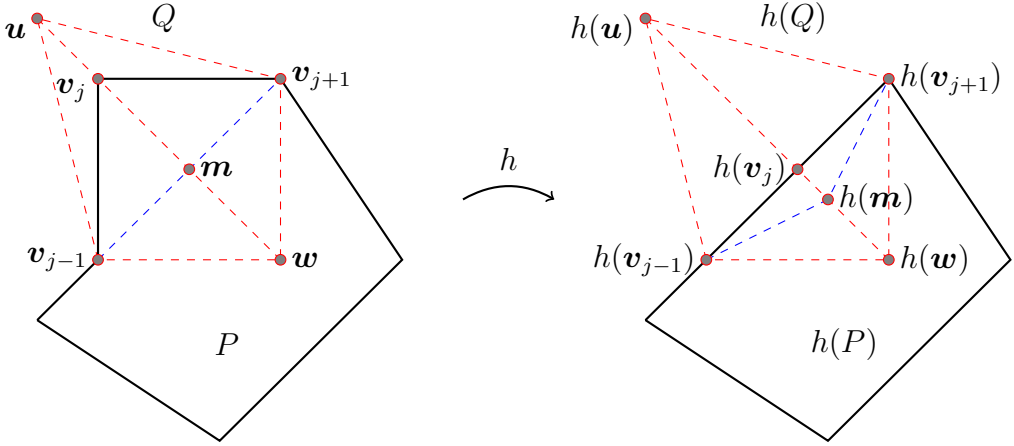
\begin{figure}[!ht]
\begin{center}
\begin{tikzpicture}[scale=0.8]
%
%
 \draw[thick,black] (-1,-1) -- (0,0) -- (0,3) -- (3,3) -- (5,0) -- (2,-3) -- (-1,-1);
 \draw[blue,dashed]  (0,0) -- (3,3);
 \draw[red,dashed] (0,0) -- (-1,4) -- (3,3) -- (3,0) -- (0,0);
 \draw[red,dashed] (-1,4) -- (3,0);
 \draw[red] (0,3) node[circle, draw, fill=black!50,inner sep=0pt, minimum width=4pt] {};
 \draw[black] (0,2.8) node[left] {$\vecv_j$};
 \draw[red] (0,0) node[circle, draw, fill=black!50,inner sep=0pt, minimum width=4pt] {};
 \draw[black] (0,0) node[left] {$\vecv_{j-1}$};
 \draw[red] (3,3) node[circle, draw, fill=black!50,inner sep=0pt, minimum width=4pt] {};
 \draw[black] (3,3) node[right] {$\vecv_{j+1}$};
 \draw[red] (-1,4) node[circle, draw, fill=black!50,inner sep=0pt, minimum width=4pt] {};
 \draw[black] (-1,3.8) node[left] {$\vecu$};
 \draw[red] (1.5,1.5) node[circle, draw, fill=black!50,inner sep=0pt, minimum width=4pt] {};
 \draw[black] (1.5,1.5) node[right] {$\vecm$};
 \draw[red] (3,0) node[circle, draw, fill=black!50,inner sep=0pt, minimum width=4pt] {};
 \draw[black] (3,0) node[right] {$\vecw$};
 \draw[black] (2.5,-1.4) node[left] {$P$};
 \draw[black] (0.7,4) node[right] {$Q$};

  \path[thick,->] (6,1) edge [bend left] (7.5,1);
  \draw[black] (6.75,1.3) node[above] {$h$};

 \draw[thick,black] (9,-1) -- (10,0) -- (13,3) -- (15,0) -- (12,-3) -- (9,-1);
 \draw[blue,dashed]  (10,0) -- (12,1) -- (13,3);
 \draw[red,dashed] (10,0) -- (9,4) -- (13,3) -- (13,0) -- (10,0);
 \draw[red,dashed] (9,4) -- (13,0);
 \draw[red] (11.5,1.5) node[circle, draw, fill=black!50,inner sep=0pt, minimum width=4pt] {};
 \draw[black] (11.5,1.5) node[left] {$h(\vecv_j)$};
 \draw[red] (10,0) node[circle, draw, fill=black!50,inner sep=0pt, minimum width=4pt] {};
 \draw[black] (10,0) node[left] {$h(\vecv_{j-1})$};
 \draw[red] (13,3) node[circle, draw, fill=black!50,inner sep=0pt, minimum width=4pt] {};
 \draw[black] (13,3) node[right] {$h(\vecv_{j+1})$};
 \draw[red] (9,4) node[circle, draw, fill=black!50,inner sep=0pt, minimum width=4pt] {};
 \draw[black] (9,3.8) node[left] {$h(\vecu)$};
 \draw[red] (12,1) node[circle, draw, fill=black!50,inner sep=0pt, minimum width=4pt] {};
 \draw[black] (12,1) node[right] {$h(\vecm)$};
 \draw[red] (13,0) node[circle, draw, fill=black!50,inner sep=0pt, minimum width=4pt] {};
 \draw[black] (13,0) node[right] {$h(\vecw)$};
 \draw[black] (13,-1.4) node[left] {$h(P)$};
 \draw[black] (10.7,4) node[right] {$h(Q)$};
\end{tikzpicture}
\caption{The action of $h$.}\label{cut-ear}
\end{center}
\end{figure}

Let $\alpha$ denote the identity map on $\mR^2$, and let $h$ denote the unique locally piecewise affine map determined by $(Q,\alpha,\vecv_j,\vecm)$.
This map sends $\ls[\vecv_{j-1} ,\vecv_j]$ to $\ls[\vecv_{j-1} ,\vecm]$ and $\ls[\vecv_j ,\vecv_{j+1}]$ to $\ls[\vecm ,\vecv_{j+1}]$ (see Figure~\ref{cut-ear}). All the other edges of $P$ lie in the complement of $Q$ and are therefore fixed. It follows therefore that the image of $P$ under $h$ is an $m$-gon for some $m < n$. By Theorem~\ref{lpam-isom}, $\AC(P) \simeq \AC(h(P))$. But by the induction hypothesis, $\AC(h(P)) \simeq \AC(T)$ for any triangle $T$ and so the proof is complete.
\end{proof}

\section{Polygonal regions with holes}\label{poly-with-holes}

The results of the last section have a natural extension to a wider class of regions. Let $P$ be a simple polygon in the plane. A set $W$ is a \textbf{window} in $P$ if it is the interior of a polygon $P'$ where $P'$ lies in the interior of $P$.
We shall say that a compact set $\sigma$ is a \textbf{polygonal region of genus $n$} if there exists a simple polygon $P$ with $n$ nonoverlapping  windows $W_1,\dots,W_n$ such that
  \[ \sigma = P \setminus (W_1 \cup \dots \cup W_n)  \]
and write $G(\sigma) = n$ for the genus of $\sigma$.

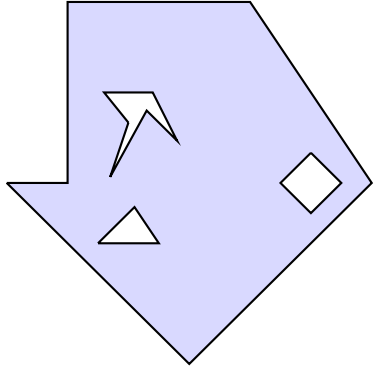
\begin{figure}[!ht]
\begin{center}
\begin{tikzpicture}[scale=0.8]
%
%
 \fill[blue!15] (-1,0) -- (0,0) -- (0,3) -- (3,3) -- (5,0) -- (2,-3) -- (-1,0);
 \draw[thick,black] (-1,0) -- (0,0) -- (0,3) -- (3,3) -- (5,0) -- (2,-3) -- (-1,0);
 \fill[white] (1,1) -- (0.6,1.5) -- (1.4,1.5) -- (1.8,0.7) -- (1.3,1.2) -- (0.7,0.1) -- (1,1);
 \draw[thick,black]  (1,1) -- (0.6,1.5) -- (1.4,1.5) -- (1.8,0.7) -- (1.3,1.2) -- (0.7,0.1) -- (1,1);
 \fill[white] (0.5,-1) -- (1.5,-1) -- (1.1,-0.4) -- (0.5,-1);
 \draw[thick,black] (0.5,-1) -- (1.5,-1) -- (1.1,-0.4) -- (0.5,-1);
 \fill[white] (4,0.5) -- (4.5,0) -- (4,-0.5) -- (3.5,0) -- (4,0.5);
 \draw[thick,black] (4,0.5) -- (4.5,0) -- (4,-0.5) -- (3.5,0) -- (4,0.5);
\end{tikzpicture}
\caption{A polygonal region of genus $3$.}
\end{center}
\end{figure}

If $\sigma_1$ and $\sigma$ two are polygonal regions of differing genus, then these sets are not homeomorphic and hence $\AC(\sigma_1) \not\simeq \AC(\sigma_2)$.
In this section we shall show that show within this class of sets, the isomorphism class of the the corresponding function algebras is completely determined by their genus. This is achieved by showing that there is always a finite sequence of locally piecewise affine maps whose composition sends $\sigma_1$ to $\sigma$, and then applying Theorem~\ref{lpam-isom}. One of the main tools in doing this is to show that via such maps, one may `move' triangular windows anywhere within any rectangle that contains no other other windows.

\begin{lem}\label{TinR}
Suppose that $R$ is a rectangle and that $T = \triangle\veca\vecb\vecc$ and $T' = \triangle \veca' \vecb'\vecc'$ are two triangles in the interior of $R$. Then there is a continuous bijection $h: \mR^2 \to \mR^2$ such that
\begin{enumerate}
 \item $h$ can be written as a composition of finitely many locally piecewise affine maps,
 \item $h(\vecx) = \vecx$ for all $\vecx \not\in R$,
 \item $h(R) = R$, and
 \item $h(T) = T'$.
\end{enumerate}
\end{lem}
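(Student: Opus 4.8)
The plan is to realise $h$ as a finite composition of the elementary moves supplied by Lemma~\ref{triangle-move}, and to show by a connectedness argument that $T$ can be carried onto $T'$ by finitely many of them. Call a self-homeomorphism $g$ of $\mathbb{R}^2$ \emph{admissible} if it is a finite composition of locally piecewise affine maps, fixes every point outside $R$, and satisfies $g(R)=R$. Every map furnished by Lemma~\ref{triangle-move} whose governing quadrilateral $Q$ is chosen inside $\interior R$ is admissible: it is a single locally piecewise affine map that fixes the complement of $Q$, and since it carries $Q$ onto $Q$ it preserves $R$. By Lemma~\ref{lpam-inv} (applied to each factor) the admissible maps are closed under inversion, and they are trivially closed under composition. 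Consequently, ``there is an admissible map carrying $T_1$ onto $T_2$'' is an equivalence relation on the collection $\mathcal{T}$ of nondegenerate triangles contained in $\interior R$; write $T_1 \sim T_2$. The assertion of the lemma is precisely that $T \sim T'$.

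The single elementary step is this: if $\triangle pqr$ and $\triangle p'qr$ lie in $\interior R$ with $p$ and $p'$ strictly on the same side of the line through $q$ and $r$, and if a convex quadrilateral with side $\overline{qr}$ can be fitted inside $\interior R$ containing both triangles with $p,p'$ in its interior, then Lemma~\ref{triangle-move} gives $\triangle pqr \sim \triangle p'qr$. I would apply this only for \emph{small} displacements $p \to p'$, where the required quadrilateral is produced simply by enlarging the triangle $\triangle qrp$ slightly away from its base $\overline{qr}$: since $\triangle qrp$ is a compact subset of the open convex set $\interior R$, such an enlargement still lies in $\interior R$. Using this I would establish local reachability: every $T_1 = \triangle pqr$ in $\mathcal{T}$ has a neighbourhood (in the Hausdorff metric) of triangles each $\sim$-equivalent to it. Indeed a triangle-set sufficiently close to $T_1$ has vertices $p',q',r'$ close to $p,q,r$; the three successive single-vertex moves $p\to p'$ (fixing $q,r$), then $q\to q'$ (fixing $p',r$), then $r\to r'$ (fixing $p',q'$) are each same-side moves, because a vertex lying strictly off its opposite edge cannot cross that line under a sufficiently small displacement, and each admits a governing quadrilateral inside $\interior R$ by the remark just made. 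Thus every $\sim$-class is open in $\mathcal{T}$.

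It remains to observe that $\mathcal{T}$ is connected. Ordered noncollinear triples in the convex set $\interior R$ split into two connected orientation classes, and the relabelling permutations in $S_3$ (which include orientation-reversing ones) identify these classes once one passes to triangles regarded as sets; hence $\mathcal{T}$ is connected. An open partition of a connected space is trivial, so $\mathcal{T}$ consists of a single $\sim$-class, giving $T \sim T'$ and with it the desired admissible map $h$, which is automatically a continuous bijection satisfying conditions (1)--(4). I expect the only delicate point to be local reachability: one must verify simultaneously that each small vertex move keeps the moving vertex on the correct side of its fixed edge and that a governing quadrilateral still fits inside $\interior R$. Both are guaranteed by the convexity of $R$ together with the fact that $T$ and $T'$, being treated as unlabelled sets, carry no orientation obstruction to being joined.
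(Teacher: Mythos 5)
Your argument is correct, and it takes a genuinely different route from the paper's. Both proofs use the single‑vertex move of Lemma~\ref{triangle-move} as the basic building block, but where you deduce reachability abstractly --- admissible maps form a group under composition and inversion, the induced relation on the space $\mathcal{T}$ of nondegenerate triangles in $\interior{R}$ has open classes by a small‑perturbation argument, and $\mathcal{T}$ is connected, so there is only one class --- the paper constructs an explicit chain: it fixes a reference triangle $T_0$ whose vertices are three ``$\epsilon$‑corner points'' of $R$, moves the vertices of $T$ onto them one at a time (at each stage cutting $R$, or the residual quadrilateral, along the line through the two vertices being held fixed and applying Lemma~\ref{vertex-move} to the convex piece containing the moving vertex), and obtains $T\to T'$ by composing the chain for $T$ with the inverse of the chain for $T'$. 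Your approach is cleaner, avoids the bookkeeping about which vertex of $R$ lies in which half‑plane, and would work verbatim with $R$ replaced by any convex body; what it gives up is any control over the number of elementary moves, and hence over the norm of the resulting isomorphism of $\BV$ and $\AC$ spaces (the constant of Lemma~\ref{lpam-vf} compounds with each factor), whereas the paper's construction uses a fixed small number of locally piecewise affine maps. Two points to tighten in a written version: ``enlarging the triangle away from its base'' produces a triangle, not a quadrilateral --- you must replace the apex $p$ by two nearby points (for instance on the rays from $q$ and from $r$ through $p$, just beyond $p$) to get a genuine convex quadrilateral with $\overline{qr}$ as a side and with $p$ and $p'$ in its interior; and the connectedness of each orientation class of noncollinear ordered triples in $\interior{R}$ deserves a line of proof (contract a triple toward its centroid, then translate within the convex set).
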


\begin{proof} Choose $\epsilon > 0$ such that no point of $T$ or $T'$ lies within distance $2 \epsilon$ of the boundary of $R$. We shall call the four interior points of $R$ which lie at distance $\epsilon$ along the diagonals from the vertices of $R$, the $\epsilon$-corner points of $R$. Fix any three of these $\epsilon$-corner points and let $T_0$ denote the triangle with these points as vertices. We shall show that there is a function $h$ satisfying (1), (2) and (3) and such that $h(T) = T_0$. The same proof of course would construct a corresponding map sending $T'$ to $T_0$. Since the inverse of a locally piecewise affine map is also locally piecewise affine, this produces a finite sequence of locally piecewise affine maps which has properties (1) -- (4).

The line through $\vecb$ and $\vecc$ splits $R$ into two convex polygons.  Let $P$ denote the polygon which contains $\veca$. At least one of the vertices of $P$, say $\vecv_1$,  is a vertex of $R$ not lying on the line through $\vecb$ and $\vecc$. 

Let $\veca_1$ be the $\epsilon$-corner point of $R$ near $\vecv_1$. 
Using the triangulations of $P$ generated by $\veca$ and by $\veca_1$,  Lemma~\ref{vertex-move} produces a locally piecewise affine map $h_1$ which is the identity outside of $P$, and which maps $\veca$ to $\veca_1$. Indeed, as $T$ lies entirely in a region on which $h_1$ is affine, $h_1$ maps $T$ to the triangle $\triangle \veca_1 \vecb \vecc$ (see Figure~\ref{move-vert-1}).

\begin{figure}[!ht]
\begin{center}
\begin{tikzpicture}[scale=0.8]
%
%
 \fill[blue!15]  (0,0) -- (0,5) -- (4,5) --  (4,0) -- (0,0);
 \draw[thick,black] (0,0) -- (0,5) -- (4,5) --  (4,0) -- (0,0); 
 \fill[white] (2,2) -- (3,3) -- (1.3,3.2) -- (2,2);
 \draw[thick,black]  (2,2) -- (3,3) -- (1.3,3.2) -- (2,2);
 \draw[thick,blue]  (-0.5,-0.5) -- (4.5,4.5);
 \draw[red,dashed] (0,0) -- (1.3,3.2);
 \draw[red,dashed] (0,5) -- (1.3,3.2);
 \draw[red,dashed] (4,5) -- (1.3,3.2);
 \draw[red,dashed] (4,4) -- (1.3,3.2);
  \draw[red] (2,2) node[circle, draw, fill=black!50,inner sep=0pt, minimum width=4pt] {};
   \draw[red] (3,3) node[circle, draw, fill=black!50,inner sep=0pt, minimum width=4pt] {};
  \draw[red] (1.3,3.2) node[circle, draw, fill=black!50,inner sep=0pt, minimum width=4pt] {};
  \draw[red] (0,5) node[circle, draw, fill=black!50,inner sep=0pt, minimum width=4pt] {};
 \draw[black] (2,2) node[right] {$\vecb$};
 \draw[black] (3,3) node[right] {$\vecc$};
 \draw[black] (1,3.2) node[left] {$\veca$};
  \draw[black] (0,5) node[left] {$\vecv_1$};
 \draw[black] (2,4.3) node[left] {$P$};

   \path[thick,->] (5.5,4) edge [bend left] (7.0,4);
  \draw[black] (6.25,4.3) node[above] {$h_1$};
  
  \fill[blue!15]  (8,0) -- (8,5) -- (12,5) --  (12,0) -- (8,0);
 \draw[thick,black] (8,0) -- (8,5) -- (12,5) --  (12,0) -- (8,0);
 \fill[white] (10,2) -- (11,3) -- (8.3,4.65) -- (10,2);
 \draw[thick,black] (10,2) -- (11,3) -- (8.3,4.65) -- (10,2);
 \draw[thick,blue]  (7.5,-0.5) -- (12.5,4.5);
 \draw[red,dashed] (8,0) -- (8.3,4.65);
 \draw[red,dashed] (8,5) -- (8.3,4.65);
 \draw[red,dashed] (12,5) -- (8.3,4.65);
 \draw[red,dashed] (12,4) -- (8.3,4.65);
   \draw[red] (10,2) node[circle, draw, fill=black!50,inner sep=0pt, minimum width=4pt] {};
   \draw[red] (11,3) node[circle, draw, fill=black!50,inner sep=0pt, minimum width=4pt] {};
    \draw[red] (8.3,4.65) node[circle, draw, fill=black!50,inner sep=0pt, minimum width=4pt] {};
  \draw[red] (8,5) node[circle, draw, fill=black!50,inner sep=0pt, minimum width=4pt] {};
 \draw[black] (10,2) node[right] {$\vecb$};
 \draw[black] (11,3) node[right] {$\vecc$};
  \draw[black] (8,5) node[left] {$\vecv_1$};
 \draw[black] (8.3,4.45) node[below] {$\veca_1$};
\end{tikzpicture}
\caption{Moving the first vertex in Lemma~\ref{TinR}.}\label{move-vert-1}
\end{center}
\end{figure}
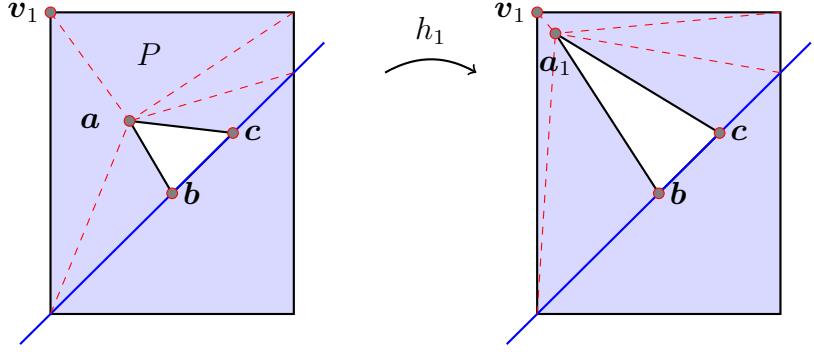

Consider now the quadrilateral $Q$ with vertices at $\veca_1$ and the three vertices of $R$ other than $\vecv_1$. Note that the position of $\veca_1$ ensures that $Q$ is convex, and hence that the line through $\veca_1$ and $\vecc$ splits $Q$ into two convex polygons. Let $P_1$ denote the polygon containing $\vecb$. One of the vertices of $Q$ adjacent to $\veca_1$ (which is therefore also a vertex of $R$) must lie in $P_1$. Denote this vertex by $\vecv_2$ and let $\vecb_1$ be the $\epsilon$-corner point of $R$ near $\vecv_2$. (Note that $\vecb_1$ must lie in $P_1$.)
 Applying Lemma~\ref{vertex-move} again we produce a locally piecewise affine map $h_2$ which is the identity outside of $Q$ and which maps $\triangle \veca_1 \vecb \vecc$ onto $\triangle \veca_1 \vecb_1 \vecc$.

Finally, consider the convex quadrilateral $Q_1$ with vertices $\veca_1,\vecb_1$ and the two remaining vertices of $R$. Let  $\vecc_1$ be the $\epsilon$-corner point of $R$ near one of these remaining vertices of $R$.  Noting that $\vecc$ and $\vecc_1$ are both in the interior of $Q_1$ we can find a locally piecewise affine map $h_3$ which is the identity outside of $Q_1$ and which maps $\triangle \veca_1 \vecb_1 \vecc$ onto $\triangle \veca_1 \vecb_1 \vecc_1$.

The vertices of this final triangle are all $\epsilon$-corner points. With one or two further applications of locally piecewise affine maps we can arrange that the image of $T$ under this composition of maps is $T_0$.
\end{proof}

\begin{thm}
Suppose that $\sigma_1$ and $\sigma_2$ are polygonal regions of genus $n_1$ and $n_2$. Then $\AC(\sigma_1) \simeq \AC(\sigma_2)$ if and only if $n_1 = n_2$.
\end{thm}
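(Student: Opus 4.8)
I would first dispatch the ``only if'' direction, which is immediate. If $\AC(\sigma_1) \simeq \AC(\sigma_2)$ then Theorem~\ref{alg_props} provides a homeomorphism $h : \sigma_1 \to \sigma_2$. As already noted at the start of this section, polygonal regions of differing genus are not homeomorphic (for instance because a polygonal region of genus $n$ is homotopy equivalent to a wedge of $n$ circles, so $G(\sigma)$ is a topological invariant), and hence $n_1 = n_2$.

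For the converse, suppose $n_1 = n_2 = n$. The plan is to fix a single model region $\sigma^\ast_n$ of genus $n$---say a large square $R^\ast$ containing $n$ congruent triangular windows placed in a standard configuration, evenly spaced along a horizontal segment well inside its interior---and to show that every genus-$n$ polygonal region $\sigma$ can be carried onto $\sigma^\ast_n$ by a finite composition $H = h_k \circ \cdots \circ h_1$ of locally piecewise affine maps. Granting this, Lemma~\ref{lpam-inv} and Theorem~\ref{lpam-isom} give $\AC(\sigma) \simeq \AC(h_1(\sigma)) \simeq \cdots \simeq \AC(\sigma^\ast_n)$, and transitivity of $\simeq$ then yields $\AC(\sigma_1) \simeq \AC(\sigma^\ast_n) \simeq \AC(\sigma_2)$.

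I would assemble $H$ in four stages. First, reduce each window to a triangle: since every window $W_i$ is compactly contained in $\interior{P}$ and the windows are nonoverlapping, I can fix pairwise disjoint neighbourhoods of the $W_i$, each disjoint from the boundary of $P$, and within each neighbourhood triangulate the window polygon and cut its ears exactly as in the proof of Theorem~\ref{polygons}, taking every ear-quadrilateral small enough to remain inside that neighbourhood; as each ear-cut is locally piecewise affine and is the identity off its quadrilateral, their composition replaces every window by a triangular one and fixes everything else. Second, clear the boundary: using Lemma~\ref{TinR} I move all the (now triangular) windows into a small rectangle $R_0$ lying well inside $P$ and away from $\partial P$. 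Third, standardise the outer boundary: applying the ear-cutting maps of Theorem~\ref{polygons} to $P$, followed by one affine map, I turn $P$ into the model square $R^\ast$; because every window now lies in $R_0$, the supporting ear-quadrilaterals can be chosen disjoint from $R_0$, so the windows are moved only by the affine pieces and remain triangular. Fourth, distribute the windows: Lemma~\ref{TinR} is applied once more, moving the windows from $R_0$ one at a time into their designated slots in $\sigma^\ast_n$.

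The main obstacle is the bookkeeping required to guarantee non-interference between these operations. Lemma~\ref{TinR} relocates a triangle only inside a \emph{window-free} rectangle, and the ear-cutting supports must avoid the remaining windows, so stages two and four cannot be carried out naively: relocating one window may be blocked by another lying in the way. The resolution is to process the windows in a fixed order (for example, gathering them into $R_0$, and later filling the standard slots, from one end to the other) and, for each move, to exhibit a rectangular corridor containing the window's current and target positions but no other window. Since there are only finitely many windows and each is compactly contained in the interior, at every stage there remains enough free space to find the required rectangle, so $H$ is a finite composition of locally piecewise affine maps; carrying out this routing explicitly is essentially the only nontrivial point in the argument.
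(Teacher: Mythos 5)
Your overall architecture --- genus as a topological invariant for the ``only if'' direction, and for the converse a reduction of every genus-$n$ region onto a fixed model by a finite composition of locally piecewise affine maps, concluding via Theorem~\ref{lpam-isom} --- is exactly the paper's. However, your ordering of the stages introduces a step that fails. In Stage~3 you claim that, once the windows have been gathered into a small rectangle $R_0 \subseteq \interior{P}$, the ear-cutting quadrilaterals used to reduce the outer polygon $P$ can be chosen disjoint from $R_0$. In general they cannot: the quadrilateral $Q$ of Lemma~\ref{construct-Q} must contain the ear triangle $\triangle \vecv_{j-1}\vecv_j\vecv_{j+1}$, which is dictated by the geometry of $P$ and cannot be shrunk, and the successive ear triangles produced in reducing $P$ to a triangle form a triangulation of $P$, so their union is all of $P$. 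Hence any nonempty open $R_0$ meets some ear quadrilateral, and the corresponding locally piecewise affine map, being only piecewise affine on $R_0$, can turn a triangular window into a polygon with more vertices, undoing Stages~1 and~2. (One could rescue your ordering by placing $R_0$ inside the final triangle of a triangulation of $P$ fixed in advance and pruning ears in a compatible order, but you would need to say this.) The paper sidesteps the issue by reversing your Stages~1--3: it reduces the outer polygon first, explicitly accepting that the windows may acquire extra vertices in the process, and only afterwards reduces the windows to triangles.

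The second problem is the routing in Stages~2 and~4, which you correctly identify as the crux but then dispatch by asserting that one can always ``exhibit a rectangular corridor containing the window's current and target positions but no other window.'' A single such rectangle need not exist: the remaining windows can surround a given one so that every rectangle joining it to a distant target meets one of them. What is true is that the window can first be shrunk (an observation the paper extracts from Lemma~\ref{triangle-move}) and then walked to its target through a finite chain of small window-free rectangles, since the complement of the other windows in $\interior{P}$ is open and connected. This is precisely the content of the paper's explicit algorithm: shrink each triangle toward its lexicographically smallest vertex until the triangles have disjoint projections onto the $x$-axis and sit inside disjoint rectangles $R_k$, normalise each triangle within its $R_k$, and then make two ordered passes (left-to-right, then right-to-left) to place them in the standard slots. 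Without some such concrete scheme your proof is incomplete at its acknowledged central point.
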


\begin{proof}
As noted above it only remains to show the `if' part of the theorem. Fix a genus $n$.
Let $\tau$ denote the polygonal region of genus $n$
  \[ \tau = T \setminus (T_1 \cup \dots \cup T_n)  \]
where $T$ is the triangle with vertices at $(0,-1)$, $(1,0)$ and $(0,1)$ and, for $k = 1,\dots,n$, the window $T_n$ is the triangle with vertices at $(\frac{3k-2}{3n},0)$, $(\frac{3k-1}{3n},0)$ and $(\frac{2k-1}{2n},\frac{1}{3n})$. We shall proceed by showing that if $\sigma$ is any polygonal regions of genus $n$, then $\AC(\sigma) \simeq \AC(\tau)$.

\begin{figure}[!hb]
\begin{center}
\begin{tikzpicture}[scale=0.6]
%
%
 \fill[blue!15] (-2,-5) -- (6,0) -- (-2,5) -- (-2,-5);
 \draw[very thick,black] (-2,-5) -- (6,0) -- (-2,5) -- (-2,-5);

 \fill[white] (-1,0) -- (0,0) -- (0,1.5) -- (1.5,1.5) -- (3.5,0) -- (1,-1) -- (-1,0);
 \draw[thick,black] (-1,0) -- (0,0) -- (0,1.5) -- (1.5,1.5) -- (3.5,0) -- (1,-1) -- (-1,0);
 \draw[red,dashed] (0,0) -- (-0.5,2) -- (1.5,1.5) -- (1,0.5) -- (0,0);
 \draw[red] (0,1.5) node[circle, draw, fill=black!50,inner sep=0pt, minimum width=4pt] {};
 \draw[black] (0,1) node[left] {$\vecv_j$};
 \draw[black] (2.5,0) node[left] {$V$};
 \draw[black] (0.7,1.6) node[above] {$Q$};

 \fill[white] (-1.5,3) -- (-0.5,3.5) -- (-1.5,2.5) -- (-1.5,3);
 \draw[thick,black] (-1.5,3) -- (-0.5,3.5) -- (-1.5,2.5) -- (-1.5,3);
 \fill[white] (-1,-2) -- (0,-2) -- (0,-3) -- (-1,-3) -- (-1,-2);
 \draw[thick,black] (-1,-2) -- (0,-2) -- (0,-3) -- (-1,-3) -- (-1,-2);

  \path[thick,->] (6,1) edge [bend left] (7.5,1);
  \draw[black] (6.75,1.3) node[above] {$h$};

 \fill[blue!15] (8,-5) -- (16,0) -- (8,5) -- (8,-5);
 \draw[very thick,black] (8,-5) -- (16,0) -- (8,5) -- (8,-5);
 \fill[white] (9,0) -- (10,0) -- (11.5,1.5) -- (13.5,0) -- (11,-1) -- (9,0);
 \draw[thick,black] (9,0) -- (10,0) -- (11.5,1.5) -- (13.5,0) -- (11,-1) -- (9,0);
 \draw[red,dashed] (10,0) -- (9.5,2) -- (11.5,1.5) -- (11,0.5) -- (10,0);
 \draw[red] (10.75,0.75) node[circle, draw, fill=black!50,inner sep=0pt, minimum width=4pt] {};
 \draw[black] (10.75,0.85) node[left] {$h(\vecv_j)$};
 \draw[black] (13,0) node[left] {$h(V)$};
 \draw[black] (10.7,1.6) node[above] {$h(Q)$};

 \fill[white] (8.5,3) -- (9.5,3.5) -- (8.5,2.5) -- (8.5,3);
 \draw[thick,black] (8.5,3) -- (9.5,3.5) -- (8.5,2.5) -- (8.5,3);
 \fill[white] (9,-2) -- (10,-2) -- (10,-3) -- (9,-3) -- (9,-2);
 \draw[thick,black] (9,-2) -- (10,-2) -- (10,-3) -- (9,-3) -- (9,-2);

\end{tikzpicture}
\caption{Reducing the number of edges in a window.}\label{reduce-sides}
\end{center}
\end{figure}
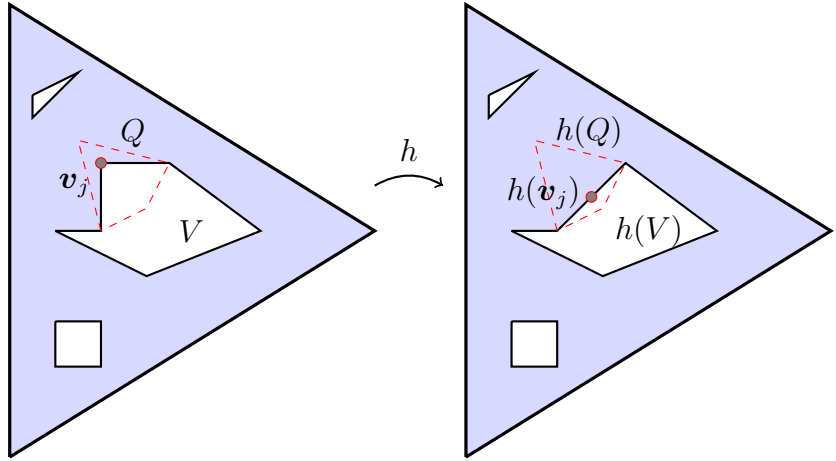

Suppose then that
  \[
   \sigma =P \setminus (W_1 \cup \dots \cup W_n).  \]
The image of $\sigma$ under any locally piecewise affine map is also a polygonal region of genus $n$ and as before, the isomorphism class of the corresponding $\AC$ function space is preserved under such maps.
By applying a finite sequence of locally piecewise affine maps as in the proof of Theorem~\ref{polygons} we may reduce the number of vertices in $P$ to 3. Note that the effect of these maps might be to increase the number of vertices in some of the windows. By applying a suitable affine map then, we see that $\AC(\sigma) \simeq \AC(\sigma')$ where
  \[ \sigma' =T \setminus (V_1 \cup \dots \cup V_n), \]
and where $V_1,\dots,V_n$ are windows in $T$.

The same algorithm can now be used to reduce the windows $V_1,\dots,V_n$ to triangles. Specifically, suppose that $V$ is a window in $T$ with at least $4$ vertices $\vecv_1,\dots,\vecv_k$. By the Two Ears Theorem we can choose an ear $\vecv_j$ in $V$. Since $\sigma'$ can be triangulated, the proof of Theorem~\ref{construct-Q} allows us to choose a convex quadrilateral $Q$ containing the triangular region $\vecv_{j-1}\, \vecv_j \vecv_{j+1}$ but not intersecting any of the other windows of $\sigma'$. Applying a suitable locally piecewise affine map $h$ which fixes the complement of $Q$ and maps $\vecv_j$ to the midpoint of $\ls[\vecv_{j-1},\vecv_{j+1}]$ we reduce the number of vertices in $V$ while leaving all the other windows unchanged. (See Figure~\ref{reduce-sides}.)

%

It just remains to prove that if
  \[ \sigma' = T \setminus (V_1 \cup \dots \cup V_n) \]
where each window is a triangle, then we can apply a finite sequence of locally piecewise affine maps to move the triangles $\{V_k\}$ to the corresponding triangles $\{T_k\}$ in the description of our standard set $\tau$. Our main tool is Lemma~\ref{TinR} which allows us to move a triangle anywhere within the interior of rectangular region while leaving everything outside the rectangle undisturbed. 
Although in concrete examples it is easy to efficiently move the triangles to their final position, for completeness we shall now give a general algorithm shows that this is always possible. 

Note that it follows from Lemma~\ref{triangle-move} that one may always move a vertex of a triangle to any point in the interior of that triangle, or, by applying two such moves, shrink any triangle towards one of its vertices.

We shall use the lexicographical ordering of points in the plane to choose the smallest vertex $(x_k,y_k)$ for each of the triangles $V_k$. 

The steps in the algorithm are as follows.
\begin{enumerate}
 \item Label the triangles so that $x_1 \le x_2 \le \dots \le x_n$.
 \item Starting from the right, shrink as many triangles (toward one vertex say) as is necessary to ensure that the $x$-coordinates of the smallest vertex of each of the triangles are distinct.
 \item Starting from the left, shrink each triangle towards its smallest vertex. If the triangles are shrunk to a sufficiently small size then the projections of these triangles onto the $x$-axis will form disjoint intervals $[a_k,b_k]$. Indeed, after sufficient shrinking the triangles will sit within the interiors of disjoint rectangles $R_k$ as in Figure~\ref{algorithm}. 
 \item Using Lemma~\ref{TinR} use a sequence of locally piecewise affine maps to move the $k$th triangle (within rectangle $R_k$ to one with vertices at $(a_k,0)$, $(b_k,0)$ and $(a_k,e_k)$ where $e_k$ is chosen small enough so that this triangle sits in $R_k$
 \item It remains to move the triangles to the correct positions to form the standard configuration $\tau$. Choose $\delta < \min(a_1,\frac{1}{3n})$. Starting from the left, move each triangle in turn (using Lemma~\ref{TinR}) so that it has vertices $(\frac{\delta(3k-1)}{3n},0)$, $(\frac{\delta(3k-2)}{3n},0)$ and $(\frac{\delta(2k-1)}{2n},\frac{1}{3n})$. 
 \item Now starting from the right, one can move the $k$-th triangle to the standard triangle $T_k$.
\end{enumerate}
%
Since we have only applied a finite sequence of locally piecewise affine maps, $\AC(\sigma') \simeq \AC(\tau)$, and this completes the proof.

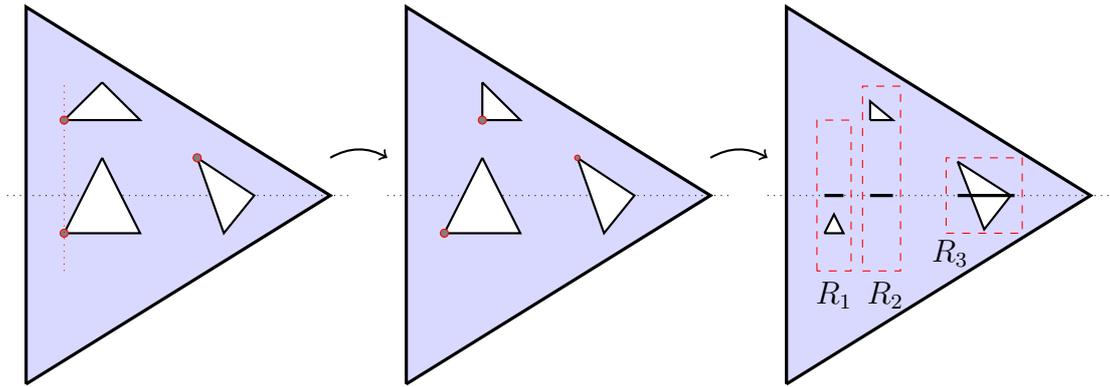
\begin{figure}[!ht]
\begin{center}
\begin{tikzpicture}[scale=0.5]
%
%
 \fill[blue!15] (-2,-5) -- (6,0) -- (-2,5) -- (-2,-5);
 \draw[very thick,black] (-2,-5) -- (6,0) -- (-2,5) -- (-2,-5);
 \draw[black,dotted] (-2.5,0) -- (6.5,0);

 \fill[white] (0,3) -- (-1,2) -- (1,2) -- (0,3);
 \draw[thick,black] (0,3) -- (-1,2) -- (1,2) -- (0,3);
 \draw[red] (-1,2) node[circle, draw, fill=black!50,inner sep=0pt, minimum width=3pt] {};
 \fill[white] (0,1) -- (-1,-1) -- (1,-1) -- (0,1);
 \draw[thick,black]  (0,1) -- (-1,-1) -- (1,-1) -- (0,1);
 \draw[red] (-1,-1) node[circle, draw, fill=black!50,inner sep=0pt, minimum width=3pt] {};
 \fill[white] (2.5,1) -- (4,0) -- (3.2,-1) -- (2.5,1);
 \draw[thick,black] (2.5,1) -- (4,0) -- (3.2,-1) -- (2.5,1);
 \draw[red] (2.5,1) node[circle, draw, fill=black!50,inner sep=0pt, minimum width=3pt] {};
 \draw[red,dotted] (-1,-2) -- (-1,3);

  \path[thick,->] (6,1) edge [bend left] (7.5,1);

 \fill[blue!15] (8,-5) -- (16,0) -- (8,5) -- (8,-5);
 \draw[very thick,black] (8,-5) -- (16,0) -- (8,5) -- (8,-5);
 \draw[black,dotted] (7.5,0) -- (16.5,0);

 \fill[white] (10,3) -- (10,2) -- (11,2) -- (10,3); 
 \draw[thick,black] (10,3) -- (10,2) -- (11,2) -- (10,3);
 \draw[red] (10,2) node[circle, draw, fill=black!50,inner sep=0pt, minimum width=3pt] {};
 \fill[white] (10,1) -- (9,-1) -- (11,-1) -- (10,1);
 \draw[thick,black] (10,1) -- (9,-1) -- (11,-1) -- (10,1); 
 \draw[red] (9,-1) node[circle, draw, fill=black!50,inner sep=0pt, minimum width=3pt] {};
 \fill[white] (12.5,1) -- (14,0) -- (13.2,-1) -- (12.5,1);
 \draw[thick,black] (12.5,1) -- (14,0) -- (13.2,-1) -- (12.5,1);
 \draw[red] (12.5,1) node[circle, draw, fill=black!50,inner sep=0pt, minimum width=2pt] {};

 \path[thick,->] (16,1) edge [bend left] (17.5,1);

 \fill[blue!15] (18,-5) -- (26,0) -- (18,5) -- (18,-5);
 \draw[very thick,black] (18,-5) -- (26,0) -- (18,5) -- (18,-5);
 \draw[black,dotted] (17.5,0) -- (26.5,0);

 \fill[white] (19,-1) -- (19.5,-1) -- (19.25,-0.5) -- (19,-1);
 \draw[thick,black] (19,-1) -- (19.5,-1) -- (19.25,-0.5) -- (19,-1);
 \draw[red,dashed] (18.8,-2) -- (18.8,2) -- (19.7,2) -- (19.7,-2) -- (18.8,-2);
 \draw[black] (19.25,-2) node[below]{$R_1$};
 \draw[black,very thick] (19,0) -- (19.5,0);

 \fill[white] (20.2,2) -- (20.8,2) -- (20.2,2.5) -- (20.2,2);
 \draw[thick,black] (20.2,2) -- (20.8,2) -- (20.2,2.5) -- (20.2,2);
 \draw[red,dashed] (20,-2) -- (21,-2) -- (21,2.9) -- (20,2.9) -- (20,-2);
 \draw[black] (20.6,-2) node[below]{$R_2$};
 \draw[black,very thick] (20.2,0) -- (20.8,0);

 \fill[white] (22.5,0.9) -- (23.9,0) -- (23.2,-0.9) -- (22.5,0.9);
 \draw[thick,black] (22.5,0.9) -- (23.9,0) -- (23.2,-0.9) -- (22.5,0.9);
 \draw[red,dashed] (22.2,-1) -- (24.2,-1) -- (24.2,1) -- (22.2,1) -- (22.2,-1);
  \draw[black] (22.3,-0.9) node[below]{$R_3$};
 \draw[black,very thick] (22.5,0) -- (24,0);

\end{tikzpicture}
\caption{Steps in the algorithm to map $\sigma'$ to $\tau$: (2)~making the smallest vertices distinct; (3)~shrinking the triangles so they have disjoint projections on the $x$-axis.}\label{algorithm}
\end{center}
\end{figure}

\end{proof}

\medskip\noindent
\textit{Acknowledgements.} The authors would like to thank Michael Cowling and Hanning Zhang for some helpful discussions regarding this work.

%
%
\bibliographystyle{amsalpha}

\end{document}